\newcommand{\upperRomannumeral}[1]{\uppercase\expandafter{\romannumeral#1}}
\theoremstyle{plain}
  \newtheorem{proposition}[]{Proposition}
  \newtheorem{lemma}[]{Lemma}
  \newtheorem{theorem}[]{Theorem}
  \newtheorem{corollary}[]{Corollary}
  \newtheorem{remark}[]{Remark}
  \newtheorem{definition}[]{Definition}
\title{Exponential decay for the near-critical scaling limit of the planar Ising model}
\author{Federico Camia}
\address{Division of Science, NYU Abu Dhabi, Saadiyat Island, Abu Dhabi, UAE \& Department of Mathematics, VU Amsterdam, De Boelelaan 1081a, 1081 HV Amsterdam, the Netherlands.}
\email{federico.camia@nyu.edu}
\author{Jianping Jiang}
\address{NYU-ECNU Institute of Mathematical Sciences at NYU Shanghai, 3663 Zhongshan
Road North, Shanghai 200062, China.}
\email{jjiang@nyu.edu}
\author{Charles M. Newman}
\address{Courant Institute of Mathematical Sciences, New York University,
251 Mercer st, New York, NY 10012, USA, \& NYU-ECNU Institute of Mathematical
Sciences at NYU Shanghai, 3663 Zhongshan Road North, Shanghai 200062, China.}
\email{newman@cims.nyu.edu}
\subjclass[2010]{Primary: 60K35, 82B20; Secondary: 82B27, 81T40}
\begin{document}
\begin{abstract}
We consider the Ising model at its critical temperature with external magnetic field $ha^{15/8}$ on the square lattice with lattice spacing $a$. We show that the truncated two-point function in this model decays exponentially with a rate independent of $a$. As a consequence, we show exponential decay in the near-critical scaling limit Euclidean magnetization field. For the lattice model with $a=1$, the mass (inverse correlation length) is of order $h^{8/15}$ as $h\downarrow 0$; for the Euclidean field, it equals exactly $Ch^{8/15}$ for some $C$. Although there has been much  progress in the study of critical scaling limits, results on near-critical models are far fewer due to the lack of conformal invariance away from the critical point.  Our arguments combine lattice and continuum FK representations, including coupled conformal loop and measure ensembles, showing that such ensembles can be useful even in the study of near-critical scaling limits. Thus we provide the first substantial application of measure ensembles.
\end{abstract}
\maketitle
\section{Introduction}
In this paper we obtain the first proof of exponential decay (or equivalently, a mass gap lower bound) for the important Euclidean field theory that is a near-critical scaling limit of the planar Ising model at the critical temperature, with an external magnetic field. A.B.~Zamolodchikov proposed \cite{Zam89,Zam89bis} a solution, directly in the scaling limit, in terms of scattering amplitudes for eight relativistic particles. Since the Ising model with an external magnetic field has never been solved on a lattice, Zamolodchikov's solution came as a major surprise (see \cite{Del04} for a recent review), and has not yet been put on firm mathematical ground, despite having striking implications for the Ising model and beyond (see \cite{Del04}). In relation to Zamolodchikov's scattering theory, our mass gap result basically shows the existence of at least one particle with strictly positive mass. As a corollary of our main results, we also provide a rigorous proof of the power-law behavior of the correlation length for the planar Ising model at the critical temperature, as the external magnetic field tends to zero. Key to our arguments is the use of conformal measure ensembles, introduced in \cite{CN09}, where they were called cluster area measures, and then constructed for percolation and the FK (Fortuin-Kasteleyn)-Ising model in \cite{CCK17}. The FK representation (see \cite{Gri06})
has been an invaluable tool in studies of the Ising model --- particularly for the critical two-dimensional scaling limit, where it is closely related to conformal loop ensembles \cite{She09,SW12}. Here we extend that approach to the near-critical case by means of a coupling between FK and Ising variables in the presence of an external field and by coupled measure and loop ensembles. An upper bound for the mass gap is obtained using methods quite different from those of the rest of the paper, namely transfer matrix techniques and reflection positivity. An upper bound using similar methods to the lower bound methods of this paper is in \cite{CJN19}.

\subsection{Overview}\label{secoverview}
The Ising model \cite{Isi25}, suggested by Lenz \cite{Len20} and cast in its current form by Peierls \cite{Pei36}, is one of the most studied models of statistical mechanics. Its two-dimensional
version has played a special role since Peierls'
proof of a phase transition \cite{Pei36}, and Onsager's calculation of the free
energy \cite{Ons44}. This phase transition has become a prototype for developing new techniques.
Its analysis has helped test a fundamental tenet of critical phenomena,
that near-critical physical systems are characterized by a \emph{correlation length}, which provides the natural length scale for the system, and diverges when the critical point is approached.

This divergence implies that the critical system itself has no characteristic length and
is therefore invariant under scale transformations. This in turn suggests that thermodynamic
functions at criticality are homogeneous, and predicts the appearance
of power laws. For a lattice-based model, it also means that, at or near criticality, it
should be possible to rescale the model appropriately and obtain a continuum scaling
limit by sending the lattice spacing to zero. This idea is at the heart of the renormalization group philosophy.

Thanks to the work of Polyakov \cite{Pol70} and others \cite{BPZ84a,BPZ84b}, it was understood that, once an appropriate continuum scaling limit is taken,
critical models should acquire conformal invariance. Because the conformal group is in general a
finite dimensional Lie group, the resulting constraints are limited in number; however, in two dimensions, since every analytic
function $f$ defines a conformal transformation, provided that $f^{\prime}$ is nonvanishing, the conformal group is infinite-dimensional.

Following this observation, in two dimensions, conformal methods were applied extensively to Ising and
Potts models, Brownian motion, the self-avoiding walk, percolation, and diffusion limited
aggregation. The large body of knowledge and techniques that resulted goes under the
name of Conformal Field Theory (CFT). The aspect of CFT most related to our work in this paper is a particular near-critical scaling limit of the two-dimensional Ising model believed to be related to the Lie algebra $E_8$ \cite{Zam89,Del04,BG11,MM12}, which we discuss in more detail below.

In recent years, significant developments in two-dimensional
critical phenomena have emerged in the mathematics literature. A major breakthrough was the introduction by Schramm \cite{Sch00} of the Schramm-Loewner Evolution (SLE) and its subsequent analysis and application to the
scaling limit problem for several models, most notably by Lawler, Schramm and Werner
\cite{LSW04}, and by Smirnov \cite{Smi01} (see also \cite{CN07}). The subsequent introduction of Conformal
Loop Ensembles (CLEs) \cite{CN04,CN06,She09,SW12,Wer03}, which are collections of SLE-type closed
curves, provided an additional tool to analyze the scaling limit geometry of critical models.
Substantial progress in the rigorous analysis of the two-dimensional Ising model at criticality
was made by Smirnov \cite{Smi10} with the introduction and scaling limit analysis of
\emph{fermionic observables}, also known as \emph{discrete holomorphic observables} or \emph{holomorphic fermions}.
These have proved extremely useful in studying the Ising model in finite geometries with boundary conditions and in establishing
conformal invariance of the scaling limit of various quantities, including the energy density
\cite{Hon10,HS13} and spin correlation functions \cite{CHI15}. (An independent derivation of critical
Ising correlation functions in the plane was obtained in \cite{Dub11}.)

In \cite{CGN15} (resp., \cite{CGN16}), it was shown that the critical Ising model (resp., near-critical model with external magnetic field $ha^{15/8}$) on the rescaled lattice $a\mathbb{Z}^2$ has a scaling limit $\Phi^0$ (resp., $\Phi^{h}$) as $a\downarrow 0$ --- denoted then by $\Phi^{\infty}$ (resp., $\Phi^{\infty, h}$). $\Phi^0$ satisfies the expected conformal covariance properties \cite{CGN15}. When $h\neq 0$, it was also expected (as stated in \cite{CGN15}) that the truncated correlations of the near-critical scaling limit would decay exponentially. In this paper, we give a proof of that statement and we rigorously verify that the critical exponent for how the correlation length diverges as $h\downarrow 0$ is $8/15$, together with the related scaling properties of $\Phi^h$.

$\Phi^h$ is a (generalized) random field on $\mathbb{R}^2$ --- i.e., for suitable test functions $f$ on $\mathbb{R}^2$, there are random variables $\Phi^h(f)$, formally written as $\int_{\mathbb{R}^2}\Phi^h(x)f(x)dx$. Euclidean random fields such as $\Phi^h$ on the Euclidean ``space-time'' $\mathbb{R}^d:=\{x=(x_0,w_1,\ldots,w_{d-1})\}$ (in our case $d=2$) are related to quantum fields on relativistic space time, $\{(t,w_1,\ldots,w_{d-1})\}$, essentially by replacing $x_0$ with a complex variable and analytically continuing from the purely real $x_0$ to a pure imaginary $(-it)$ --- see \cite{OS73}, Chapter 3 of \cite{GJ87} and \cite{MM97} for background. One major reason for interest in $\Phi^h$ is that the associated quantum field is predicted \cite{Zam89,Zam89bis} to have remarkable properties including relations between the masses of particles described by the quantum field and the Lie algebra $E_8$ --- see \cite{Del04,BG11,MM12}. A natural first step in analyzing particle masses is to prove a strictly positive lower bound $m(h)$ on all masses (i.e., a mass gap) which exactly corresponds (see \cite{Sim73,Spe74} and Chapters \upperRomannumeral{7} and \upperRomannumeral{11} of \cite{GJ85}) to the type of exponential decay we prove in this paper --- i.e., showing (as a consequence of Theorem \ref{thmmain} below) that for test functions $f,g\geq 0$ of compact support, and some $C=C(f,g)<\infty$,
\[0\leq\text{Cov}\left(\Phi^h(f),\Phi^h(T^ug)\right)\leq C(f,g)e^{-m(h)u} \text{ for } u\geq 0,\]
where $(T^ug)(x_0,w_1)=g(x_0-u,w_1)$.

$\Phi^h$ is the limit, as the lattice spacing $a\downarrow 0$, of the lattice field
\begin{equation}\label{eqPhi}
\Phi^{a,h}:=a^{15/8}\sum_{x\in a\mathbb{Z}^2}\sigma_x\delta_x,
\end{equation}
where $\{\sigma_x\}_{x\in a\mathbb{Z}^2}$ are the $\pm1$-valued spin variables in the standard planar Ising model (on $a\mathbb{Z}^2$) at the critical (inverse) temperature $\beta=\beta_c$ with magnetic field $H=a^{15/8}h$ and $\delta_x$ is a unit Dirac point measure at $x$. Hence, obtaining an exponential decay result for $\Phi^{a,h}$ is directly related to corresponding results for $\{\sigma_x\}$ on the lattice, which we discuss next. But first we note that the choice of scaling factor $a^{15/8}$ in \eqref{eqPhi} relies on Wu's celebrated result (see \cite{Wu66} and \cite{MW73}) that the critical Ising two-point function decays precisely as $C^{\prime}|x-y|^{-1/4}$ for some $C^{\prime}$ (where $|x-y|:=\|x-y\|_2$, the Euclidean distance).

It was first proved in \cite{LP68} that the lattice truncated two-point function with $H>0$ decays exponentially. See also \cite{FR15} for a different and simpler proof, where it was also shown that the decay rate $\tilde{m}(H)$ (or inverse correlation length) on $\mathbb{Z}^2$ is bounded below linearly in $H$. In this paper, we show exponential decay for the near-critical Ising model on $a\mathbb{Z}^2$ with $H=a^{15/8}h$. Roughly speaking, this means (see Theorem \ref{thm1} below) that there is a lower bound on $\tilde{m}(H)$ behaving like $H^{8/15}$ as $H\downarrow0$.

Good lower bounds as $a\downarrow 0$ for fixed $h$ or as $H\downarrow 0$ for fixed $a$ seem essential in order to obtain an exponential decay rate for the continuum field $\Phi^h$ for any particular value, say $h_0$, of the renormalized field strength $h$. It is worth noting that in the earlier work of \cite{LP68,FR15} on lattices, exponential decay was first obtained for large $H$ (by expansion techniques) and then shown to apply to all $H>0$, albeit with a sub-optimal lower bound on $\tilde{m}(H)$ as $H\downarrow 0$. However, in the continuum setting, exponential decay (i.e., $m(h)>0$) for any single value $h_0\neq 0$ of $h$ immediately implies exponential decay for all $h\neq 0$ with the correct dependence of $m(h)$ on $h$. This follows from simple scaling properties of $\Phi^h$ as we now explain.

Both the $h=0$ and $h>0$ fields $\Phi^0$ and $\Phi^h$ can be defined on a bounded (simply-connected) domain in $\mathbb{R}^2$ (now thought of as the complex plane $\mathbb{C}$) with appropriate boundary condition (e.g., free or plus) as well as on the full plane. Conformal mapping properties for $\Phi^0$ were given in Theorem 1.8 of \cite{CGN15}. Similar properties for $\Phi^h$ are only implicit in \cite{CGN16} so we state them explicitly below as Theorem \ref{thmcon} in Section \ref{secscl}. In the case of the full plane one can consider (for $h=0$ and $h>0$) the conformal mapping, $x\rightarrow \lambda x$, with $\lambda>0$, by defining $\Phi^h_{\lambda}(x)=\Phi^h(\lambda x)$, by which we mean $\Phi^h_{\lambda}(f)=\lambda^{-2}\Phi^h(f_{\lambda^{-1}})$ with $f_{\lambda^{-1}}(x)=f(\lambda^{-1}x)$. Indeed, the fields $\Phi^h$ are not defined pointwise, but it is sometimes convenient to treat them, with an abuse of notation, as if they were. By doing so, one can write that $\lambda^{1/8}\Phi^{h_0}(\lambda x)$ is equal in distribution to $\Phi^{\lambda^{15/8}h_0}(x)$ for any $\lambda>0$ and real $h_0$. Thus a positive exponential decay rate $m(h_0)>0$, for a single $h_0>0$, implies the same for all $h\neq 0$ with $m(h)=\left(m(h_0)/h_0^{8/15}\right)h^{8/15}$.

Exponential upper bounds of the form $Ce^{-m(h)|x-y|}$ for the truncated two-point function $\langle \sigma_x;\sigma_y\rangle_{a,h}:=\text{Cov}_{a,h}(\sigma_x,\sigma_y)$ on $a\mathbb{Z}^2$ for small $a$ or for the corresponding continuum $G^h(x-y):=E\left(\Phi^{h}(x)\Phi^{h}(y)\right)-E\left(\Phi^{h}(x)\right)E\left(\Phi^{h}(y)\right)$ on $\mathbb{R}^2$ (where $E\left(\Phi^{h}(x)\Phi^{h}(y)\right)-E\left(\Phi^{h}(x)\right)E\left(\Phi^{h}(y)\right)$ may be obtained as the scaling limit of the corresponding quantity on the lattice) cannot be valid for small $|x-y|$ since when $h=0$, $G^0(x-y)=C|x-y|^{-1/4}$. Indeed, one expects exponential decay only for $|x-y|$ larger than the correlation length and otherwise $G^0(x-y)$ behavior. Since the GHS inequality \cite{GHS70} implies $G^0(x-y)\geq G^h(x-y)$ for all $x,y$, one can paste together exponential upper bounds for large $|x-y|$ with the $h=0$ upper bounds for small $|x-y|$ to obtain an upper bound of the form $C^{\prime}|x-y|^{-1/4}e^{-m^{\prime}(h)|x-y|}$ for all $|x-y|$, as we do in Theorems \ref{thm1} and \ref{thmmain} below.

The analysis of Theorem \ref{thm1} is done in Section \ref{seclargeh} after reviewing in Section \ref{secpre} the FK random cluster representation for the Ising model and discussing couplings of FK and Ising variables relevant when $h>0$.  The heart of that analysis consists of the first five lemmas in that section, which concern circuits of vertices in an annulus created by ``necklaces'' of touching FK-open clusters containing sufficiently many vertices. For large $h$, with high probability, a necklace and its circuit will have all $+1$ spin values; this will also be true for small $h$ by changing the scale of the boxes used in the argument. Correlations will then only occur between regions of $a\mathbb{Z}^2$ that are connected within the complement of a strongly supercritical infinite percolation cluster. The proof relies on continuum results concerning coupled conformal loop and measure ensembles, denoted CLE and CME respectively. Indeed, a main contribution of this paper is a demonstration of the utility of such coupled loop and measure ensembles. Relevant CLE$_{\kappa}$ results are in \cite{She09}, \cite{SW12}, \cite{MSW16}. CME$_{\kappa}$ and its coupling to CLE$_{\kappa}$ was proposed in \cite{CN09} and carried out in \cite{CCK17} for $\kappa=6$ and $16/3$. It may be worth noting, as was mentioned in \cite{CN09}, that, in addition to their utility for near-critical models, measure ensembles may be more extendable than loop ensembles to scaling limits in dimensions $d>2$, but that issue goes well beyond
the scope of this paper. In Section \ref{seccon}  the continuum field $\Phi^h$ is studied, including conformal mapping properties. In Appendix \upperRomannumeral{1}, we state some of the key ingredients used as building blocks for our results.

In Appendix \upperRomannumeral{2}, we give a proof of Theorem \ref{thmupper} using reflection-positivity methods. This provides an upper bound for the mass gap (the inverse correlation length) matching the lower bound of Corollary \ref{cor1}. At the time the first version of this paper was written and posted (July, 2017), there was no proof of an upper bound based on FK methods; that changed a bit later with the proof presented in \cite{CJN19}. Accordingly, we have now placed the original, and  much shorter, proof based on refection positivity in Appendix \upperRomannumeral{2}.

\subsection{Main results}

Let $a>0$. Denote by $P_h^a$ the infinite volume Ising measure at the inverse critical temperature $\beta_c$ on $a\mathbb{Z}^2$ with external field $a^{15/8}h>0$. The precise value of $\beta_c$, $\log(1+\sqrt{2})/2$, originates in \cite{KW41,Ons44}. Let $\langle\cdot\rangle_{a,h}$ be the expectation with respect to $P_h^a$. Let $\langle \sigma_x;\sigma_y\rangle_{a,h}$ be the truncated two-point function, i.e.,
\[\langle \sigma_x;\sigma_y\rangle_{a,h}:=\langle \sigma_x\sigma_y\rangle_{a,h}-\langle\sigma_x\rangle_{a,h}\langle\sigma_y\rangle_{a,h}.\]
Our main result about the truncated two-point function is:
\begin{theorem}\label{thm1}
There exist $B_0, C_0\in(0,\infty)$ such that for any $a\in(0,1]$ and $h\in(0, a^{-15/8}]$
\begin{equation}\label{eqdis1}
0\leq \langle \sigma_x;\sigma_y\rangle_{a,h}\leq C_0a^{1/4}|x-y|^{-1/4}e^{-B_0h^{8/15}|x-y|}\text{ for any }x,y\in a\mathbb{Z}^2.
\end{equation}
 In particular, for a=1 and any $H\in(0,1]$, we have
\begin{equation}\label{eqdis2}
0\leq \langle \sigma_{x^{\prime}};\sigma_{y^{\prime}}\rangle_{1,H}\leq C_0 |x^{\prime}-y^{\prime}|^{-1/4} e^{-B_0H^{8/15}|x^{\prime}-y^{\prime}|} \text{ for any }x^{\prime},y^{\prime}\in \mathbb{Z}^2.
\end{equation}
\end{theorem}

\begin{remark}
By the GHS inequality \cite{GHS70}, $\langle \sigma_x;\sigma_y\rangle_{a,h}$ is decreasing in $h$ for fixed $a, x, y$. Thus  \eqref{eqdis1} implies that for any $a\in(0,1]$ and $h>a^{-15/8}$
\begin{equation*}
0\leq \langle \sigma_x;\sigma_y\rangle_{a,h}\leq C_0a^{1/4}|x-y|^{-1/4}e^{-B_0a^{-1}|x-y|}\text{ for any }x,y\in a\mathbb{Z}^2.
\end{equation*}
\end{remark}

For $a=1$, define the (lattice) mass (or inverse correlation length) $\tilde{M}(H)$ as the supremum of all $\tilde{m}>0$ such that for some $C_{\tilde{m}}<\infty$,
\begin{equation}\label{equpper1}
\langle \sigma_{x^{\prime}};\sigma_{y^{\prime}}\rangle_{1,H}\leq C_{\tilde{m}}e^{-\tilde{m}|x^{\prime}-y^{\prime}|}\text{ for any }x^{\prime},y^{\prime}\in \mathbb{Z}^2.
\end{equation}
The following immediate corollary of Theorem \ref{thm1} gives a one-sided bound for the behavior of $\tilde{M}(H)$ as $H\downarrow 0$, with the expected critical exponent $8/15$.
\begin{corollary}\label{cor1}
\[\tilde{M}(H)\geq B_0 H^{8/15} \text{ as }H\downarrow0.\]
\end{corollary}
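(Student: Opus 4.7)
The plan is to derive the corollary as an immediate consequence of \eqref{eqdis2} in Theorem \ref{thm1}. By the definition of $\tilde{M}(H)$ in \eqref{equpper1}, to establish $\tilde{M}(H)\geq B_0H^{8/15}$ it suffices to exhibit a finite constant $C_{\tilde{m}}$ for which the pure exponential estimate holds with $\tilde{m}:=B_0H^{8/15}$. So my first step is to read off from Theorem \ref{thm1}, valid for $H\leq 1$ and hence for the entire $H\downarrow 0$ regime, the bound
\[
\langle \sigma_{x'};\sigma_{y'}\rangle_{1,H}\leq C_0\,|x'-y'|^{-1/4}\,e^{-B_0H^{8/15}|x'-y'|},
\]
obtained by combining $C(H)\leq C_0$ with the monotonicity $\hat{M}(H)\geq B_0H^{8/15}$ of the exponential.

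The remaining step is to absorb the polynomial prefactor $|x'-y'|^{-1/4}$, and this is where the discreteness of the lattice does the work for free. For any pair of distinct lattice points $x',y'\in\mathbb{Z}^2$ one has $|x'-y'|\geq 1$, hence $|x'-y'|^{-1/4}\leq 1$, so the display above simplifies to $\langle \sigma_{x'};\sigma_{y'}\rangle_{1,H}\leq C_0\,e^{-B_0H^{8/15}|x'-y'|}$. The coincident case $x'=y'$ is handled separately using $\langle \sigma_{x'};\sigma_{x'}\rangle_{1,H}=1-\langle\sigma_{x'}\rangle_{1,H}^2\leq 1$, which is trivially bounded by $C_{\tilde{m}}\,e^0$ as soon as $C_{\tilde{m}}\geq 1$. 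Taking $C_{\tilde{m}}:=\max(C_0,1)$ therefore makes \eqref{equpper1} valid for all pairs in $\mathbb{Z}^2$ with the exponent $\tilde{m}=B_0H^{8/15}$, which is exactly the claim.

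There is essentially no obstacle: the genuine content lies entirely in Theorem \ref{thm1}, and the corollary just strips a bounded polynomial factor. The only point worth remarking is that the $|x'-y'|^{-1/4}$ factor in \eqref{eqdis2} --- inherited from Wu's critical two-point asymptotics at $h=0$ --- cannot be removed in the continuum field $\Phi^h$, but causes no difficulty on $\mathbb{Z}^2$ precisely because distinct lattice sites are uniformly separated.
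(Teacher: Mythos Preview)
Your proposal is correct and matches the paper's approach: the paper simply calls Corollary~\ref{cor1} ``an immediate corollary of Theorem~\ref{thm1}'' without giving any further argument, and your derivation---applying \eqref{eqdis2} with $C(H)\leq C_0$, $\hat{M}(H)\geq B_0H^{8/15}$, and absorbing the $|x'-y'|^{-1/4}$ factor via the lattice separation $|x'-y'|\geq 1$---is exactly the intended reading of ``immediate.''
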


%\begin{remark}
%A complementary upper bound,
%\begin{equation}\label{eqmassup}
%\tilde{M}(H)\leq B_1 H^{8/15} \text{ as }H\downarrow 0,
%\end{equation}
%for some $B_1\in (0,\infty)$, can be attained. Since the methods to derive \ref{eqmassup} use techniques beyond those used in this paper, we %defer a presentation to elsewhere.
%\end{remark}

Let $\Phi^{a,h}$ be the near-critical magnetization field in the plane defined by
\begin{equation}\label{eqfield}
\Phi^{a,h}:=a^{15/8}\sum_{x\in a\mathbb{Z}^2}\sigma_x\delta_x,
\end{equation}
where $\{\sigma_x\}_{x\in a\mathbb{Z}^2}$ is a configuration for the measure $P_h^a$. In Theorem 1.4 of \cite{CGN16}, it was proved that $\Phi^{a,h}$ converges in law to a continuum (generalized) random field $\Phi^{h}$. Let $C_0^{\infty}(\mathbb{R}^2)$ denote the set of infinitely differentiable functions with compact support. $\Phi^{h}(f)$ denotes the field $\Phi^{h}$ paired against the test function $f$ (which was denoted $\langle\Phi^{h},f\rangle$ in \cite{CGN16}).
\begin{theorem}\label{thmmain}
For any $f, g\in C_0^{\infty}(\mathbb{R}^2)$, we have
\begin{align*}
\left|~\emph{Cov}\left(\Phi^{h}(f),\Phi^{h}(g)\right) \right|\leq C_0\int\int_{\mathbb{R}^2 \times \mathbb{R}^2}|f(x)||g(y)||x-y|^{-1/4} e^{-B_0h^{8/15}|x-y|}dxdy,
\end{align*}
where $C_0$ and $B_0$ are as in Theorem \ref{thm1}.
\end{theorem}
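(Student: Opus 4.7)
The plan is to deduce Theorem \ref{thmmain} from the lattice bound of Theorem \ref{thm1} by passing to the scaling limit $a\downarrow 0$. Using the definition \eqref{eqfield} of $\Phi^{a,h}$ and bilinearity of covariance, one has
\[\text{Cov}\bigl(\Phi^{a,h}(f),\Phi^{a,h}(g)\bigr) = a^{15/4}\sum_{x,y\in a\mathbb{Z}^2} f(x)g(y)\,\langle \sigma_x;\sigma_y\rangle_{a,h}.\]
The nonnegativity $\langle\sigma_x;\sigma_y\rangle_{a,h}\ge 0$ from Theorem \ref{thm1} lets me absorb the signs of $f,g$ into absolute values, while the upper bound of Theorem \ref{thm1} together with the trivial estimate $|\langle\sigma_x;\sigma_x\rangle_{a,h}|\le 1$ on the $O(a^{-2})$ diagonal terms in the support of $fg$ (which contribute $O(a^{15/4-2})=o(1)$) yields
\[\bigl|\text{Cov}(\Phi^{a,h}(f),\Phi^{a,h}(g))\bigr|\le c(h)\, a^4\sum_{\substack{x,y\in a\mathbb{Z}^2 \\ x\neq y}} |f(x)||g(y)|\,|x-y|^{-1/4} e^{-m(h)|x-y|} + o(1).\]

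The right-hand side is a Riemann sum. Since $f,g\in C_0^\infty(\mathbb{R}^2)$ and the kernel $|x-y|^{-1/4}e^{-m(h)|x-y|}$ is locally integrable on $\mathbb{R}^2\times\mathbb{R}^2$ (the diagonal singularity of order $-1/4$ is well above the threshold set by the codimension-$2$ diagonal, as one sees in polar coordinates $\int_0^\delta r^{-1/4}\cdot r\,dr <\infty$), a standard $\epsilon$-$\delta$ argument splitting into the region $|x-y|\ge\delta$ (where the integrand is continuous, so Riemann sums converge) and $|x-y|<\delta$ (where both the Riemann sum and the integral are controlled uniformly in $a$ by $O(\delta^{7/4})$) gives, as $a\downarrow 0$, convergence of the right-hand side to $c(h)\iint_{\mathbb{R}^2\times\mathbb{R}^2} |f(x)||g(y)|\,|x-y|^{-1/4} e^{-m(h)|x-y|}\,dx\,dy$, which is the desired upper bound.

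What remains, and is the main obstacle, is to show $\text{Cov}(\Phi^{a,h}(f),\Phi^{a,h}(g))\to\text{Cov}(\Phi^h(f),\Phi^h(g))$ as $a\downarrow 0$. By Theorem 1.4 of \cite{CGN16}, $(\Phi^{a,h}(f),\Phi^{a,h}(g))\to(\Phi^h(f),\Phi^h(g))$ in distribution, but weak convergence alone does not force convergence of second moments. To close this gap, I would apply the estimate above with $g=f$ (and similarly for $g$) to obtain uniform $L^2$ bounds $\sup_{a\in(0,1]}\text{Var}(\Phi^{a,h}(f))<\infty$ and $\sup_{a\in(0,1]}\text{Var}(\Phi^{a,h}(g))<\infty$, then upgrade to uniform $L^{2+\epsilon}$ control---which gives uniform integrability of the product $\Phi^{a,h}(f)\Phi^{a,h}(g)$---either by invoking the higher $n$-point function convergence already established in \cite{CGN16} or by applying the same Riemann-sum argument to suitable higher-point lattice estimates derived along the lines of Theorem \ref{thm1}. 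Combining uniform integrability with convergence in distribution then yields convergence of $E[\Phi^{a,h}(f)\Phi^{a,h}(g)]$ and of $E[\Phi^{a,h}(f)]$, $E[\Phi^{a,h}(g)]$ to the corresponding continuum moments, completing the proof.
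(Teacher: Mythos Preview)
Your proposal is correct and follows essentially the same route as the paper: express the lattice covariance as a double sum, bound it term-by-term via Theorem~\ref{thm1}, recognize the result as a Riemann sum for the desired integral, and pass to the limit using the convergence in distribution from \cite{CGN16} together with moment bounds to justify convergence of covariances. The paper handles the last point in one sentence by citing ``moment bounds that follow from arguments like those used for Proposition~3.5 of \cite{CGN15}''; your discussion of uniform integrability via $L^{2+\epsilon}$ control is a more explicit version of the same idea, and your treatment of the diagonal terms and the Riemann-sum convergence is more careful than the paper's (which leaves these implicit).
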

\begin{remark}
Theorem \ref{thmmain} may be expressed as
\[E\left(\Phi^{h}(x)\Phi^{h}(y)\right)-E\left(\Phi^{h}(x)\right)E\left(\Phi^{h}(y)\right)\leq C_0|x-y|^{-1/4} e^{-B_0h^{8/15}|x-y|}.\]
\end{remark}
For $\Phi^h$, define the mass $M(\Phi^h)$ as the supremum of all $\tilde{m}>0$ such that for all $f,g\in C_0^{\infty}(\mathbb{R}^2)$ and some $C_{\tilde{m}}(f,g)<\infty$,
\begin{equation}\label{equpper2}
\left|~\text{Cov}\left(\Phi^h(f),\Phi^h(T^ug)\right) \right|\leq C_{\tilde{m}}(f,g) e^{-\tilde{m}u} \text{ for } u\geq 0.
\end{equation}
The following corollary is essentially a consequence of Theorem \ref{thmmain} and the scaling properties of $\Phi^h$; to show that $C<\infty$, we use \eqref{equpper3.94} from Appendix \upperRomannumeral{2}. The scaling properties were discussed in Subsection \ref{secoverview} and are presented with more detail in Subsection \ref{secscl}; the proof of Corollary \ref{cor:mass}, including that $C<\infty$, is given in Subsection \ref{seccor2}.
\begin{corollary} \label{cor:mass}
\[M(\Phi^h)=C h^{8/15} \text{ for some } C\in (0,\infty) \text{ and all }h.\]
\end{corollary}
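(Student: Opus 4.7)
The plan is to combine the exponential decay of Theorem \ref{thmmain}, which already yields $M(\Phi^h)>0$ at every $h>0$, with the scaling identity
\[
\lambda^{1/8}\,\Phi^{h_0}(\lambda x) \stackrel{d}{=} \Phi^{\lambda^{15/8} h_0}(x), \qquad \lambda>0,
\]
recalled in Subsection \ref{secoverview}, to propagate the decay rate to every $h$ with the correct power. First, given $f,g\in C_0^{\infty}(\mathbb{R}^2)$ with supports contained in the ball of radius $R$ about the origin and $u\geq 2R+1$, one has $|x-y|\geq u-2R$ on $\mathrm{supp}(f)\times\mathrm{supp}(T^u g)$, so Theorem \ref{thmmain} yields
\[
|\mathrm{Cov}(\Phi^h(f),\Phi^h(T^u g))| \leq c(h)\|f\|_1\|g\|_1\,(u-2R)^{-1/4} e^{-m(h)(u-2R)},
\]
which verifies \eqref{equpper2} at every $\tilde m<m(h)$; hence $M(\Phi^h)\geq m(h)>0$.

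Next I would translate the scaling identity into a transformation law for pairings. Changing variables $y=\lambda x$ in the formal pairing of $\Phi^{\lambda^{15/8}h_0}$ against a test function gives $\Phi^{\lambda^{15/8} h_0}(f) \stackrel{d}{=} \lambda^{-15/8}\Phi^{h_0}(f_{1/\lambda})$ with $f_{1/\lambda}(y):=f(y/\lambda)$, and a direct computation shows $(T^u g)_{1/\lambda}=T^{\lambda u} g_{1/\lambda}$. Therefore
\[
\bigl|\mathrm{Cov}\bigl(\Phi^{\lambda^{15/8} h_0}(f),\Phi^{\lambda^{15/8} h_0}(T^u g)\bigr)\bigr| = \lambda^{-15/4}\bigl|\mathrm{Cov}\bigl(\Phi^{h_0}(f_{1/\lambda}),\Phi^{h_0}(T^{\lambda u} g_{1/\lambda})\bigr)\bigr|.
\]
Since $f_{1/\lambda},g_{1/\lambda}\in C_0^{\infty}(\mathbb{R}^2)$, decay of the right-hand side in $v:=\lambda u$ at rate $\tilde m$ corresponds to decay of the left-hand side in $u$ at rate $\tilde m\lambda$, and conversely; taking suprema on each side gives $M(\Phi^{\lambda^{15/8} h_0})=\lambda\,M(\Phi^{h_0})$. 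Writing $h=\lambda^{15/8}h_0$, equivalently $\lambda=(h/h_0)^{8/15}$, this says that $M(\Phi^h)/h^{8/15}$ is independent of $h>0$; call its value $C$. Positivity $C>0$ follows from the first step applied at any convenient $h_0$.

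I do not anticipate a substantial obstacle: the lower bound is immediate from Theorem \ref{thmmain} on compactly supported test functions, and the scaling step is routine book-keeping from the distributional identity of the overview. The only point not automatic from the excerpt is the finiteness $C<\infty$, i.e.\ an upper bound on $M(\Phi^h)$ at some single $h$; this is exactly what is supplied (for the lattice) by Theorem \ref{thmupper} of Section \ref{secupper}, and is transferred to the continuum through the convergence in law of $\Phi^{a,h}$ to $\Phi^h$ from \cite{CGN16}.
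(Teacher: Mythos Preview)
Your argument is correct and matches the paper's approach exactly: the paper states the corollary as a direct consequence of Theorem~\ref{thmmain} (giving $M(\Phi^h)>0$) together with the scaling relation $\lambda^{1/8}\Phi^{h_0}(\lambda x)\stackrel{d}{=}\Phi^{\lambda^{15/8}h_0}(x)$ from Subsection~\ref{secoverview} and Section~\ref{secscl}, and you have carefully spelled out the bookkeeping that the paper leaves implicit.

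One small correction concerning your final remark on finiteness: the direction of the argument in Section~\ref{secupper} is the reverse of what you sketch. The paper does not deduce $C<\infty$ by transferring the lattice statement of Theorem~\ref{thmupper} to the continuum via convergence in law. Rather, it argues directly in the continuum: if $M(\Phi^h)=\infty$ then spatially separated pairings of $\Phi^h$ would be uncorrelated, which by an FKG argument \cite{New80} would force $\Phi^h$ to be Gaussian, contradicting the non-Gaussianity established in \cite{CGN16}. The finiteness of $C$ is thus obtained inside the proof of Theorem~\ref{thmupper}, and the lattice upper bound on $\tilde M(H)$ is then deduced from it (via the convergence $\Phi^{a_i,h}\to\Phi^h$), not the other way around. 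This does not affect your proof of the corollary itself, since the corollary as stated by the paper is claimed only as a consequence of Theorem~\ref{thmmain} and scaling.
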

\begin{remark}
Theorem \ref{thmmain} implies (see the remarks after Theorem 2.1 of \cite{Spe74} and Theorem 6 of \cite{Sim73} as well as Chapters \upperRomannumeral{7} and \upperRomannumeral{11} of \cite{GJ85}) the existence of a mass gap in the spectrum of the Hamiltonian of the quantum field theory determined by the Euclidean field $\Phi^{h}$.
\end{remark}

Our final theorem gives a complementary bound to Corollary \ref{cor1} --- i.e., $\tilde{M}(H)\leq C H^{8/15}$ as $H\downarrow 0$. The proof is given in Appendix \upperRomannumeral{2} and is based on reflection positivity. A different proof using FK methods can be found in \cite{CJN19}.
\begin{theorem}\label{thmupper}
\[\limsup_{H\downarrow 0} \tilde{M}(H)/H^{8/15}\leq C \in (0,\infty)\]
with $C$ the same constant as in Corollary \ref{cor:mass}.
\end{theorem}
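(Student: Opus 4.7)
The plan is to combine reflection positivity for the lattice Ising model with the convergence $\Phi^{a,h}\Rightarrow\Phi^h$ of \cite{CGN16} to transfer the continuum mass identity $M(\Phi^h)=Ch^{8/15}$ of Corollary~\ref{cor:mass} into the stated bound on $\tilde M(H)$. Reflection positivity of the critical $\mathbb{Z}^2$-Ising measure with field $H>0$ through a horizontal half-integer line gives, via Osterwalder-Schrader reconstruction, a Hilbert space $\mathcal{H}_H$ with vacuum $\Omega_H$ and a positive self-adjoint transfer operator $T_H$ of top eigenvalue $\lambda_0(H)$. Spectral calculus for $T_H$, combined with the definition of $\tilde M(H)$, yields the variational bound
\[
\langle\psi,T_H^k\psi\rangle \leq \lambda_0(H)^k\,e^{-k\tilde M(H)}\|\psi\|^2 \quad\text{for all }\psi\in\Omega_H^\perp,\;k\geq 1.
\]
The scaling limit $\Phi^h$ inherits reflection positivity in the limit, producing an analogous continuum Hilbert space $\mathcal{H}_\infty^h$, vacuum $\Omega_\infty^h$, and semigroup $e^{-uH_Q^\infty}$ whose spectral gap equals $M(\Phi^h)=Ch^{8/15}$.

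For the core step, fix $h=1$ so that $H=a^{15/8}$. For any $\epsilon>0$, density of smeared time-zero states in $\mathcal{H}_\infty^1$ together with the variational characterization of the continuum gap let me choose a non-negative $f\in C_0^\infty(\mathbb{R}^2)$ supported in a thin positive-time slab $\{0<x_0<\delta\}$ with
\[
\mathrm{Cov}(\Phi^1(\theta f),\Phi^1(T^1 f)) \geq e^{-(C+\epsilon)}\,\mathrm{Cov}(\Phi^1(\theta f),\Phi^1(f)),
\]
where $\theta$ is reflection across $\{x_0=0\}$ and $T^u$ is Euclidean time-translation. I would then upgrade the weak convergence $\Phi^{a,1}\Rightarrow\Phi^1$ to convergence of these smeared second moments (uniform integrability coming from the universal prefactor bound $c(h)\leq C_0$ of Theorem~\ref{thm1}) so that, for all $a$ small enough, the same inequality holds with $\Phi^{a,1}$ in place of $\Phi^1$ and $C+\epsilon$ replaced by $C+2\epsilon$.

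Identifying one Euclidean time unit with $k=\lfloor 1/a\rfloor$ lattice steps of $T_H$ and setting $\psi_f^a=(\Phi^{a,1}(f)-\langle\Phi^{a,1}(f)\rangle)\Omega_H\in\mathcal{H}_H$, the lattice-side ratio equals $\langle\psi_f^a,T_H^k\psi_f^a\rangle/(\lambda_0^k\|\psi_f^a\|^2)$ up to $o(1)$ errors from $ka$ differing from $1$ by at most $a$. The lattice variational bound then forces
\[
e^{-(C+2\epsilon)} \leq e^{-k\tilde M(H)}\sim e^{-\tilde M(H)/a},
\]
hence $\tilde M(H)/a\leq C+3\epsilon$, and since $a=H^{8/15}$, $\tilde M(H)/H^{8/15}\leq C+3\epsilon$ for $H$ small enough. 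Sending $\epsilon\downarrow 0$ finishes the proof.

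The hardest part will be the second paragraph: upgrading the weak convergence of \cite{CGN16} to convergence of the specific smeared second moments entering the ratio, including those built from the reflection $\theta f$, with enough control to identify the lattice Osterwalder-Schrader ratio with its continuum counterpart modulo $O(a)$ corrections. Uniform integrability via Theorem~\ref{thm1} should yield the convergence of the second moments themselves, but extra care is required for the partition-function normalization $\lambda_0^k$ in the identification and for handling the rounding $k=\lfloor 1/a\rfloor$.
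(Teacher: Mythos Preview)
Your approach rests on the same two ingredients as the paper's---lattice reflection positivity (yielding a self-adjoint transfer operator) and convergence of smeared second moments as $a\downarrow 0$---but runs the argument in the opposite direction. The paper never builds the continuum OS Hilbert space or searches for a witness $f$: it pushes the lattice spectral inequality \emph{forward} to the continuum. From reflection positivity and the definition of $\tilde M(H)$ (invoking \cite{LP68} to pass from two-point functions to general local observables) it obtains
\[
\mathrm{Cov}\bigl(\Phi^{a,h}(f),\Phi^{a,h}(g)\bigr)\le\sqrt{\mathrm{Var}\bigl(\Phi^{a,h}(f)\bigr)}\sqrt{\mathrm{Var}\bigl(\Phi^{a,h}(g)\bigr)}\,e^{-Bh^{8/15}\,\mathrm{sep}(f,g)}
\]
whenever $\tilde M(H_i)/H_i^{8/15}\ge B$ along a subsequence $H_i\downarrow 0$; letting $a_i\downarrow 0$ gives $M(\Phi^h)\ge Bh^{8/15}$, hence $C\ge\limsup_{H\downarrow 0}\tilde M(H)/H^{8/15}$ directly. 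This sidesteps your density/variational step for the witness and the $\lambda_0$ bookkeeping. Two cautions for your version: (i) you take $C<\infty$ as input from Corollary~\ref{cor:mass}, but in the paper this finiteness is established \emph{within} the present proof, by arguing that $M(\Phi^h)=\infty$ would make $\Phi^h$ uncorrelated across separated regions, forcing it (via \cite{New80}) to be Gaussian, contradicting the non-Gaussianity of box magnetizations from \cite{CGN16}; without this step your witness construction is vacuous. (ii) Your bound ``for all $\psi\in\Omega_H^\perp$'' asserts that the full transfer-operator gap equals the two-point mass $\tilde M(H)$; the paper obtains this via \cite{LP68}, while for your application it suffices that $\psi_f^a$ lies in the closed transfer-invariant span of single-spin vectors, on which the bound follows from reflection positivity and the two-point decay alone.
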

\begin{remark}
Corollary \ref{cor1} and Theorem \ref{thmupper} combine to give
\[B_0H^{8/15}\leq \tilde{M}(H)\leq (C+\epsilon) H^{8/15}\]
for any $\epsilon >0$ and small $H>0$, with $B_0, C\in (0,\infty)$. This is a strong version of showing that the ($H\downarrow 0$ at $\beta_c$) Ising correlation critical exponent is $8/15$:
\[\lim_{H\downarrow 0}\log(\tilde{M}(H))/\log(H)=8/15.\]
This result complements that of \cite{CJN19} (which improved the result of \cite{CGN14}) that the ($H\downarrow 0$ at $\beta_c$) Ising magnetization exponent is $1/15$:
\[\lim_{H\downarrow 0}\frac{\langle \sigma_0\rangle_{1,H}}{H^{1/15}}=B\in(0,\infty).\]
\end{remark}

\subsection{Description of the proof of exponential decay}
For the reader's convenience, in this subsection we sketch the main arguments of the proof of exponential decay of the truncated two-point function, which represents the core of the paper.
We assume that the reader has some familiarity with FK percolation and the Edwards-Sokal coupling of the Ising model with FK percolation --- including when there is an external magnetic field implemented by couplings to a ghost vertex, denoted $g$. This coupling is discussed with more details in Section \ref{secpre} below. With this knowledge, the notation in this subsection should be self-explanatory (e.g., we use
$\{x\longleftrightarrow y\centernot\longleftrightarrow g\}$ to denote the event that vertices $x$ and $y$ are in the same FK-open cluster and that that cluster is not connected to the ghost). Precise definitions are given later on, when we present the actual proofs.

In our arguments, we use in a crucial way a version of the Edwards-Sokal coupling which makes reference to whole FK clusters rather than individual vertices --- i.e., we consider the clusters formed by the open edges within $\mathbb{Z}^2$ and whether those whole clusters are connected to the ghost. This approach, which is discussed in greater detail in \cite{CJN19}, allows us to express the Radon-Nikodym derivative of the distribution of the FK-open clusters in FK percolation with a ghost vertex with respect to the distribution of the clusters in the model without a ghost vertex in terms of the areas of the FK-open clusters (see \eqref{eqRN}). This coupling also allows us to write the probability of each FK-open cluster to be plus in terms of the size of the cluster (see Proposition~\ref{propFKcon}), a fact that we'll exploit in the proof.

The first step of the proof of exponential decay consists in writing (see Lemma~\ref{lemES})
\begin{eqnarray}
\langle\sigma_x;\sigma_y\rangle_{a,h} & = & \mathbb{P}_h^a(x\longleftrightarrow y)-\mathbb{P}_h^a(x\longleftrightarrow g)\mathbb{P}_h^a(y\longleftrightarrow g) \nonumber \\
& = & \mathbb{P}_h^a(x\longleftrightarrow y \centernot\longleftrightarrow g) - \text{Cov}_h^a(1_{\{x\longleftrightarrow g\}}, 1_{\{y\longleftrightarrow g\}}), \label{eqdifference}
\end{eqnarray}
where $\text{Cov}_h^a$ is the covariance of the FK measure $\mathbb{P}_h^a$ on $a\mathbb{Z}^2$ corresponding to the Ising model with external field $H=ha^{15/8}$ and $1_{\{\cdot\}}$ is the indicator function. It may be worth noting that in the first 3 versions of this paper on arXiv, we proved exponential decay first for large $h$ and then extended it to general $h>0$ by conformal covariance. In this version, we combine the arguments for large $h$ and small $h$, as suggested by a referee. There are many ways to prove exponential decay for large $h$ (or more accurately for large $H$ and fixed $a$); a key feature of this paper is that we obtain the correct dependence of the correlation length on $H$ as $H\downarrow 0$.

Letting $B(x,L)$ denote the square centered at $x$ of side length $2L$ and writing
\begin{align*}
A^c_x:=\{&\text{there exists an FK-open path from } x, \text{ within }B(x,|x-y|/3), \text { to some } \\
&w \text{ with the edge from } w \text{ to } g \text{ open}  \}
\end{align*}
and $A^f_x:=\{x\longleftrightarrow g\}\setminus A^c_x$, so that $\{x\longleftrightarrow g\} = A^c_x \cup A^f_x$, the covariance in \eqref{eqdifference}
can be written as a sum of four covariances and $\langle\sigma_x;\sigma_y\rangle_{a,h}$ as a sum of five terms. Bounding four of these five terms reduces to showing that
\begin{equation} \label{eqstar}
\mathbb{P}_h^a(g \centernot\longleftrightarrow x \longleftrightarrow \partial B(x,|x-y|/3)) \leq \tilde C(h) a^{1/8} e^{\hat C(h)|x-y|}.
\end{equation}
The remaining term, $\text{Cov}_h^a(1_{A_x^c}, 1_{A_y^c})$, needs a separate argument and will be discussed later.

Focusing for now on \eqref{eqstar}, the power law part of the upper bound comes from a 1-arm argument (see Lemma~\ref{lem1arm}), while the exponential part requires a more sophisticated argument that makes use of the conformal measure ensemble, CME$_{16/3}$, coupled to CLE$_{16/3}$~\cite{CCK17} as well as a stochastic domination theorem by Liggett, Schonmann and Stacey \cite{LSS97}. CLE$_{16/3}$ is the scaling limit of the collection of lattice boundaries of critical FK percolation clusters, which suggests that in the scaling limit, the continuum cluster measures of CME$_{16/3}$ are functions of and hence coupled to the loops of CLE$_{16/3}$; this is indeed the case. Roughly speaking, what we use of the coupled CLE$_{16/3}$ and CME$_{16/3}$ is the fact that, for $K$ large, a realization inside any rectangle is likely to contain a chain of not more than $K$ touching loops that cross the rectangle in the long direction, with the first loop touching one of the short sides of the rectangle and the last loop touching the opposite side (see Fig.~\ref{fig:E(K)}). Moreover, the ``areas'' of the continuum clusters associated to the loops in the chain are likely to be larger than $1/K$, with the probability of the event just described going to one as $K \to \infty$. Back on the lattice, this implies that, inside an appropriate rectangle, one can find with high probability a chain of FK-open clusters one lattice spacing away from each other and crossing the rectangle. Moreover, such clusters will, with high probability, have sizes of order $a^{-15/8}$. Lemma~\ref{lemtanh} ensures that, in the FK model corresponding to the Ising model with external field $ah^{15/8}$, FK-open clusters whose size is of order $a^{-15/8}$ are connected to $g$ with high probability.

Combining all of the above, with the help of the FKG inequality, one can show that, with high probability, a large annulus contains a circuit of FK-open clusters one lattice spacing away from each other, each connected to $g$, such that the circuit disconnects the inner square of the annulus from the outer one (see Fig.~\ref{fig:G}). We call such an annulus \emph{good}.

In order to complete the proof of \eqref{eqstar}, we cover the plane with large overlapping annuli in such a way that their inner squares tile the plane. For each such annulus, the event that it is good happens with high probability. We would like to conclude that good annuli percolate, but the annuli are overlapping, so the events are not independent. To deal with this, one can use a stochastic domination result due to Liggett, Schonmann and Stacey \cite{LSS97}. Now, percolation of good annuli implies that the probability that $x$ is surrounded by a circuit of good annuli contained in a square $B(x,L)$ of size $2L$ centered at $x$ is close to one, exponentially in $L$. But because of planarity, if $x$ is surrounded by a circuit of good annuli contained in $B(x,L)$, the event $\{ g \centernot\longleftrightarrow x \longleftrightarrow \partial B(x,L) \}$ cannot happen. This provides the desired exponential bound.

The remaining term can be written as
\begin{eqnarray*}
\text{Cov}_h^a(1_{A_x^c}, 1_{A_y^c}) & = &
\mathbb{P}_h^a(A^c_x \cap A^c_y) - \mathbb{P}_h^a(A^c_x)\mathbb{P}_h^a(A^c_y) \\
& = & \mathbb{P}_h^a(A^c_y) \big[ \mathbb{P}_h^a(A^c_x | A^c_y) - \mathbb{P}_h^a(A^c_y) \big].
\end{eqnarray*}
A 1-arm argument (see Lemma~\ref{lem1arm}) provides a polynomial upper bound of order $a^{1/8}$ for $\mathbb{P}_h^a(A^c_y)$.
The first step in dealing with the remaining factor consists in showing that $\mathbb{P}_h^a(A^c_x | A^c_y)$ is smaller than the probability of the event $A^c_x$ with wired boundary condition on $B(x,2|x-y|/3)$. A key ingredient in proving this fact is Lemma~\ref{lemstochdom}, whose proof is based on showing the monotonicity of the Radon-Nikodym derivative of a suitable conditional FK measure in $B(x,2|x-y|/3)$ with respect to the FK measure in $B(x,2|x-y|/3)$ with wired boundary condition. The remaining step consists in showing that the probability of $A^c_x$ is not affected much by the boundary condition in $B(x,2|x-y|/3)$, which follows from Proposition~\ref{propFK}.

\section{Preliminary definitions and results}\label{secpre}
\subsection{Ising model and FK percolation}
In this subsection, our definitions and terminology (especially after the ghost vertex is introduced below) follow those of \cite{Ale98}. With vertex set $a\mathbb{Z}^2$, we write $a\mathbb{E}^2$ for the set of nearest neighbour edges of $a\mathbb{Z}^2$. For any $D\subseteq \mathbb{R}^2$, let $D^a:=a\mathbb{Z}^2\cap D$ be the set of points of $a\mathbb{Z}^2$ in $D$, and call it the \textit{$a$-approximation} of $D$. For $\Lambda\subseteq a\mathbb{Z}^2$, define $\Lambda^C:=a\mathbb{Z}^2\setminus\Lambda$,
\[\partial_{in}\Lambda:=\{z\in a\mathbb{Z}^2: z\in \Lambda, z \text{ has a nearest neighbor in }\Lambda^C \}, \]
\[\partial_{ex}\Lambda:=\{z\in a\mathbb{Z}^2: z\notin \Lambda, z \text{ has a nearest neighbor in }\Lambda \}, \]
\[\overline{\Lambda}:=\Lambda\cup \partial_{ex}\Lambda.\]
Let $\mathscr{B}(\Lambda)$ be the set of all edges $\{z,w\}\in a\mathbb{E}^2$ with $z,w\in \Lambda$, and $\overline{\mathscr{B}}(\Lambda)$ be the set of all edges $\{z,w\}$ with $z$ or $w\in \Lambda$. We will consider the extended graph $G=(V,E)$ where $V=a\mathbb{Z}^2\cup\{g\}$ ($g$ is usually called the \textit{ghost vertex} \cite{Gri67}) and $E$ is the set of nearest-neighbor edges of $a\mathbb{E}^2$ plus $\{\{z,g\}:z\in a\mathbb{Z}^2\}$. The edges of $a\mathbb{E}^2$ are called \textit{internal edges} while $\{\{z,g\}:z\in a\mathbb{Z}^2\}$ are called \textit{external edges}. Let $\mathscr{E}(\Lambda)$ be the set of all external edges with an endpoint in $\Lambda$, i.e.,
\[\mathscr{E}(\Lambda):=\left\{\left\{z,g\right\}:z\in \Lambda\right\}.\]
%Denote by $\mathscr{G}_{\Lambda}$ the $\sigma$-algebra generated by all $\sigma_z$ with $z\in\Lambda$, and by $\mathscr{F}_{\Lambda}$ the %$\sigma$-algebra generated by $\omega_e$ with $e\in \mathscr{B}(\Lambda)\cup\mathscr{E}(\Lambda)$.

Let $\Lambda_L:=[-L,L]^2$ and $\Lambda^a_{L}$ be its $a$-approximation. The classical Ising model at inverse (critical) temperature $\beta_c$ on $\Lambda^a_{L}$ with boundary condition $\eta\in\{-1,+1\}^{\partial_{ex}\Lambda_L^a}$ and external field $a^{\frac{15}{8}}h\geq 0$ is the probability measure $P_{\Lambda_L,\eta, h}^{a}$ on $\{-1,+1\}^{\Lambda^a_{L}}$  such that for any $\sigma\in \{-1,+1\}^{\Lambda^a_{L}}$,
\begin{equation}\label{eqIsingdef}
P_{\Lambda_L,\eta, h}^{a}(\sigma)=\frac{1}{Z^a_{L,\eta,h}}e^{\beta_c\sum_{\{u,v\}}\sigma_u\sigma_v+\beta_c\sum_{\{u,v\}:u\in\Lambda_L^a, v\in\partial_{ex} \Lambda_L^a}\sigma_u\eta_v+a^{15/8}h\sum_{u\in\Lambda^a_{L}}\sigma_u},
\end{equation}
where the first sum is over all nearest neighbor pairs (i.e., $|u-v|=a$) in $\Lambda^a_{L}$, and $Z^a_{L,\eta,h}$ is the partition function (which is the normalization constant needed to make this a probability measure). $P^a_{\Lambda_L,f, h}$ denotes the probability measure with free boundary condition --- i.e., where we omit the second of the three terms in the exponent of \eqref{eqIsingdef}.

It is known that $P_{\Lambda_L,\eta, h}^{a}$ has a unique infinite volume limit as $L\rightarrow\infty$, which we denote by $P_h^a$. Note that this limiting measure does not depend on the choice of boundary condition (see, e.g., Theorem 1 of \cite{Leb72} or the theorem in the appendix of \cite{Rue72}).

The FK (Fortuin and Kasteleyn) percolation model at $\beta_c$ on $\Lambda^a_{L}$ with boundary condition $\rho\in\{0,1\}^{\overline{\mathscr{B}}\left((\Lambda^a_L)^C\right)\cup\mathscr{E}\left((\Lambda^a_L)^C\right)}$ and with external field $a^{\frac{15}{8}}h\geq0$ is the probability measure $\mathbb{P}_{\Lambda_L,\rho,h}^a$ on $\{0,1\}^{\mathscr{B}(\Lambda^a_L)\cup\mathscr{E}(\Lambda^a_L)}$ such that for any $\omega\in\{0,1\}^{\mathscr{B}(\Lambda^a_L)\cup\mathscr{E}(\Lambda^a_L)}$,
\begin{align}\label{eqFKdef}
\mathbb{P}_{\Lambda_L,\rho,h}^a(\omega)=\frac{2^{\mathcal{K}\left(\Lambda_L^a, (\omega\rho)_{\Lambda^a_L}\right)}}{\tilde{Z}^a_{L,\rho,h}}\prod_{e\in\mathscr{B}(\Lambda^a_L)}(1-e^{-2\beta_c})^{\omega(e)}(e^{-2\beta_c})^{1-\omega(e)}\nonumber\\
\times\prod_{e\in\mathscr{E}(\Lambda^a_L)}(1-e^{-2a^{15/8}h})^{\omega(e)}(e^{-2a^{15/8}h})^{1-\omega(e)},
\end{align}
where $(\omega\rho)_{\Lambda^a_L}$ denotes the configuration which coincides with $\omega$ on $\mathscr{B}(\Lambda^a_L)\cup\mathscr{E}(\Lambda^a_L)$ and with $\rho$ on $\overline{\mathscr{B}}\left((\Lambda^a_L)^C\right)\cup\mathscr{E}\left((\Lambda^a_L)^C\right)$, $\mathcal{K}\left(\Lambda_L^a, (\omega\rho)_{\Lambda^a_L}\right)$ denotes the number of clusters in $(\omega\rho)_{\Lambda^a_L}$ which intersect $\Lambda_L^a$ and do not contain $g$, and $\tilde{Z}^a_{L,\rho,h}$ is the partition function. An edge $e$ is said to be \textit{open} if $\omega(e)=1$, otherwise it is said to be \textit{closed}. $\mathbb{P}_{\Lambda_L,\rho,h}^a$ is also called the \textit{random-cluster measure} (with cluster weight $q=2$) at $\beta_c$ on $\Lambda^a_{L}$ with boundary condition $\rho$ and with external field $a^{\frac{15}{8}}h\geq 0$. $\mathbb{P}^a_{\Lambda_L,f, h}$ (respectively, $\mathbb{P}^a_{\Lambda_L,w,h}$) denotes the probability measure with free (respectively, wired) boundary condition, i.e., $\rho\equiv 0$ (respectively, $\rho\equiv 1$) in \eqref{eqFKdef}. Below we will also consider FK measures $\mathbb{P}^a_{D,\rho,h}$ for more general domains $D\subseteq\mathbb{R}^2$, defined in the obvious way.

It is also known that $\mathbb{P}_{\Lambda_L,\rho,h}^a$ has a unique infinite volume limit as $L\rightarrow\infty$, which we denote by $\mathbb{P}_h^a$. Again this limiting measure does not depend on the choice of boundary condition. The reader may refer to \cite{Gri06} for more details in the case $h=0$; the proof for $h>0$ is similar.

\subsection{Basic properties}
The Edwards-Sokal coupling \cite{ES88}, based on the Swendsen-Wang algorithm \cite{SW87}, is a coupling of the Ising model and FK percolation. Let $\hat{\mathbb{P}}_h^a$ be such a coupling measure of $P_h^a$ and $\mathbb{P}_h^a$ defined on $\{-1,+1\}^V\times\{0,1\}^E$. The marginal of $\hat{\mathbb{P}}_h^a$ on $\{-1,+1\}^{a\mathbb{Z}^2}$ is $P_h^a$, and the marginal of $\hat{\mathbb{P}}_h^a$ on $\{0,1\}^E$ is $\mathbb{P}_h^a$. The conditional distribution of the Ising spin variables given a realization of the FK bonds can be realized by tossing independent fair coins --- one for each FK-open cluster not containing $g$ --- and then setting $\sigma_x$ for all vertices $x$ in the cluster to $+1$ for heads and $-1$ for tails. For $x$ in the ghost cluster, $\sigma_x= +1$ (for $h>0$). We note that a coupling for $h\neq 0$ between \emph{internal} FK edges and spin variables is given in Lemma \ref{lemtanh} and Proposition \ref{propFKcon} below.

For any $u,v\in V$, we write $u\longleftrightarrow v$ for the event that there is a path of FK-open edges that connects $u$ and $v$, i.e., a path $u=z_0, z_1, \ldots, z_n=v$ with $e_i=\{z_i, z_{i+1}\}\in E$ and $\omega(e_i)=1$ for each $0\leq i<n$. For any $A, B\subseteq V$, we write $A\longleftrightarrow B$ if there is some $u\in A$ and $v\in B$ such that $u\longleftrightarrow v$. $A\centernot\longleftrightarrow B$ denotes the complement of $A\longleftrightarrow B$. The following identity, immediate from the coupling, is essential.

\begin{lemma}\label{lemES}
\begin{equation}\label{eqES}
\langle\sigma_x;\sigma_y\rangle_{a,h}=\mathbb{P}_h^a(x\longleftrightarrow y)-\mathbb{P}_h^a(x\longleftrightarrow g)\mathbb{P}_h^a(y\longleftrightarrow g).
\end{equation}
\end{lemma}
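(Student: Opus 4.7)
The plan is to derive the identity directly from the Edwards--Sokal coupling $\hat{\mathbb P}_h^a$ described immediately before the lemma, by conditioning on the FK configuration $\omega \in \{0,1\}^E$ and using the explicit conditional law of the spins. The key observation is that, under $\hat{\mathbb P}_h^a$, the sign assigned to each FK cluster not containing the ghost vertex $g$ is an independent fair $\pm 1$ coin flip, while every vertex in the ghost cluster is forced to take value $+1$. So everything reduces to an enumeration of what $\sigma_x \sigma_y$ and $\sigma_x, \sigma_y$ look like cluster by cluster.

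First I would compute the one-point function. Given $\omega$, if $x \longleftrightarrow g$ then $\sigma_x = +1$ deterministically, and otherwise $\sigma_x$ is a uniform $\pm 1$ with mean zero. Therefore $\hat{\mathbb E}[\sigma_x \mid \omega] = \mathbf 1\{x \longleftrightarrow g\}$, and taking expectations gives $\langle \sigma_x \rangle_{a,h} = \mathbb P_h^a(x \longleftrightarrow g)$. For the two-point function I would split on the cluster structure of $\omega$: if $x \longleftrightarrow y$ (possibly via $g$) then $x$ and $y$ share the same cluster and hence the same sign, so $\sigma_x \sigma_y = +1$; if $x \not\longleftrightarrow y$, then $\sigma_x$ and $\sigma_y$ are conditionally independent, and moreover at most one of $\{x \longleftrightarrow g\}$, $\{y \longleftrightarrow g\}$ can hold (otherwise $x$ and $y$ would both lie in the ghost cluster, contradicting $x \not\longleftrightarrow y$), so at least one of them has conditional mean zero. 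In all cases $\hat{\mathbb E}[\sigma_x \sigma_y \mid \omega] = \mathbf 1\{x \longleftrightarrow y\}$, hence $\langle \sigma_x \sigma_y \rangle_{a,h} = \mathbb P_h^a(x \longleftrightarrow y)$.

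Subtracting yields exactly \eqref{eqES}. There is no real obstacle here; the identity is the standard Edwards--Sokal correlation/connectivity correspondence adapted to $h>0$ via the ghost-vertex formalism, and once the conditional spin law is in hand the computation is a two-line case split. The only point requiring a brief word is that both sides are continuous under the infinite-volume limit $L \to \infty$: the finite-volume version is established by the same conditioning argument applied to $\hat{\mathbb P}_{\Lambda_L,\rho,h}^a$, and then one passes to the limit using the convergence of $P_{\Lambda_L,\eta,h}^a \to P_h^a$ and $\mathbb P_{\Lambda_L,\rho,h}^a \to \mathbb P_h^a$ already cited in Subsection~2.1. This extension is standard and independent of boundary conditions.
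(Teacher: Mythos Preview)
Your proof is correct and is exactly the approach the paper has in mind: the lemma is stated as ``immediate from the coupling'' with no further argument, and what you have written is precisely the standard Edwards--Sokal conditioning computation that justifies it. Your added remark on the infinite-volume limit is a reasonable care point, though the paper does not spell it out either.
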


%The following lemma is easy to prove; see, for example Proposition 4.37 of \cite{Gri06}.
%\begin{lemma}\label{lem2}
%$\mathbb{P}_h^a$ stochastically dominates the Bernoulli (independent) percolation measure on $E$ where each edge of $a\mathbb{E}^2$ is open %with probability $1-e^{-\beta_c}$ and each edge $\{z,g\}$ is open with probability $1-e^{-a^{15/8}h}$.
%\end{lemma}

Let $\mathbb{P}^a:=\mathbb{P}^a_{h=0}$. By standard comparison inequalities for FK percolation (Proposition 4.28 in \cite{Gri06}), one has
\begin{lemma}\label{lem3}
For any $h\geq 0$, $\mathbb{P}_h^a$ stochastically dominates $\mathbb{P}^a$.
\end{lemma}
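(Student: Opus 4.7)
The plan is to apply the standard comparison inequality for random-cluster measures with cluster parameter $q=2$ at finite volume on the extended graph $G$, and then pass to the infinite-volume limit.

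First, I would put both measures into a common finite-volume framework with free boundary conditions. Fix $L$ and consider $\mathbb{P}^a_{\Lambda_L,f,h}$ and $\mathbb{P}^a_{\Lambda_L,f,0}$, both defined by \eqref{eqFKdef} on the same edge set $\mathscr{B}(\Lambda_L^a)\cup\mathscr{E}(\Lambda_L^a)$, with the same cluster weight $q=2$ and the same cluster-counting convention (clusters intersecting $\Lambda_L^a$ and not containing the ghost $g$). The only discrepancy is edge-wise: on internal edges both measures assign the same $p=1-e^{-2\beta_c}$, while on external edges $\mathbb{P}^a_{\Lambda_L,f,h}$ assigns $p_h=1-e^{-2a^{15/8}h}$ and $\mathbb{P}^a_{\Lambda_L,f,0}$ assigns $0$. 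Since $p_h\geq 0$, the two parameter vectors are pointwise ordered, with equality on the internal part.

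Second, I would invoke Proposition 4.28 of \cite{Gri06}, which handles precisely this situation: two random-cluster measures on a common finite graph, with common $q\geq 1$ and pointwise-ordered edge parameters, are stochastically ordered in the same direction. Applying it yields
\[
\mathbb{P}^a_{\Lambda_L,f,0}\,\leq_{st}\,\mathbb{P}^a_{\Lambda_L,f,h}.
\]
Third, I would pass to the infinite-volume limit $L\to\infty$. Both sequences converge weakly to their unique infinite-volume limits $\mathbb{P}^a$ and $\mathbb{P}_h^a$, as recalled after \eqref{eqFKdef}, and stochastic domination is preserved under weak convergence on the local (cylinder) $\sigma$-algebra; this delivers $\mathbb{P}^a\leq_{st}\mathbb{P}_h^a$.

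The only point that requires a moment of care is that the ghost-vertex cluster-counting rule baked into \eqref{eqFKdef} is compatible with the hypothesis of Proposition~4.28 on the extended graph $G$: one must check that what is being varied is genuinely just an edge parameter (the $p_h$ on external edges) while $q=2$ and the counting rule are held fixed. This is visible directly from \eqref{eqFKdef}, so there is no genuine obstacle---which is why the authors are content to record this as a one-line consequence of standard comparison inequalities.
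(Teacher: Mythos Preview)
Your proposal is correct and follows the same approach as the paper, which simply cites Proposition~4.28 of \cite{Gri06} without spelling out the finite-volume setup or the passage to the limit. You have merely supplied the details behind that one-line citation; the care you take with the ghost-vertex cluster-counting is not a genuine complication, since (as you note) the rule in \eqref{eqFKdef} differs from the standard $q=2$ count on the extended graph only by a configuration-independent factor.
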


%From section 4.4 of \cite{Gri06}, one can see that $\mathbb{P}^a_h$ satisfies the Dobrushin-Lanford-Ruelle (DLR) property, as follows.

%\begin{lemma}\label{lem4}
%For any finite subset $\Lambda\subseteq a\mathbb{Z}^2$, let $\mathscr{T}_{\Lambda}$ be the $\sigma$-algebra generated by configurations in %$\{0,1\}^{\overline{\mathscr{B}}(\Lambda^C)\cup\mathscr{E}(\Lambda^C)}$. Then for any event $E\in \mathscr{F}_{\Lambda}$, we have
%\[\mathbb{P}^a_h(E|\mathscr{T}_{\Lambda})(\rho)=\mathbb{P}^{a}_{\Lambda,\rho,h}(E)\text{ for } \mathbb{P}^a_h-a.e. ~\rho.\]
%\end{lemma}

%In other words, given that the state of edges in $\overline{\mathscr{B}}(\Lambda^C)\cup\mathscr{E}(\Lambda^C)$ is $\rho$, then for a.e. $\rho$, %the conditional measure on $\mathscr{B}(\Lambda)\cup\mathscr{E}(\Lambda)$ is the finite volume FK percolation measure %$\mathbb{P}^{a}_{\Lambda,\rho,h}$ defined in \eqref{eqFKdef}. The Ising measure $P_h^a$ satisfies a similar DLR property, as follows.
%\begin{lemma}\label{lemdlr}
%For any finite subset $\Lambda\subseteq a\mathbb{Z}^2$ and any event $E\in \mathscr{G}_{\Lambda}$, we have
%\[P^a_h(E|\mathscr{G}_{\Lambda^C})(\eta)=P^{a}_{\Lambda,\eta,h}(E)\text{ for } P^a_h-a.e. ~\eta.\]
%\end{lemma}

The following lemma is about the one-arm exponent for FK percolation with $h=0$. The proof is a direct consequence of Wu's result \cite{Wu66,MW73} and the RSW-type result \cite{DCHN11} (see also Lemma 5.4 of \cite{DCHN11} for a different proof).
\begin{lemma}\label{lem1arm}
There exists a constant $C_1$ independent of $a$ such that for all $a\leq 1$ and for any boundary condition $\rho\in\{0,1\}^{\overline{\mathscr{B}}\left((\Lambda^a_1)^C\right)\cup\mathscr{E}\left((\Lambda^a_1)^C\right)}$,
\[\mathbb{P}^a_{\Lambda_1,\rho,h=0}(0\longleftrightarrow\partial_{in}\Lambda_1^a)\leq C_1a^{1/8}.\]
\end{lemma}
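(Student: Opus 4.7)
The plan is to reduce the statement to the FK-Ising one-arm estimate on $\mathbb{Z}^2$, namely Lemma~5.4 of \cite{DCHN11}, by two elementary steps: monotonicity in boundary conditions and a scaling argument.

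First, by the standard comparison inequality for the random-cluster measure with $q=2$ (the finite-volume analogue of Lemma~\ref{lem3}; cf.\ Theorem~3.21 of \cite{Gri06}), the measure $\mathbb{P}^a_{\Lambda_1,\rho,0}$ is stochastically dominated by $\mathbb{P}^a_{\Lambda_1,w,0}$ for every boundary condition $\rho$. Since $\{0\longleftrightarrow\partial_{in}\Lambda_1^a\}$ is an increasing event, it suffices to prove the bound with $\rho=w$. Note that at $h=0$ every external edge is almost surely closed, so the ghost vertex plays no role here.

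Second, rescale the lattice by the factor $1/a$. Because the critical internal edge weight $1-e^{-2\beta_c}$ is independent of $a$, rescaling is a measure-preserving bijection between the critical FK-Ising measure on $a\mathbb{Z}^2\cap\Lambda_1$ with wired boundary and the critical FK-Ising measure on $\mathbb{Z}^2\cap\Lambda_R$ with wired boundary, where $R:=1/a\geq 1$; it sends $\{0\longleftrightarrow\partial_{in}\Lambda_1^a\}$ to $\{0\longleftrightarrow\partial_{in}\Lambda_R^1\}$. Now invoke Lemma~5.4 of \cite{DCHN11}, which yields $C_1<\infty$ such that
$$\mathbb{P}^1_{\Lambda_R,w,0}\bigl(0\longleftrightarrow\partial_{in}\Lambda_R^1\bigr)\leq C_1 R^{-1/8}\quad\text{for all }R\geq 1,$$
and substitute $R=1/a$.

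All of the substance lies in the invocation of \cite{DCHN11}. The exponent $1/8$ is the magnetic scaling exponent of the critical Ising model: via the Edwards--Sokal coupling, $\mathbb{P}^1_{\Lambda_R,w,0}(0\longleftrightarrow\partial_{in}\Lambda_R^1)$ equals the plus-boundary magnetization $\langle\sigma_0\rangle^+_{\Lambda_R^1}$, and \cite{DCHN11} bounds this magnetization by $C_1R^{-1/8}$ by combining RSW-type crossing estimates for critical FK-Ising with Smirnov's discrete holomorphic fermionic observable and Wu's asymptotic $\langle\sigma_0\sigma_x\rangle\asymp|x|^{-1/4}$. The genuine obstacle is thus the $1/8$ exponent itself; the monotonicity and scale invariance needed to bring the quoted integer-lattice estimate into the form stated as Lemma~\ref{lem1arm} are entirely routine.
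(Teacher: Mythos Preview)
Your proof is correct and matches the paper's approach: the paper simply states that the proof can be found in Lemma~5.4 of \cite{DCHN11}, and your reduction via monotonicity in boundary conditions and rescaling to the unit lattice is exactly the routine bridge needed to invoke that result.
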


Let $D\subseteq \mathbb{R}^2$ be bounded,  and $D^a:=a\mathbb{Z}^2\cap D$ be the $a$-approximation of $D$. For any $\omega\in\{0,1\}^{\mathscr{B}(D^a)}$, let $\mathscr{C}(D^a,\omega)$ denote the set of clusters of $\omega$; for a $\mathcal{C}\in\mathscr{C}(D^a,\omega)$, let $|\mathcal{C}|$ denote the number of vertices in $\mathcal{C}$. Then we have
\begin{lemma}\label{lemtanh}
For any  $\omega\in\{0,1\}^{\mathscr{B}(D^a)}$, suppose $\mathscr{C}(D^a,\omega)=\{\mathcal{C}_1,\mathcal{C}_2,\ldots\}$ where $\mathcal{C}_i$'s are distinct. Then for any $\mathcal{C}_i\in \mathscr{C}(D^a,\omega)$
\begin{align}\label{eqtanh}
 \mathbb{P}_{D,f,h}^a(\mathcal{C}_i\longleftrightarrow g|\omega)=\tanh(ha^{15/8}|\mathcal{C}_i|).
\end{align}
Moreover, conditioned on $\omega$, the events $\{\mathcal{C}_i\longleftrightarrow g\}$ are mutually independent.
\end{lemma}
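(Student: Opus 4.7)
The strategy is a direct computation starting from the explicit form of the FK density in \eqref{eqFKdef} with free boundary conditions, restricted to the external edges. The point is that once the internal configuration $\omega$ on $\mathscr{B}(D^a)$ is fixed, the conditional law of the external edges $\{\omega(\{v,g\}):v\in D^a\}$ factorizes across the internal clusters $\mathcal{C}_1,\mathcal{C}_2,\ldots$, so all the work reduces to a single-cluster computation.

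\textbf{Step 1: Conditional density of the external edges.} Writing $\alpha:=1-e^{-2a^{15/8}h}$ and $\bar{\alpha}:=e^{-2a^{15/8}h}$, I would plug the free boundary condition $\rho\equiv 0$ into \eqref{eqFKdef} and note that, for fixed $\omega$ on $\mathscr{B}(D^a)$, the only $\eta$-dependent factors in the joint density are
\[
2^{\mathcal{K}(D^a,(\omega,\eta))}\prod_{v\in D^a}\alpha^{\eta(\{v,g\})}\bar{\alpha}^{1-\eta(\{v,g\})}.
\]
Since the internal clusters of $(\omega,\eta)$ not containing $g$ are exactly those $\mathcal{C}_i$ with $\mathcal{C}_i\not\longleftrightarrow g$, one has
\[
2^{\mathcal{K}(D^a,(\omega,\eta))}=\prod_i 2^{\mathbf{1}[\mathcal{C}_i\not\longleftrightarrow g]}.
\]
Moreover, each $v\in D^a$ lies in exactly one $\mathcal{C}_i$, so the product of external-edge weights splits as $\prod_i \prod_{v\in\mathcal{C}_i}\alpha^{\eta(\{v,g\})}\bar{\alpha}^{1-\eta(\{v,g\})}$.

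\textbf{Step 2: Factorization and the single-cluster computation.} The previous step shows that the conditional law of $\eta$ given $\omega$ is a product measure over the clusters $\mathcal{C}_i$, which already establishes the mutual independence of the events $\{\mathcal{C}_i\longleftrightarrow g\}$ claimed at the end of the lemma. For a single cluster $\mathcal{C}_i$ with $n_i:=|\mathcal{C}_i|$, setting $k_i:=\sum_{v\in\mathcal{C}_i}\eta(\{v,g\})$ and summing over $\eta$-configurations on $\mathcal{E}(\mathcal{C}_i)$ gives the (unnormalized) weights
\[
W_i^{(0)}:=2\,\bar{\alpha}^{n_i}\quad(k_i=0),\qquad W_i^{(+)}:=\sum_{k=1}^{n_i}\binom{n_i}{k}\alpha^k\bar{\alpha}^{n_i-k}=1-\bar{\alpha}^{n_i}\quad(k_i\geq 1),
\]
so that
\[
\mathbb{P}^a_{D,f,h}(\mathcal{C}_i\longleftrightarrow g\mid\omega)=\frac{W_i^{(+)}}{W_i^{(0)}+W_i^{(+)}}=\frac{1-\bar{\alpha}^{n_i}}{1+\bar{\alpha}^{n_i}}.
\]

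\textbf{Step 3: Identification with $\tanh$.} Substituting $\bar{\alpha}^{n_i}=e^{-2a^{15/8}h|\mathcal{C}_i|}$ and using the elementary identity $(1-e^{-2x})/(1+e^{-2x})=\tanh(x)$ with $x=a^{15/8}h|\mathcal{C}_i|$ yields \eqref{eqtanh}.

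There is no real obstacle here: the only thing to be careful about is the exact convention for $\mathcal{K}$ (namely, that it excludes the ghost cluster, so all $\mathcal{C}_i$ connected to $g$ are simultaneously absorbed into a single non-counted cluster), which is precisely what makes the cluster-weight factor collapse into the product $\prod_i 2^{\mathbf{1}[\mathcal{C}_i\not\leftrightarrow g]}$ and thus enables the factorization. Once this is noted, the rest is bookkeeping.
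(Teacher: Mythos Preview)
Your proof is correct and follows essentially the same route as the paper: conditioning on the internal configuration $\omega$, the external-edge contribution factorizes over clusters, and the single-cluster computation yields exactly the ratio $(1-e^{-2ha^{15/8}|\mathcal{C}_i|})/(1+e^{-2ha^{15/8}|\mathcal{C}_i|})=\tanh(ha^{15/8}|\mathcal{C}_i|)$ that the paper derives (via the proof of the subsequent proposition and its equation \eqref{eqFKweight}). Your Step~1 makes the factorization $2^{\mathcal{K}}=\prod_i 2^{\mathbf{1}[\mathcal{C}_i\not\leftrightarrow g]}$ more explicit than the paper does, but the argument is the same.
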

\begin{proof}
This follows from the proof of the next proposition.
\end{proof}

\begin{proposition}\label{propFKcon}
The Radon-Nikodym derivative of $\tilde{\mathbb{P}}_{D,f,h}^a$, the marginal of $\mathbb{P}^a_{D,f,h}$ on $\mathscr{B}(D^a)$, with respect to $\mathbb{P}_{D,f,h=0}^a$ is
\begin{equation}\label{eqRN}
\frac{d\tilde{\mathbb{P}}_{D,f,h}^a}{d\mathbb{P}_{D,f,h=0}^a}(\omega)=\frac{\prod_{\mathcal{C}\in \mathscr{C}(D^a,\omega)}\cosh(ha^{15/8}|\mathcal{C}|)}{\mathbb{E}_{D,f,h=0}^a\left[\prod_{\mathcal{C}\in \mathscr{C}(D^a,\cdot)}\cosh(ha^{15/8}|\mathcal{C}|)\right]} \; \text{ for each } \omega\in\{0,1\}^{\mathscr{B}(D^a)},
\end{equation}
where $\mathbb{E}_{D,f,h=0}^a$ is the expectation with respect to $\mathbb{P}_{D,f,h=0}^a$. Let $\hat{\mathbb{P}}^a_{D,f,h}$ be the Edwards-Sokal coupling of $\mathbb{P}^a_{D,f,h}$ and its corresponding Ising measure. For any $\mathcal{C}\in \mathscr{C}(D^a,\omega)$, let $\sigma(\mathcal{C})$ be the spin value of the cluster assigned by the coupling. Then we have, for $\omega\in\{0,1\}^{\mathscr{B}(D^a)}$,
\[\hat{\mathbb{P}}_{D,f,h}^a(\sigma(\mathcal{C}_i)=+1|\omega)=\tanh(ha^{15/8}|\mathcal{C}_i|)+\frac{1}{2}\left(1-\tanh(ha^{15/8}|\mathcal{C}_i|)\right),\]
\[\hat{\mathbb{P}}_{D,f,h}^a(\sigma(\mathcal{C}_i)=-1|\omega)=\frac{1}{2}\left(1-\tanh(ha^{15/8}|\mathcal{C}_i|)\right).\]
Moreover, conditioned on $\omega$, the events $\{\sigma(\mathcal{C}_i)=+1\}$ are mutually independent.
\end{proposition}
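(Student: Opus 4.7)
The plan is to integrate out the external-edge variables from the joint FK law of $(\omega,\tau) \in \{0,1\}^{\mathscr{B}(D^a)} \times \{0,1\}^{\mathscr{E}(D^a)}$ under $\mathbb{P}^a_{D,f,h}$. The structural fact that makes the calculation tractable is the cluster-count identity
\[
\mathcal{K}(D^a,(\omega,\tau)) \;=\; |\mathscr{C}(D^a,\omega)| \;-\; \#\{\mathcal{C}_i \in \mathscr{C}(D^a,\omega) : \mathcal{C}_i \longleftrightarrow g \text{ in }(\omega,\tau)\},
\]
valid because every internal cluster touched by an open external edge gets merged into the (uncounted) ghost cluster, while the clusters not touched by $\tau$ each contribute one unit to $\mathcal{K}$. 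Combined with the fact that distinct internal clusters are incident to disjoint sets of external edges, this makes the sum over $\tau$ factor over the internal clusters.

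Next I will do the per-cluster sum. With $p_h := 1-e^{-2ha^{15/8}}$, the contribution of a single cluster $\mathcal{C}$ is
\[
(1-p_h)^{|\mathcal{C}|} + \tfrac{1}{2}\bigl(1-(1-p_h)^{|\mathcal{C}|}\bigr) \;=\; \tfrac{1}{2}\bigl(1+e^{-2ha^{15/8}|\mathcal{C}|}\bigr) \;=\; e^{-ha^{15/8}|\mathcal{C}|}\cosh\!\bigl(ha^{15/8}|\mathcal{C}|\bigr),
\]
where the first term corresponds to $\mathcal{C}\centernot\longleftrightarrow g$ and the second to $\mathcal{C}\longleftrightarrow g$, with the $\tfrac{1}{2}$ coming from the $2^{-1}$ factor in the cluster-count identity. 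Since $\sum_i |\mathcal{C}_i|=|D^a|$ is deterministic in $\omega$, the product of the exponential prefactors is a constant and gets absorbed into the normalization of the marginal; what remains is the announced factor $\prod_{\mathcal{C}}\cosh(ha^{15/8}|\mathcal{C}|)$, giving \eqref{eqRN}. The same cluster-wise decomposition yields Lemma \ref{lemtanh}: conditioned on $\omega$, the external edges split into independent blocks, one per internal cluster, and within the block for $\mathcal{C}_i$ one computes
\[
\mathbb{P}^a_{D,f,h}\bigl(\mathcal{C}_i\longleftrightarrow g\bigm|\omega\bigr) \;=\; \frac{\tfrac{1}{2}(1-e^{-2ha^{15/8}|\mathcal{C}_i|})}{\tfrac{1}{2}(1+e^{-2ha^{15/8}|\mathcal{C}_i|})} \;=\; \tanh\!\bigl(ha^{15/8}|\mathcal{C}_i|\bigr),
\]
with mutual independence across $i$ immediate from the block structure.

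For the Edwards-Sokal part, recall that under $\hat{\mathbb{P}}^a_{D,f,h}$ every cluster of the extended configuration $(\omega,\tau)$ on $V$ receives an independent fair $\pm 1$ label, subject to the constraint that the ghost cluster is $+1$. Conditioning on $(\omega,\tau)$ therefore gives $\sigma(\mathcal{C}_i)=+1$ with probability $1$ if $\mathcal{C}_i\longleftrightarrow g$ and with probability $1/2$ otherwise, independently across internal clusters. Averaging over $\tau$ via Lemma \ref{lemtanh} produces the two displayed conditional probabilities, and the independence across $i$ survives because both the external-edge randomness and the coin flips factor across the internal clusters. The only delicate point is the cluster-count identity for $\mathcal{K}$ together with the convention that clusters containing $g$ are excluded from $\mathcal{K}$; once that bookkeeping is set up correctly, the remainder is an elementary calculation, so this is a plan with a small, self-contained main obstacle rather than a substantive one.
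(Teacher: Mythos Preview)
Your argument is correct and follows essentially the same route as the paper's proof: both sum over the external-edge configuration $\tau$ for fixed internal configuration $\omega$, obtain the per-cluster factor $1+e^{-2ha^{15/8}|\mathcal{C}|}=2e^{-ha^{15/8}|\mathcal{C}|}\cosh(ha^{15/8}|\mathcal{C}|)$ (the paper writes it as $(1-e^{-2ha^{15/8}|\mathcal{C}|})+2e^{-2ha^{15/8}|\mathcal{C}|}$ and cites \cite{Ale98} for this step rather than spelling out the cluster-count identity), read off the Radon--Nikodym derivative and the $\tanh$ conditional probability, and then invoke the Edwards--Sokal coupling for the spin statements. Your write-up is simply more explicit about the bookkeeping that the paper delegates to the reference.
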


\begin{proof}
It is not hard to show that (see e.g. pp. 447-448 of \cite{Ale98}) for each $\omega\in\{0,1\}^{\mathscr{B}(D^a)}$
\begin{equation}\label{eqFKweight}
\mathbb{P}_{D,f,h}^a(\omega)\propto\left(1-e^{-2\beta_c}\right)^{o(\omega)}\left(e^{-2\beta_c}\right)^{c(\omega)}\prod_{\mathcal{C}\in \mathscr{C}(D^a,\omega)}\left((1-e^{-2ha^{15/8}|\mathcal{C}|})+2e^{-2ha^{15/8}|\mathcal{C}|}\right),
\end{equation}
where $o(\omega)$ and $c(\omega)$ denote the number of open and closed edges of $\omega$ respectively. So \eqref{eqRN} follows from \eqref{eqFKweight}, \eqref{eqFKdef} and the fact $\tilde{\mathbb{P}}_{D,f,h}^a(\omega)=\mathbb{P}_{D,f,h}^a(\omega)$. \eqref{eqFKweight} also gives, for any $\mathcal{C}_i, \mathcal{C}_j\in \mathscr{C}(D^a,\omega)$ with $i\neq j$,
\[\mathbb{P}_{D,f,h}^a(\mathcal{C}_i\longleftrightarrow g|\omega)=\frac{1-e^{-2ha^{15/8}|\mathcal{C}_i|}}{(1-e^{-2ha^{15/8}|\mathcal{C}_i|})+2e^{-2ha^{15/8}|\mathcal{C}_i|}}=\tanh(ha^{15/8}|\mathcal{C}_i|),\]
\[\mathbb{P}_{D,f,h}^a(\mathcal{C}_i\longleftrightarrow g, \mathcal{C}_j\longleftrightarrow g|\omega)=\tanh(ha^{15/8}|\mathcal{C}_i|)\tanh(ha^{15/8}|\mathcal{C}_j|),\]
with a similar product expression for the intersection of three or more of the events $\{\mathcal{C}_i\longleftrightarrow g\}$. Hence, conditioned on $\omega$, these events are mutually independent. The rest of the proof follows from the Edwards-Sokal coupling.
\end{proof}
\begin{remark}
This type of analysis can be extended to the continuum like what is done in~\cite{CCK17} for $h=0$, with the continuum analog of the coupling in Proposition \ref{propFKcon} valid also for $h>0$. See Theorem 2 of \cite{CJN19} for such an extension.
\end{remark}

\section{Exponential decay on the lattice}\label{seclargeh}
\subsection{Exponential decay for long FK-open paths not connected to the ghost}
%\begin{remark}
%By the GHS inequality \cite{GHS70}, $\langle \sigma_x;\sigma_y\rangle_{a,h}$ is decreasing in $h$ for fixed $a, x, y$. Thus as soon as \eqref{eqTTFexp} is valid for some $h=h_0$, it is valid for all $h>h_0$ by taking $C_0(h)=C_0(h_0)$ and $m_1(h)=m_1(h_0)$. Of course, this would not give the correct $h$ dependence for the optimal value of $m_1(h)$.
%\end{remark}

\begin{figure}
\begin{center}
\includegraphics[width=0.6\textwidth]{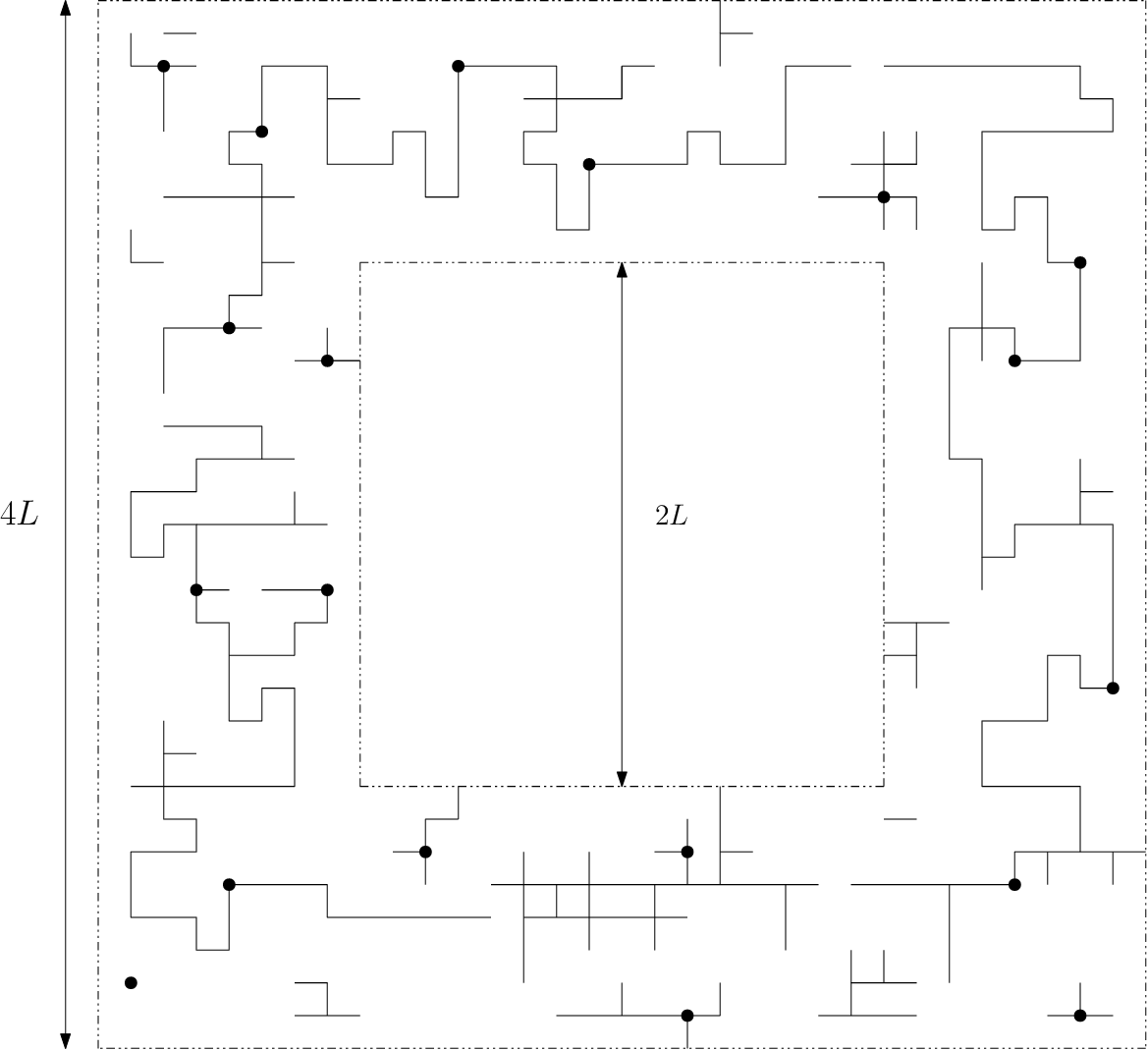}
\caption{An illustration of the event $G(a,L)$ with $a=1$ and $L=8$. FK-open edges inside the annulus are represented by \emph{solid segments} and the vertices of $\{z\in A^a(L,2L):\{z,g\} \text{ is open}\}$ are represented by \emph{black dots}.}\label{fig:G}
\end{center}
\end{figure}

Let $A(1,L):=[-L,L]^2\setminus [-1,1]^2$ be the annulus with inner radius $1$ and outer radius $L>1$. Let $A^a(1,L)$ be the $a$-approximation of $A(1,L)$. When $a$ is small, the interior boundary of $A^a(1,L)$ (i.e., $\{z\in  A^a(1,L): z\text{ has a nearest neighbor in  } \big(A^a(1,L)\big)^C\}$) naturally splits into a portion contained in $\big(-(1+L)/2,(1+L)/2\big)^2$ (denoted by $\partial_1A^a(1,L)$) and one contained in the complement of $\big(-(1+L)/2,(1+L)/2\big)^2$ (denoted by $\partial_2 A^a(1,L)$).
\begin{definition}
Let $F(a,L)$ be the event that in $A^a(1,L)$ there is an FK-open path from $\partial_1 A^a(1,L)$ to $\partial_2 A^a(1,L)$ consisting of vertices not connected via $A^a(1,L)$ to $g$.
\end{definition}
Similarly, let $A(L,2L)$ be the annulus with inner radius $L$ and outer radius $2L$ and $A^a(L,2L)$ be its $a$-approximation. We will consider circuits in the annulus --- i.e., nearest neighbor self-avoiding paths of vertices that end up at their starting vertex.
\begin{definition}
Let $G(a,L)$ denote the event that there is a circuit of vertices surrounding $[-L,L]^2$ in the annulus $A^a(L,2L)$ with each vertex in the circuit connected to $g$ via $A^a(L,2L)$; see Figure \ref{fig:G}.
\end{definition}
Denote the complement of $G(a,L)$ by $G^{comp}(a,L)$. The following proposition shows that the probabilities of $F(a,L)$ and $G^{comp}(a,L)$ decay exponentially.

\begin{proposition}\label{propFK}
For any $h>0$, there exist $\epsilon_0=\epsilon_0(h)\in (0,\infty)$ and $N_1=N_1(h)\in[2,\infty)$ such that for all $a\leq \epsilon_0$, $L\geq N_1$, for any boundary condition $\rho_1$ on $A^a(1,L)$ and any boundary condition $\rho_2$ on $A^a(L,2L)$, we have
\begin{eqnarray}
&&\mathbb{P}^a_{A(1,L),\rho_1,h}\left(F(a,L)\right)\leq e^{-C_1(h)L}\label{eqFK1}\\
&&\mathbb{P}^a_{A(L,2L),\rho_2,h}\left(G^{comp}(a,L)\right)\leq e^{-C_1(h)L}\label{eqFK2}
\end{eqnarray}
where $C_1(h)\in (0,\infty)$ only depends on h.
\end{proposition}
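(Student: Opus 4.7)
The plan is to reduce both bounds to a single key lemma asserting a uniform lower bound on the probability of a \emph{good circuit}---a nearest-neighbor circuit of vertices in some sub-region, each connected to $g$ through that sub-region---in constant-scale sub-pieces of the respective big annulus, and then iterate via the FK domain Markov property and monotonicity in boundary conditions.

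For \eqref{eqFK2}, a good circuit in any sub-annulus of $A^a(L,2L)$ is, a fortiori, a circuit realizing $G(a,L)$. For \eqref{eqFK1}, I first observe that $F(a,L)$ is incompatible with the existence of a good circuit in any sub-annulus $S\subset A^a(1,L)$: by planar topology, an FK-open path from $\partial_1 A^a(1,L)$ to $\partial_2 A^a(1,L)$ must share at least one vertex $v$ with every nearest-neighbor circuit in $A^a(1,L)$ surrounding $\Lambda_1$; if the circuit is good with respect to $S$, then $v$ is connected to $g$ via $S\subset A^a(1,L)$, contradicting the defining property of $F(a,L)$. Consequently both $G^{comp}(a,L)$ and $F(a,L)$ lie inside $\bigcap_i G_i^{comp}$ for any partition $\{S_i\}$ of the big annulus into concentric pieces, where $G_i$ is the good-circuit event in $S_i$.

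The key lemma is: for a sub-piece $S$ of bounded aspect ratio and any boundary condition $\rho$, for $h$ large and $a$ small,
\[
\mathbb{P}^a_{S,\rho,h}(\text{good circuit in } S)\;\geq\;p(h)\;>\;0,
\]
uniformly in $a$ and the position of $S$. By FK monotonicity in boundary conditions, it suffices to take $\rho$ free. Proposition \ref{propFKcon} then gives that, conditional on the $h=0$ FK configuration $\omega$ in $S$, each cluster $\mathcal{C}$ of $\omega$ is independently connected to $g$ with probability $\tanh(ha^{15/8}|\mathcal{C}|)$. An RSW-type argument for critical FK-Ising combined with the coupled CLE$_{16/3}$ loop and measure ensemble of \cite{CCK17} yields, with probability $\geq p_0>0$ uniform in $a$, a ``necklace'' of at most $k_0$ touching FK-open clusters in $S$ whose union carries a nearest-neighbor circuit surrounding the inner boundary of $S$, each constituent cluster satisfying $|\mathcal{C}|\geq\epsilon_0 a^{-15/8}$ (equivalently, having positive mass in the continuum area measure). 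For such clusters $\tanh(ha^{15/8}|\mathcal{C}|)\geq\tanh(h\epsilon_0)$, and by the conditional independence in Proposition \ref{propFKcon} the joint connection of all $\leq k_0$ necklace clusters to $g$ has probability $\geq\tanh(h\epsilon_0)^{k_0}$. Hence $p(h)\geq p_0\tanh(h\epsilon_0)^{k_0}$, bounded away from $0$ for $h$ large.

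Given the key lemma, the proposition follows by iteration. Partition the big annulus into $n\asymp L$ disjoint concentric sub-pieces $S_1,\ldots,S_n$ ordered from outside to inside, and let $\mathcal{F}_i$ be the $\sigma$-field generated by all edges (internal and ghost) with no endpoint in $S_i$. Each $G_i$ is increasing and is determined by the complementary edges, and $\bigcap_{j<i}G_j^{comp}$ is $\mathcal{F}_i$-measurable by the outside-in ordering. The FK domain Markov property combined with monotonicity in boundary conditions (free is the worst boundary condition for the increasing event $G_i$) gives
\[
\mathbb{P}(G_i\mid\mathcal{F}_i)\;\geq\;\mathbb{P}^a_{S_i,f,h}(G_i)\;\geq\;p(h)
\]
almost surely under $\mathbb{P}^a_{A,\rho,h}$ (with $(A,\rho)$ being $(A(1,L),\rho_1)$ or $(A(L,2L),\rho_2)$ as appropriate), so by sequential conditioning
\[
\mathbb{P}^a_{A,\rho,h}\!\left(\bigcap_{i=1}^n G_i^{comp}\right)\;\leq\;(1-p(h))^n\;\leq\;e^{-C_1(h)L}.
\]
The principal technical obstacle is the cluster-size lower bound $|\mathcal{C}|\geq\epsilon_0 a^{-15/8}$ in the key lemma: this is precisely where the coupled loop and measure ensembles of \cite{CCK17} are essential, providing the uniform-in-$a$ quantification of macroscopicness (positive area in the scaling limit) that is then converted, via Proposition \ref{propFKcon}, into uniform connection to $g$ for large $h$.
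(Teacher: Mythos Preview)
Your key lemma and its proof sketch---a necklace of macroscopic FK clusters via the CLE$_{16/3}$/measure-ensemble coupling of \cite{CCK17}, then connection of each cluster to $g$ via Proposition~\ref{propFKcon}---match the paper's Lemmas~\ref{lemcle}--\ref{lemlss}, and that part is fine. The gap is in the iteration. You want to partition the big annulus into $n\asymp L$ disjoint \emph{concentric} sub-annuli, each of bounded aspect ratio, but this is geometrically impossible. In $A(L,2L)$ every concentric sub-annulus surrounding $\Lambda_L$ has inner radius $\geq L$; if its aspect ratio is bounded away from~$1$, its width is $\geq cL$, so only $O(1)$ such pieces fit and your product bound yields merely $(1-p(h))^{O(1)}$, not exponential decay in $L$. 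In $A(1,L)$ the situation is only marginally better: dyadic annuli give $O(\log L)$ pieces, hence only polynomial decay. If instead you take width-$O(1)$ concentric shells to get $n\asymp L$ of them, then the outer shells (and, for $A(L,2L)$, \emph{all} shells) have aspect ratio tending to~$1$, and the circuit probability in such thin annuli is not bounded below uniformly---your key lemma no longer applies there.

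The paper circumvents this by coarse-graining two-dimensionally rather than radially. One places a translate of the fixed unit-scale annulus $A_{1,3}$ at every site $z\in\mathbb{Z}^2$, sets $Y_z=1$ if that translate is good, and observes that $F(a,L)$ forces a nearest-neighbor $\mathbb{Z}^2$-path of sites with $Y_z=0$ from the origin to distance $\sim L$, while $G^{comp}(a,L)$ forces a $*$-path of such sites across the width-$L$ annulus. Liggett--Schonmann--Stacey domination of $\{Y_z\}$ by a highly supercritical i.i.d.\ Bernoulli field, followed by the standard Peierls/subcritical-percolation bound on long closed paths, then gives the exponential decay. The essential point is that a bad path must traverse $\asymp L$ unit-scale boxes, a count that a one-dimensional stack of concentric shells cannot deliver.
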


Before proving Proposition \ref{propFK}, we state and prove several lemmas. The first gives a useful property of CLE$_{16/3}$ and its related conformal measure ensemble; the idea of such coupled loop and measure ensembles originated in \cite{CN09}. Let $\Lambda_{3,1}:=[0,3]\times[0,1]$ and $\Lambda^a_{3,1}$ be its $a$-approximation. By Theorem \ref{thmconvcle} in Appendix \upperRomannumeral{1}, in the scaling limit $a\downarrow 0$, the loop ensemble of boundaries of ($h=0$) FK-open clusters in $\Lambda_{3,1}^a$ with free boundary condition converges in distribution to CLE$_{16/3}$ in $\Lambda_{3,1}$. From Theorem \ref{thmconvcme}, we know that the joint law of the collection of boundaries of FK-open clusters and the collection of normalized counting measures (with normalization $a^{15/8}$) of the FK-open clusters converges in distribution, in the same limit $a\downarrow 0$, to the joint law of CLE$_{16/3}$ and a collection of limiting counting measures (a conformal measure ensemble). Let $\mathbb{P}_{\Lambda_{3,1}}$ denote the latter joint law (i.e., in the scaling limit).

\begin{lemma}\label{lemcle}
Let $\mathbb{P}_{\Lambda_{3,1}}$ be the joint law of nested CLE$_{16/3}$ and CME$_{16/3}$ in $\Lambda_{3,1}$ with free boundary condition. Let $E(K; \eta)$ for $K\in\mathbb{N}$ and $\eta>0$ be the event that there is a sequence of $K$ or fewer loops (say, $L_1, \dots, L_k$ with $k\leq K$) such that the total mass of the limiting counting measure corresponding to $L_i$ is $\geq \eta$ for each $i$ and
\begin{equation}\label{eqCLE}
\mathrm{dist}(L_1,\{0\}\times[0,1])=0,\mathrm{dist}(L_i,L_{i+1})=0 \text{ for each } 1\leq i\leq k-1, \mathrm{dist}(L_k,\{3\}\times[0,1])=0,
\end{equation}
where $\mathrm{dist}(\cdot,\cdot)$ denotes Euclidean distance; see Figure \ref{fig:E(K)}. Then there is a choice of $\eta_K\rightarrow 0$ such that
\[\lim_{K\rightarrow\infty}\mathbb{P}_{\Lambda_{3,1}}(E(K; \eta_K))=1.\]
\end{lemma}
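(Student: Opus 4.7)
The plan is to approximate the event $E(K)$ by events in the critical ($h=0$) FK-Ising model on $\Lambda^a_{3,1}$ with free boundary, and then pass to the scaling limit using the joint convergence of cluster interfaces and normalized counting measures in Theorem 8.2 of \cite{CCK17}. Since $E(K)$ is increasing in $K$, by continuity of measure it suffices to show $\mathbb{P}_{\Lambda_{3,1}}(E^\ast)=1$, where $E^\ast:=\bigcup_K E(K)$ is the event that \emph{some} finite chain of positive-mass touching loops realizes the connection between $\{0\}\times[0,1]$ and $\{3\}\times[0,1]$.

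The discrete analog $E^{a,\delta}$ would be: in the FK-Ising configuration on $\Lambda^a_{3,1}$ there exist FK-open clusters $\mathcal{C}_1,\ldots,\mathcal{C}_k$, each of Euclidean diameter at least $\delta$, such that $\mathcal{C}_i$ and $\mathcal{C}_{i+1}$ share a pair of adjacent vertices, $\mathcal{C}_1$ meets the $\delta$-neighborhood of $\{0\}\times[0,1]$, and $\mathcal{C}_k$ meets the $\delta$-neighborhood of $\{3\}\times[0,1]$. By Lemma \ref{lem1arm} together with a standard concentration argument, any FK cluster of diameter $\geq\delta$ has normalized mass $\geq c\delta^{15/8}$ except on an event of probability $\eta(\delta)\downarrow 0$. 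Since $E^{a,\delta}$ becomes (in the scaling limit $a\downarrow 0$) an open condition on the joint loop/measure realization, the Portmanteau theorem yields
\[
\mathbb{P}_{\Lambda_{3,1}}(E^\ast)\;\geq\;\liminf_{\delta\downarrow 0}\;\liminf_{a\downarrow 0}\;\mathbb{P}^a_{\Lambda_{3,1},f,0}\bigl(E^{a,\delta}\bigr),
\]
and the task reduces to showing the right-hand side equals $1$.

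For the probabilistic estimate I would cover $\Lambda_{3,1}$ by a coarse grid of $O(\delta^{-2})$ cells of side $\delta$ and, using RSW box-crossing estimates for critical FK-Ising (e.g.\ those of \cite{DCHN11}), show that each cell contains an FK-open crossing cluster of diameter $\geq\delta$ with probability bounded below uniformly in $a$. Crossing clusters in horizontally adjacent cells automatically touch at their shared boundary, which produces the desired chain.

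The main obstacle is converting these per-cell RSW lower bounds into a global high-probability statement, because a direct FKG combination over $O(\delta^{-2})$ cells gives only $p_0^{O(\delta^{-2})}\to 0$. To get around this I would combine RSW with quasi-multiplicativity and the arm exponents for critical FK-Ising to show that the event of having no macroscopic crossing cluster in a given macroscopic sub-rectangle has polynomially small probability in $\delta$, and then apply a union bound. An alternative is to work directly at the continuum level using density properties of CLE$_{16/3}$ loops and the touching structure specific to $\kappa\in(4,8)$, extracting a finite chain of touching, positive-mass loops via a compactness argument along, say, a horizontal line in $\Lambda_{3,1}$.
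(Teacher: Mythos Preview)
Your proposal has a genuine gap, and the route you sketch is essentially the reverse of what the paper does. The obstacle you yourself flag---that a direct FKG combination of per-cell RSW lower bounds over $O(\delta^{-2})$ cells gives only $p_0^{O(\delta^{-2})}\to 0$---is real, and neither of your suggested workarounds closes it. The claim that ``having no macroscopic crossing cluster in a given macroscopic sub-rectangle has polynomially small probability in $\delta$'' is not correct as stated: RSW for critical FK--Ising pins crossing probabilities strictly between $0$ and $1$ uniformly in the mesh, so the complementary event does not decay. There is also a topological issue with the Portmanteau step: the touching condition $\mathrm{dist}(L_i,L_{i+1})=0$ is a closed, not open, condition on loop configurations, so the inequality you write does not follow from ``$E^{a,\delta}$ becomes an open condition.'' Finally, the logic is inverted relative to the paper: here Lemma~\ref{lemcle} is proved purely in the continuum and then \emph{used} to obtain the discrete Lemma~\ref{lemnec1}, not the other way around.

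The paper's argument is the ``alternative'' you mention at the end but do not develop, and it uses a specific mechanism rather than compactness along a line. One conformally maps $\Lambda_{3,1}$ to $\mathbb{D}$ and lets $L^\ast$ be the outermost CLE$_{16/3}$ loop around $0$. Starting from the image $\Gamma_1$ of the left side, let $O_1$ be the union of all loops touching $\Gamma_1$; if $L^\ast$ is among them, stop, otherwise pass to the component $D_1$ of $\mathbb{D}\setminus O_1$ containing $0$ and iterate. The conformal radius of $D_n$ from $0$ is stochastically dominated by a product of $n$ i.i.d.\ copies of $\rho_1<1$ (with positive probability), while the conformal radius of the interior of $L^\ast$ is a.s.\ positive; hence $L^\ast$ is reached in finitely many steps almost surely. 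Doing the same from the image of the right side and concatenating through $L^\ast$ yields the finite chain of touching loops. Positivity of the associated counting-measure masses is then read off from \cite{CCK17}. This argument, adapted from Lemma~9.3 of \cite{SW12}, avoids entirely the cell-combination problem you encountered.
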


\begin{figure}
\begin{center}
\includegraphics[width=0.8\textwidth]{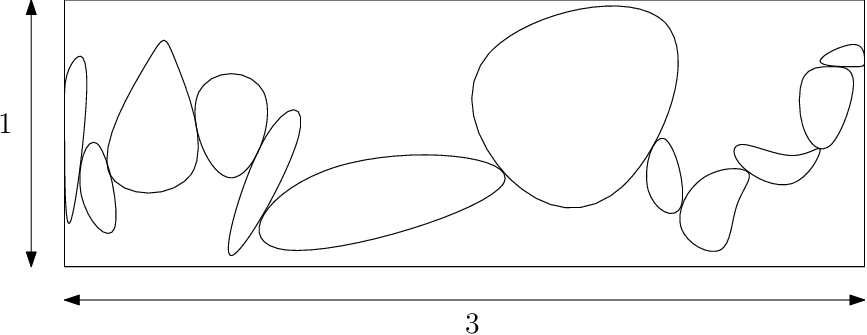}
\caption{An illustration of the event $E(K)$.}\label{fig:E(K)}
\end{center}
\end{figure}

\begin{proof}
Let $\psi:\Lambda_{3,1}\rightarrow \mathbb{D}:=\{z:|z|\leq 1\}$ be the conformal map with $\psi\left((3/2,1/2)\right)=0$ and $\psi^{\prime}\left((3/2,1/2)\right)>0$. Let $\Gamma_1:=\psi(\{0\}\times[0,1])$ and $\Gamma_2:=\psi(\{3\}\times [0,1])$. We first prove that, with probability $1$, CLE$_{16/3}$ in $\mathbb{D}$ contains a finite sequence of loops, $L_1, \ldots, L_k$, such that
\begin{equation}\label{eqCLE3}
\mathrm{dist}(L_1,\Gamma_1)=0,\mathrm{dist}(L_i,L_{i+1})=0 \text{ for any } 1\leq i\leq k-1, \mathrm{dist}(L_k,\Gamma_2)=0.
\end{equation}
Then, the conformal invariance of CLE$_{16/3}$ implies that a finite sequence satisfying \eqref{eqCLE} exists in $\Lambda_{3,1}$ with $\mathbb{P}_{\Lambda_{3,1}}$-probability $1$.

Our argument is inspired by the proof of Lemma 9.3 in \cite{SW12}. Let $L^*$ be the outermost loop containing $0$, and let $D^*$ be the connected component of $\mathbb{D} \setminus L^*$ containing $0$. Let $O_1$ (respectively, $\mathcal{O}_1$) be the union (respectively, collection) of all loops that touch $\Gamma_1$, then clearly $O_1\neq \emptyset$ with probability $1$. If $L^*\in \mathcal{O}_1$, then we stop; otherwise we let $D_1$ be the connected component of $\mathbb{D}\setminus O_1$ containing $0$. In this case, the conformal radius $\rho_1$ of $D_1$ seen from $0$ has a strictly positive probability to be strictly smaller than $1$, and the harmonic measure of $\partial_1 := O_1\cup \Gamma_1$ from $0$ in $\mathbb{D}$ is not smaller than the harmonic measure of $\Gamma_1$ in $\mathbb{D}$ from $0$. We now consider the CLE$_{16/3}$ in $D_1$, and we let $O_2$ (respectively, $\mathcal{O}_2$) be the union (respectively, collection) of all loops that touch $\partial_1$. If $L^*\in \mathcal{O}_2$, then we stop; otherwise we let $D_2$ be the connected component of $D_1\setminus O_2$ containing $0$, and we interate the procedure. After $i$ steps, the conformal radius $\rho_i$ of $D_i$ seen from $0$ is stochastically smaller than a product of $n$ i.i.d. copies of $\rho_1$. Since the conformal radius of $D^*$ from $0$ is strictly positive with probability $1$, this shows that, with probability $1$, $L^*$ is reached in a finite number of steps. Hence, there exists almost surely a finite sequence of loops $L_1,\dots,L_n$ (with $L_i\in \mathcal{O}_i$ for each $i<n$) such that
\[\mathrm{dist}(L_1,\Gamma_1)=0,\mathrm{dist}(L_i,L_{i+1})=0 \text{ for any } 1\leq i\leq n-1, L_n=L^*.\]

By the same argument, one can find a finite sequence of loops (say, $L_1^{\prime}, \dots, L^{\prime}_j$) such that
\[\mathrm{dist}(L^{\prime}_1,\Gamma_2)=0,\mathrm{dist}(L^{\prime}_i,L^{\prime}_{i+1})=0 \text{ for any } 1\leq i\leq j-1, L^{\prime}_j=L^*.\]
The sequence of loops $L_1, \dots, L_{n-1}, L^*, L^{\prime}_{j-1}, \dots, L_1^{\prime}$ satisfies \eqref{eqCLE3} with $k=n+j-1$, and the proof is concluded by noting that the mass of each limiting counting measure associated to a loop in that sequence is almost surely strictly positive (see Corollary~\ref{corpostmass} in Appendix \upperRomannumeral{1}).
\end{proof}

\begin{remark}
It is clear that in Lemma \ref{lemcle} without loss of generality  we may take $\eta_K=1/K$ which we henceforth do and then define the event $E(K):=E(K;1/K)$.
\end{remark}

For $N\in\mathbb{N}$, let $\Lambda_{3N,N}:=[0,3N]\times[0,N]$ and $\Lambda^a_{3N,N}$ be its $a$-approximation. By the conformal invariance of CLE$_{16/3}$, the conformal covariance of the limiting counting measures \cite{CCK17}, and Lemma \ref{lemcle}, we have
\begin{lemma}\label{lemcle2}
For $N\in\mathbb{N}$, let $\mathbb{P}_{\Lambda_{3N,N}}$ be the joint law of nested CLE$_{16/3}$ and CME$_{16/3}$ in $\Lambda_{3N,N}$ with free boundary condition. Let $E(K,N)$ be the event that there is a sequence of $K$ or fewer loops (say, $L_1, \ldots, L_k$ with $k\leq K$) such that the total mass of the limiting counting measure corresponding to $L_i$ is $\geq N^{15/8}/K$ for each $i$ and
\begin{eqnarray*}
&\mathrm{dist}(L_1,\{0\}\times[0,N])=0,\mathrm{dist}(L_i,L_{i+1})=0 \text{ for each } 1\leq i\leq k-1,\\ &\mathrm{dist}(L_k,\{3N\}\times[0,N])=0.
\end{eqnarray*}
Then for any $\epsilon>0$, there exists $K(\epsilon)<\infty$ such that
\[\mathbb{P}_{\Lambda_{3N,N}}(E(K,N))>1-\epsilon \text{ for all }K\geq K(\epsilon).\]
\end{lemma}
\begin{proof}
Using the conformal Markov property of CLE, CLE$_{16/3}$ in $\Lambda_{3,1}$ can be obtained from a (nested) full plane CLE$_{16/3}$ as follows. Consider the outermost loop $L$ in the unit disc $\mathbb D$ surrounding the origin and let $D_0$ denote the connected component of $\mathbb D \setminus L$ containing the origin. Conditioned on $L$, the loops inside $D_0$ are distributed like a CLE$_{16/3}$ in $D_0$. Therefore, CLE$_{16/3}$ inside $\Lambda_{3,1}$ can be obtained from CLE$_{16/3}$ inside $D_0$ by a conformal map from $D_0$ to $\Lambda_{3,1}$. Together with the measurability of the limiting counting measures with respect to the CLE loops (Corollary \ref{cormeas} in Appendix \upperRomannumeral{1}), this shows that the limiting counting measures inside $\Lambda_{3,1}$ scale like the full plane versions, so that one can apply Theorem 2.4 of \cite{CCK17}. The lemma now follows immediately from Lemma 5 by considering a scale transformation from $\Lambda_{3,1}$ to $\Lambda_{3N,N}$.
\end{proof}

The next lemma says that on $a\mathbb{Z}^2$, with high probability, we can find (for $h=0$) a finite sequence of FK clusters in $\Lambda_{3N,N}$ whose concatenation almost forms an open crossing of $\Lambda_{3N,N}$ in the horizontal direction.
\begin{lemma}\label{lemnec1}
For $N\in\mathbb{N}$, let $E^a(K,N)$ be the event that there exists a sequence $\mathcal{C}_1,\ldots,\mathcal{C}_k$ of FK-open clusters in $\Lambda_{3N,N}^a$ such that $k\leq K$, $|\mathcal{C}_i|\geq N^{15/8}a^{-15/8}/K$ for each $i$ and
\begin{eqnarray*}
&\mathrm{dist}(\mathcal{C}_1,\{0\}\times[0,N])\leq a, \mathrm{dist}(\mathcal{C}_i,\mathcal{C}_{i+1})=a \text{ for every } 1\leq i\leq k-1,\\ &\mathrm{dist}(\mathcal{C}_k,\{3N\}\times[0,N])\leq a.
\end{eqnarray*}
Then for any $\epsilon>0$, there exists $K(\epsilon)<\infty$ such that
\[\liminf_{a\downarrow 0}\mathbb{P}^a_{\Lambda_{3N,N},f,0}(E^a(K,N))>1-\epsilon, \text{ for all }K\geq K(\epsilon).\]
\end{lemma}
\begin{proof}
Let $L_2$ and $L_3$ be distinct CLE$_{16/3}$ loops inside $\Lambda_{3N,N}$ such that $\mathrm{dist}(L_2,L_3)=0$. Because of the convergence of the collection of the lattice boundaries of critical FK clusters to CLE$_{16/3}$ (Theorem \ref{thmconvcle}), there is a coupling between FK percolation in $\Lambda_{3N,N}$ and CLE$_{16/3}$ such that the pair $(L_2^a,L_3^a)$ of lattice boundaries of two FK-open clusters converges a.s. to $(L_2,L_3)$. Under this coupling, we claim that the probability of $\mathrm{dist}(L_2^a,L_3^a)\leq a$ tends to $1$ as $a\downarrow 0$.
%Then we claim
%\begin{equation}\label{eqCLE1}
%\lim_{a\downarrow 0}\mathbb{P}^a_{\Lambda_{3,1},f,0}(\mathrm{dist}(L_2^a,L_3^a)=a)=1.
%\end{equation}
Indeed, it is easy to see that if $\mathrm{dist}(L_1^a,L_2^a) > a$, then there is a 6-arm event of type $(100100)$ (see page 4 of \cite{CDCH16} for the precise definition of this event). But by Theorem \ref{thm6arm}, the critical exponent for a 6-arm event of type $(100100)$ is strictly larger than $2$. It follows (see, e.g. the proof of Lemma~6.1 of \cite{CN06}) that the probability of seeing a 6-arm event anywhere goes to $0$ as $a\downarrow 0$. This completes the proof of the claim.

By a similar argument, using the fact that the exponent for a 3-arm event near a boundary is strictly larger than 1 (Corollary 1.5 of \cite{CDCH16}) and hence they do not occur as $a\downarrow 0$, one can prove that, if $L_1$ is a loop such that $\mathrm{dist}(L_1,\{0\}\times[0,N])=0$, then there is a coupling between FK percolation and CLE$_{16/3}$ in $\Lambda_{3N,N}$ such that the FK lattice boundary $L_1^a$ converges a.s. to $L_1$ and also that the probability that $\mathrm{dist}(L_1^a,\{0\}\times[0,N])\leq a$ tends to $1$.
%\begin{equation}\label{eqCLE2}
%\lim_{a\downarrow 0}\mathbb{P}^a_{\Lambda_{3,1},f,0}(\mathrm{dist}(L_1^a,\{0\}\times[0,1])\leq a)=1.
%\end{equation}
Combining this and the previous claim with Theorem \ref{thmconvcme} and with Lemma \ref{lemcle2} above completes the proof of the lemma.
%\[\lim_{a\downarrow 0}\mathbb{P}^a_{\Lambda_{3,1},f,0}(E^a(K))=\mathbb{P}_{\Lambda_{3,1}}(E(K)).\]
\end{proof}

Let $\Lambda_{3N,3N}:=[0,3N]\times[0,3N]$ and $A_{N,3N}$ be the annulus $\Lambda_{3N,3N}\setminus[N,2N]^2$, and let $\Lambda_{3N,3N}^a$ and $A_{N,3N}^a$ be their $a$-approximations respectively. Let $\mathcal{N}^a(K,N)$ be the event that there is a necklace consisting of open clusters in $A_{N,3N}^a$ surrounding $[N,2N]^2$. More precisely, $\mathcal{N}^a(K,N)$ is the event that there is a sequence of FK-open clusters in $A_{N,3N}^a$ (say $\mathcal{C}_1,\ldots,\mathcal{C}_k$ with $k\leq K$) such that
\begin{eqnarray*}
&\mathrm{dist}(\mathcal{C}_i,\mathcal{C}_{i+1})=a \text{ for each } 1\leq i\leq k-1, \mathrm{dist}(\mathcal{C}_k,\mathcal{C}_1)= a, \\
&|\mathcal{C}_i|\geq N^{15/8}a^{-15/8}/K \text{ for each } 1\leq i \leq k,
\end{eqnarray*}
and there is a circuit of vertices in $\cup_{i=1}^k \mathcal{C}_i$ surrounding $[N,2N]^2$. Then we have
\begin{lemma}\label{lemnec2}
For any $N\in\mathbb{N}$ and $\epsilon>0$, there exists $K_1(\epsilon)<\infty$ such that
\[\liminf_{a\downarrow 0}\mathbb{P}^a_{\Lambda_{3N,3N},f,0}(\mathcal{N}^a(K,N))>1-\epsilon, \text{ for all }K\geq K_1(\epsilon).\]
\end{lemma}
\begin{proof}
We use a standard argument in the percolation literature --- see, e.g., Figure 3 in \cite{BC10} --- as follows. It is easy to show that $\mathcal{N}^a(K,N)$ contains the intersection of four events which are rotated and/or translated versions of $E^a(K/4,N)$. Note that $E^a(K/4,N)$ is an increasing event. So the lemma follows from the FKG inequality and Lemma \ref{lemnec1}.
\end{proof}

Next, we consider FK percolation with external field $a^{15/8}h$. We say $A_{N,3N}^a$ is \textit{good} if there is a sequence of open clusters in $A_{N,3N}^a$ (say $\mathcal{C}_1,\ldots,\mathcal{C}_k$ for some $k\in\mathbb{N}$) such that
\[\mathrm{dist}(\mathcal{C}_i,\mathcal{C}_{i+1})=a \text{ for each } 1\leq i\leq k-1, \mathrm{dist}(\mathcal{C}_k,\mathcal{C}_1)= a, \mathcal{C}_i\longleftrightarrow g \text{ for each }i\]
and there is a circuit of vertices in $\cup_{i=1}^k \mathcal{C}_i$ surrounding $[N,2N]^2$.
\begin{lemma}\label{lemlss}
Given any $h>0$ and $\epsilon>0$, there exist $N_0\in[1,\infty)$ and $\epsilon_0\in(0,\infty)$ such that for $N\geq N_0$ and $a\leq \epsilon_0$,
\[\mathbb{P}^a_{\Lambda_{3N,3N},f,h}(A_{N,3N}^a \mathrm{~is~good})\geq 1-\epsilon.\]
\end{lemma}
\begin{proof}
For any fixed $\epsilon>0$, by Lemma \ref{lemnec2}, there exist $K_0,\epsilon_0>0$ such that
\[\mathbb{P}^a_{\Lambda_{3N,3N},f,0}(\mathcal{N}^a(K_0,N))>1-\epsilon/2 \text{ for all } a\leq\epsilon_0, N\in\mathbb{N}.\]
So, by Lemma \ref{lem3},
\begin{equation}
\mathbb{P}^a_{\Lambda_{3N,3N},f,h}(\mathcal{N}^a(K_0,N))>1-\epsilon/2 \text{ for all } a\leq\epsilon_0, N\in\mathbb{N}.\label{eqNaK}
\end{equation}
Lemma \ref{lemtanh} implies that for each $\mathcal{C}_i$ from the definition of $\mathcal{N}^a(K)$,
\[\mathbb{P}^a_{\Lambda_{3N,3N},f,h}\left(\mathcal{C}_i\longleftrightarrow g|\mathcal{N}^a\left(K_0,N\right)\right)=\tanh(ha^{15/8}|\mathcal{C}_i|)\geq\tanh(hN^{15/8}/K_0).\]
Therefore,
\begin{align*}
&\mathbb{P}^a_{\Lambda_{3N,3N},f,h}(A_{N,3N}^a \text{ is good })\\
&\geq \mathbb{P}^a_{\Lambda_{3N,3N},f,h}(\mathcal{C}_i\longleftrightarrow g \text{ for each } i|\mathcal{N}^a\left(K_0,N\right))\mathbb{P}^a_{\Lambda_{3,3},f,h}(\mathcal{N}^a(K_0,N))\\
&\geq(\tanh(hN^{15/8}/K_0))^{K_0}(1-\epsilon/2)\geq 1-\epsilon \text{ if } a\leq \epsilon_0 \text{ and }N \text { is large},
\end{align*}
where the second inequality follows from Lemma \ref{lemtanh} and \eqref{eqNaK}.
\end{proof}

We are ready to prove Proposition \ref{propFK}. Our argument is similar to ones appearing elsewhere in the percolation literature --- see, e.g., the proof of Lemma 5.3 in \cite{BC10}.
\begin{proof}[Proof of Proposition \ref{propFK}]
We first consider FK percolation on $a\mathbb{Z}^2$. For each $z=(z_1,z_2)\in \mathbb{Z}^2$, let
\[A_{N,3N}(z):=N\times(z_1-3/2,z_2-3/2)+A_{N,3N}\]
and $A_{N,3N}^a(z)$ be its $a$-approximation. We define whether $A_{N,3N}^a(z)$ is good (or not) by the translation of the definition for $A_{N,3N}^a$ and then define a family of random variables $\{Y_z, z\in\mathbb{Z}^2\}$ such that $Y_z=1$ if $A_{N,3N}^a(z)$ is good and $Y_z=0$ otherwise. Note that the worst boundary condition for the event $\{A_{N,3N}^a \text{ is good}\}$ is the free boundary condition on the boundary of $\Lambda_{3N,3N}^a$. Then by Theorem 0.0 of \cite{LSS97} and Lemma \ref{lemlss}, $\{Y_z, z\in\mathbb{Z}^2\}$ stochastically dominates a family of i.i.d. random variables $\{Z_z, z\in\mathbb{Z}^2\}$ such that $P(Z_z=1)=\pi(\epsilon_0,N_0)$ and $P(Z_z=0)=1-\pi(\epsilon_0,N_0)$ where $\pi(\epsilon_0,N_0)$ can be made arbitrarily close to $1$ by choosing $\epsilon_0$ small and $N_0$ large.

We note that if $A_{N,3N}^a$ is good then there is a circuit of vertices surrounding $[N,2N]^2$ in $A_{N,3N}^a$ with each vertex in this circuit connected to $g$ in $A_{N,3N}^a$. Such a circuit prevents the existence of an FK-open path from the inner boundary $\partial_1 A_{N,3N}^a$ to the outer boundary $\partial_2 A_{N,3N}^a$ whose cluster does not contain $g$. This means that, whenever $Y_z=1$, there is no such FK-open path from $\partial_1A_{N,3N}^a(z)$ to $\partial_2 A_{N,3N}^a(z)$ whose cluster does not contain $g$. But whenever $F(a,L)$ occurs and $N\geq 2$, there is a nearest neighbor path (say $\gamma$) on $\mathbb{Z}^2$ starting at $0$ and reaching at least distance $L/N$ away from $0$ such that $Y_z=0$ for each $z\in \gamma$. Pick $\epsilon_0>0$ and $N_0\geq 2$ such that $\pi(\epsilon_0,N_0)$ is larger than the critical probability of site percolation on $\mathbb{Z}^2$. Note that $\epsilon_0$ and $N_0$ only depend on $h$. We fix $N=N_1=N_0$ in the rest of the proof of \eqref{eqFK1}. Then Theorem 6.75 of \cite{Gri99} (actually that theorem is for bond percolation but the proof also applies to site percolation) implies that there exists a finite constant $\tilde{C}_1(h)$ such that
\[\mathbb{P}^a_{A(1,L),\rho_1,h}(F(a,L))\leq e^{-\tilde{C}_1(h)L/N_0}=e^{-C_1(h)L}.\]

If $G^{comp}(a,L)$ occurs, then there is a $*$-path (i.e., one that can use both nearest neighbor and diagonal edges) from $\partial_1 A^a(L,2L)$ to $\partial_2 A^a(L,2L)$ such that each vertex in this path is not connected via $A^a(L,2L)$ to $g$. We note that if $A_{N,3N}^a$ is good then there is no such $*$-path (with the cluster of each vertex on the path not containing $g$) from the inner box to the outer boundary of $A_{N,3N}^a$. The rest of the proof of \eqref{eqFK2} is similar to that of \eqref{eqFK1} except that here we take $N_1(h)>N_0(h)$ in order to avoid a prefactor in \eqref{eqFK2}.
\end{proof}

\subsection{Exponential decay of $\langle \sigma_x; \sigma_y \rangle$}
Our goal in this subsection is to show the following
\begin{proposition}\label{propdisexp}
For any $h>0$, there exists $\epsilon_0=\epsilon_0(h)\in(0,1]$ such that for all  $a\leq \epsilon_0$
\begin{eqnarray*}
\langle \sigma_x;\sigma_y\rangle_{a,h}\leq C_4 a^{1/4} e^{-m_1(h)|x-y|} \text{ whenever }|x-y|>K_0(h) \text{ and }x,y\in a\mathbb{Z}^2,
\end{eqnarray*}
where $C_4\in(0,\infty)$ is universal and $m_1(h), K_0(h)\in(0,\infty)$ only depend on h.
\end{proposition}

Although we do not use it in our current proof, there is a nice BK-type inequality for Ising variables \cite{BG13} which can at least give partial results on exponential decay; perhaps a more careful use would give complete results.

Let $B(z,L):=z+\Lambda_L$ for $z\in\mathbb{R}^2$ and $L>0$ denote the square centered at $z$ (parallel to the coordinate axes) of side length $2L$. Recall that $P_h^a$ is the infinite volume measure for the Ising model on $a\mathbb{Z}^2$ at critical inverse temperature $\beta_c$ with external field $a^{15/8}h$. Let $P^a_{\vec{h}}$ be the same infinite volume measure except that the external field is $0$ in $B(x,1)\cup B(y,1)$. Let $\langle \cdot\rangle_{a,\vec{h}}$ be the expectation with respect to $P_{\vec{h}}^a$, and $\mathbb{P}^a_{\vec{h}}$ be the corresponding FK percolation measure.

For the rest of this section, for simplicity we assume $x,y\in a\mathbb{Z}^2$ are on the $x$-axis; otherwise one has to slightly modify choices of lengths of some squares by factors of $1/\sqrt{2}$. For ease of notation, we also suppress the superscript $a$ on various events defined below ($A^0, A_z^1, A_z^c, A_z^f$) even though these are all defined in the $a\mathbb{Z}^2$ setting; we keep the superscript $a$ in the various probability measures, such as $\mathbb{P}^a_{\vec{h}}$.

To bound $\langle \sigma_x;\sigma_y\rangle_{a,h}$, we first use the GHS inequality \cite{GHS70} to see that
\[\langle \sigma_x; \sigma_y\rangle_{a,h}\leq\langle \sigma_x; \sigma_y\rangle_{a,\vec{h}}.\]

Let $A^0:=\{x\longleftrightarrow y\centernot\longleftrightarrow g\}$, $A_z^1:=\{z\longleftrightarrow g\}$ for $z=x$ or $y$. Then the Edwards-Sokal coupling (like in Lemma \ref{lemES}) gives
\[\langle \sigma_x; \sigma_y\rangle_{a,\vec{h}}=\mathbb{P}^a_{\vec{h}}(A^0)+\mathbb{P}^a_{\vec{h}}(A^1_x\cap A^1_y)-\mathbb{P}^a_{\vec{h}}(A^1_x)\mathbb{P}^a_{\vec{h}}(A^1_y).\]
Now write $A^1_z$ for $z=x$ or $y$ as the disjoint partition $A^1_z=A^c_z\cup A^f_z$ ($c$ for close, $f$ for far) where
\begin{align*}
A^c_z:=\{&\text{there exists an FK-open path from } z, \text{ within }B(z,|x-y|/3), \text { to some } \\
&w \text{ with the edge from } w \text{ to } g \text{ open}  \}
\end{align*}
and $A^f_z:=A^1_z\setminus A^c_z$. Then we arrive at the following lemma.
\begin{lemma}\label{lemGHS}
\begin{equation}\label{eqcov}
\langle \sigma_x; \sigma_y\rangle_{a,h}\leq \langle \sigma_x; \sigma_y\rangle_{a,\vec{h}}= \mathbb{P}^a_{\vec{h}}(A^0)+D_{ff}+D_{fc}+D_{cf}+D_{cc},
\end{equation}
where for $u,v\in\{f,c\}$, $D_{u,v}:=\mathbb{P}^a_{\vec{h}}(A_x^u\cap A_y^v)-\mathbb{P}^a_{\vec{h}}(A_x^u)\mathbb{P}^a_{\vec{h}}(A_y^v)$.
\end{lemma}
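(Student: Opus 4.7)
The plan is to handle the two assertions---the inequality and the identity---in sequence; both come essentially for free from tools already on the table.

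For the inequality $\langle\sigma_x;\sigma_y\rangle_{a,h}\leq\langle\sigma_x;\sigma_y\rangle_{a,\vec{h}}$, I would invoke the GHS inequality \cite{GHS70} directly. The field $\vec{h}$ is obtained from the constant field $h$ by setting the field to zero in $B(x,1)\cup B(y,1)$, i.e.\ by decreasing the external field at a set of sites. The Lebowitz form of GHS asserts that in a nonnegatively-fielded ferromagnetic Ising system the truncated two-point function is nonnegative and monotone non-increasing in each coordinate of the field; decreasing the field at any subset of sites can therefore only (weakly) increase $\langle\sigma_x;\sigma_y\rangle$.

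For the identity, my plan is first to apply Lemma~\ref{lemES} under $P^a_{\vec{h}}$. The proof of Lemma~\ref{lemES} uses only the Edwards--Sokal coupling, which goes through unchanged for any nonnegative, site-dependent field---the only modification is that the external-edge parameter at vertex $z$ becomes $1-e^{-2\vec{h}_z a^{15/8}}$, still in $[0,1]$---so
\[
\langle\sigma_x;\sigma_y\rangle_{a,\vec{h}}=\mathbb{P}^a_{\vec{h}}(x\longleftrightarrow y)-\mathbb{P}^a_{\vec{h}}(x\longleftrightarrow g)\,\mathbb{P}^a_{\vec{h}}(y\longleftrightarrow g).
\]
I would then partition $\{x\longleftrightarrow y\}=A^0\sqcup(A^1_x\cap A^1_y)$: either the common cluster of $x$ and $y$ avoids $g$, giving $A^0$, or it contains $g$, in which case every vertex in that cluster---in particular both $x$ and $y$---is joined to $g$, giving $A^1_x\cap A^1_y$. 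Also $\{x\longleftrightarrow g\}=A^1_x$ and $\{y\longleftrightarrow g\}=A^1_y$ by definition.

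The final step is pure bookkeeping: using the disjoint decomposition $A^1_z=A^c_z\sqcup A^f_z$ for $z\in\{x,y\}$, expand
\[
\mathbb{P}^a_{\vec{h}}(A^1_x\cap A^1_y)-\mathbb{P}^a_{\vec{h}}(A^1_x)\mathbb{P}^a_{\vec{h}}(A^1_y)=\sum_{u,v\in\{f,c\}}\bigl[\mathbb{P}^a_{\vec{h}}(A^u_x\cap A^v_y)-\mathbb{P}^a_{\vec{h}}(A^u_x)\mathbb{P}^a_{\vec{h}}(A^v_y)\bigr]=D_{ff}+D_{fc}+D_{cf}+D_{cc},
\]
which combined with the $\mathbb{P}^a_{\vec{h}}(A^0)$ term gives the claimed identity. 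There is no real obstacle here: the lemma is a clean bookkeeping identity packaging the decomposition that will be used to estimate each summand separately later. The only point requiring a sentence of verification is the applicability of Lemma~\ref{lemES} to the inhomogeneous field $\vec{h}$, which is immediate from locality of the Edwards--Sokal coupling in the external field.
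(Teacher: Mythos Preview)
Your proposal is correct and follows essentially the same approach as the paper: the inequality via GHS, then the Edwards--Sokal identity $\langle\sigma_x;\sigma_y\rangle_{a,\vec{h}}=\mathbb{P}^a_{\vec{h}}(A^0)+\mathbb{P}^a_{\vec{h}}(A^1_x\cap A^1_y)-\mathbb{P}^a_{\vec{h}}(A^1_x)\mathbb{P}^a_{\vec{h}}(A^1_y)$ using the disjoint decomposition $\{x\longleftrightarrow y\}=A^0\sqcup(A^1_x\cap A^1_y)$, followed by expanding via $A^1_z=A^c_z\sqcup A^f_z$. Your added remark that Lemma~\ref{lemES} extends to the site-dependent field $\vec{h}$ is a welcome clarification the paper leaves implicit.
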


Next, we show that each term on the RHS of \eqref{eqcov} decays exponentially with the desired power law factor $a^{1/4}$.
\begin{proposition}\label{propcov}
For any $h>0$, there exist $\epsilon_0=\epsilon_0(h)\in (0,1]$ and $N_1=N_1(h)\in[2,\infty)$ such that for all $a\leq \epsilon_0$, and $x,y\in a\mathbb{Z}^2$ with $|x-y|>3N_1$,
\[\mathbb{P}^a_{\vec{h}}(A^0)\leq C_2a^{1/4}e^{-C_3(h)|x-y|}, D_{uv}\leq C_2a^{1/4}e^{-C_3(h)|x-y|} \text{ for any }u,v \in\{f,c\},\]
where $C_2\in(0,\infty)$ is universal, and $C_3(h)\in (0,\infty)$ only depends on $h$.
\end{proposition}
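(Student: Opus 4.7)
The plan is to bound each of the five non-negative terms on the right-hand side of \eqref{eqcov} separately by $C(h)\,a^{1/4}\,e^{-c(h)|x-y|}$. In each case the factor $a^{1/4}$ will come from the pair of one-arm events $E_1^x:=\{x\leftrightarrow\partial B(x,1)\}$ and $E_1^y:=\{y\leftrightarrow\partial B(y,1)\}$; since $\vec h$ vanishes on $B(x,1)\cup B(y,1)$ these unit balls are critical, and Lemma~\ref{lem1arm} bounds each one-arm by $C_1a^{1/8}$ uniformly in the inherited boundary condition. The exponential factor will come from Proposition~\ref{propFK}, applied via either the annulus-crossing event $F$ (for the ``far'' cases) or the $g$-circuit event $G$ (for the ``close'' case). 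The FK domain Markov property will be used throughout to reduce full-plane FK probabilities to FK probabilities on the relevant annulus, where Proposition~\ref{propFK} gives a bound uniform in the boundary condition.

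\textbf{The terms $\mathbb{P}^a_{\vec h}(A^0)$, $D_{ff}$, $D_{fc}$, $D_{cf}$.} For $A^0$, since the cluster of $x$ contains $y$ but avoids $g$, it crosses the annulus $A(x;1,|x-y|-1)$ without any $g$-connection inside, so $A^0\subseteq E_1^x\cap E_1^y\cap F$. Conditioning on the configuration outside $B(x,1)\cup B(y,1)$ makes the two one-arm events conditionally independent with each probability $\le C_1a^{1/8}$, while $F$ is measurable with respect to this exterior and bounded by $e^{-C_1(h)(|x-y|-1)}$ via DLR and Proposition~\ref{propFK}. For $D_{ff}$ the key observation is $A^f_z\subseteq E_1^z\cap F^z$ on $A(z;1,|x-y|/3)$: if $z\leftrightarrow g$ but no close $g$-neighbor of $z$'s cluster lies in $B(z,|x-y|/3)$, then the cluster of $z$ restricted to that ball has no open $g$-edge yet must exit the ball, producing an $F$-type crossing. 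Thus $A^f_x\cap A^f_y$ is contained in four events on disjoint regions, and the same conditioning gives $\mathbb{P}(A^f_x\cap A^f_y)\le Ca^{1/4}e^{-c(h)|x-y|/3}$; combined with $\mathbb{P}(A^f_z)\le Ca^{1/8}e^{-c(h)|x-y|/3}$ this dominates $|D_{ff}|$. The mixed terms $D_{fc},D_{cf}$ follow the same pattern using a single $F$-event on one side.

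\textbf{The term $D_{cc}$ and main obstacle.} This is the delicate case: neither $A^c_x$ nor $A^c_y$ forces a long crossing, so no single event supplies exponential decay and the cancellation inside the covariance must do the work. The plan is to invoke the $g$-circuit event $G$ of Proposition~\ref{propFK} on an annulus $A(x;|x-y|/3,2|x-y|/3)$ that separates $U_x:=B(x,|x-y|/3)$ from $U_y:=B(y,|x-y|/3)$: with probability at least $1-e^{-c(h)|x-y|/3}$ an outermost $g$-connected circuit $\gamma$ around $U_x$ exists, and the FK domain Markov property makes $A^c_x$ conditionally independent of $A^c_y$ given $\gamma$, with $q(\gamma):=\mathbb{P}(A^c_x\mid\gamma)\le C_1a^{1/8}$ uniformly. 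Splitting
\begin{equation*}
D_{cc}=\bigl[\mathbb{P}(A^c_x\cap A^c_y\cap G)-\mathbb{P}(A^c_x\cap G)\mathbb{P}(A^c_y)\bigr]+\bigl[\mathbb{P}(A^c_x\cap A^c_y\cap G^c)-\mathbb{P}(A^c_x\cap G^c)\mathbb{P}(A^c_y)\bigr],
\end{equation*}
the $G^c$-bracket is controlled via $A^c_z\subseteq E_1^z$ by conditioning on the exterior of $B(x,1)\cup B(y,1)$ and using Lemma~\ref{lem1arm} and Proposition~\ref{propFK}, yielding $Ca^{1/4}\mathbb{P}(G^c)\le Ca^{1/4}e^{-c(h)|x-y|/3}$. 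The $G$-bracket equals $E\bigl[q(\gamma)\mathbf 1_G(\mathbf 1_{A^c_y}-\mathbb{P}(A^c_y))\bigr]$, which naively is only $\le Ca^{1/4}$ since both factors have magnitude of order $a^{1/8}$. Extracting the additional $e^{-c(h)|x-y|/3}$ from this residual covariance is where I expect the main difficulty to lie; my plan is to apply Proposition~\ref{propFK} a second time to a disjoint sub-annulus around $y$, producing a symmetric $g$-circuit $\gamma'$ around $U_y$, so that on $G\cap G'$ both conditional probabilities factor through $(\gamma,\gamma')$ and the residual covariance is supported on the exponentially rare event $(G\cap G')^c$. The careful bookkeeping to ensure that this two-sided decoupling genuinely preserves the $a^{1/4}$ factor while gaining $e^{-c(h)|x-y|/3}$---rather than mixing the two orders in a suboptimal way---is the technically hardest step of the entire proposition.
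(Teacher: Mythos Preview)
Your treatment of $\mathbb{P}^a_{\vec h}(A^0)$, $D_{ff}$, $D_{fc}$, $D_{cf}$ matches the paper's argument essentially verbatim.

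For $D_{cc}$, however, your two-circuit decoupling plan has a structural gap, not merely a bookkeeping issue. Suppose you place disjoint annuli around $x$ and $y$ (say of inner radius $|x-y|/3$ and outer radius $|x-y|/2$) and condition on outermost $g$-circuits $\gamma,\gamma'$ in each. After handling the $(G\cap G')^c$ corrections---which, as you say, are $O(a^{1/4}e^{-c(h)|x-y|})$---what remains is
\[
D_{cc}=\operatorname{Cov}\bigl(\mathbf 1_{G}\,q(\gamma),\;\mathbf 1_{G'}\,q'(\gamma')\bigr)+O\bigl(a^{1/4}e^{-c(h)|x-y|}\bigr).
\]
But $\mathbf 1_G\,q(\gamma)$ is measurable with respect to the FK configuration in the annulus around $x$, and $\mathbf 1_{G'}\,q'(\gamma')$ with respect to the annulus around $y$. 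These regions are adjacent, and the FK covariance between observables on adjacent regions has no reason to be $o(a^{1/4})$ in $|x-y|$. Iterating with further $g$-circuits does not help: each step leaves a residual covariance between annulus observables that are no farther apart than before. Your circuits decouple the \emph{interiors}, but the randomness you need to kill lives in the \emph{annuli themselves}.

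The paper sidesteps this via monotonicity rather than decoupling. It writes
\[
D_{cc}=\mathbb{P}^a_{\vec h}(A^c_y)\bigl[\mathbb{P}^a_{\vec h}(A^c_x\mid A^c_y)-\mathbb{P}^a_{\vec h}(A^c_x)\bigr],
\]
collects one $a^{1/8}$ from the first factor, and bounds the bracket by the wired-minus-free difference $\mathbb{P}^a_{2/3,w,\vec h}(A^c_x)-\mathbb{P}^a_{2/3,f,\vec h}(A^c_x)$ on $B(x,2|x-y|/3)$. The crux is a separate stochastic-domination lemma: conditioning the \emph{free} measure on the $g$-circuit event $G$ stochastically dominates the \emph{wired} measure on the interior of the circuit (intuitively, the $g$-circuit acts as a wired-to-ghost boundary sitting \emph{inside} the actual wired boundary). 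This yields $\mathbb{P}^a_{2/3,w,\vec h}(A^c_x)\le\mathbb{P}^a_{2/3,f,\vec h}(A^c_x\mid G)$, whence the bracket is at most $\mathbb{P}^a_{2/3,f,\vec h}(G^{\mathrm{comp}})\,\mathbb{P}^a_{2/3,w,\vec h}(A^c_x)\le C_1 a^{1/8}e^{-C_1(h)|x-y|}$. The domination lemma is proved by checking that the Radon--Nikodym derivative of $\mathbb{P}^a_{2/3,f,\vec h}(\,\cdot\mid A_{\mathbf C})$ with respect to $\mathbb{P}^a_{D,w,\vec h}$ (where $D$ is the interior of the deterministic circuit $\mathbf C$) is an increasing function---a short but not entirely obvious computation using that opening an interior edge can only enlarge the set of exterior configurations compatible with $\mathbf C$ being the outermost $g$-circuit.
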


\begin{proof}
The proofs for $\mathbb{P}^a_{\vec{h}}(A^0)$, $D_{ff}$, $D_{fc}$ and $D_{cf}$ are similar to each other. The proof for $D_{cc}$ is harder. Let $\epsilon_0=\epsilon_0(h)$ and $N_1=N_1(h)$ be the same as in Proposition \ref{propFK}.

\emph{(1) $\mathbb{P}^a_{\vec{h}}(A^0)$.}
In order for $A^0$ to occur there must be one arm events in both $B(x,1)$ and $B(y,1)$, and in the complement of $B(x,1)\cup B(y,1)$ there must be a (long) open path from $\partial_{ex} B(x,1)$ to $\partial_{ex} B(y,1)$ with the open cluster (within that complement) of the path not connected to the ghost. We will use Lemma \ref{lem1arm} twice to get $(C_1a^{1/8})^2$ and Proposition \ref{propFK} twice to get the exponential factor. More precisely, define $A^{0,z}$ and $\tilde{A}^{0,z}$ for $z=x$ or $y$ as $A^{0,z}:=\{z\longleftrightarrow\partial_{in}B(z,1)\}$ and $\tilde{A}^{0,z}$ be the event that there is an open path from $\partial_{ex}B(z,1)$ to $\partial_{in}B(z,|x-y|/2)$ with the open cluster of that path in $B(z,|x-y|/2)\setminus B(z,1)$ not connected to $g$. Then
\[A^0\subseteq A^{0,x}\cap \tilde{A}^{0,x}\cap\tilde{A}^{0,y}\cap A^{0,y}\]
and by taking the worst case boundary condition and using translation invariance, we have by using Lemma \ref{lem1arm} and Proposition \ref{propFK} (twice each):
\begin{eqnarray*}
\mathbb{P}^a_{\vec{h}}(A^0)&\leq& \mathbb{P}^a_{\vec{h}}(A^{0,x}\cap \tilde{A}^{0,x}\cap\tilde{A}^{0,y}\cap A^{0,y})\\
&\leq& [\mathbb{P}^a_{\Lambda_1,w,h=0}(0\longleftrightarrow \partial_{in}\Lambda_1^a)]^2\cdot[\sup_{\rho}\mathbb{P}^a_{A(1,|x-y|/2),\rho, h}(F(a,|x-y|/2))]^2\\
&\leq&(C_1 a^{1/8})^2(e^{-C_1(h)|x-y|/2})^2\\
&= & C_2 a^{1/4} e^{-C_3(h)|x-y|}
\end{eqnarray*}
with $C_2=C_1^2$ and $C_3(h)=C_1(h)$.

\emph{(2) $D_{ff}$.}
This proof is close to that for part \emph{(1)} because
\[A_z^f\subseteq \bar{A}_z^f:=\{z\longleftrightarrow \partial_{in} B(z,1)\}\cap \bar{\bar{A}}_z^f,\]
where $\bar{\bar{A}}^f_z$ denote the event that there exists a (long) open path connecting $\partial_{ex}B(z,1)$ to $\partial_{in}B(z,|x-y|/3)$ within the annulus $\overline{Ann}(z):=B(z,|x-y|/3)\setminus B(z,1)$ with the open cluster of that path (within that annulus) not connected to the ghost. This leads to
\[\mathbb{P}^a_{\vec{h}}(\bar{A}_z^f)\leq C_1 a^{1/8}e^{-C_1(h)|x-y|/3}.\]
More generally, by considering the worst boundary condition twice in the sense of
\[\theta_z:=\sup_{\rho}\mathbb{P}^a_{\overline{Ann}(z),\rho,\vec{h}}(\bar{\bar{A}}_z^f),\]
where the sup is over all (FK) boundary conditions on both parts of the boundary of $\overline{Ann}(z)$, and doing that both for $z=x$ and $z=y$, one gets the last inequality in
\[D_{ff}=\mathbb{P}^a_{\vec{h}}(A_x^f\cap A_y^f)-\mathbb{P}^a_{\vec{h}}(A_x^f)\mathbb{P}^a_{\vec{h}}(A_y^f)\leq\mathbb{P}^a_{\vec{h}}(A_x^f\cap A_y^f)\leq (C_1 a^{1/8}e^{-C_1(h)|x-y|/3})^2.\]

\emph{(3) $D_{fc}$ and $D_{cf}$.}
Clearly, $D_{fc}=D_{cf}$, so we only need to prove decay for $D_{fc}$.
Note that \[D_{fc}=\mathbb{P}^a_{\vec{h}}(A_x^f\cap A_y^c)-\mathbb{P}^a_{\vec{h}}(A_x^f)\mathbb{P}^a_{\vec{h}}(A_y^c)\leq\mathbb{P}^a_{\vec{h}}(A_x^f\cap A_y^c).\]
$A_x^f$ is treated as in the proof of part \emph{(2)} but $A_y^c$ is handled by noting that $A_y^c\subseteq\{y\longleftrightarrow \partial_{in}B(y,1)\}$. This leads to
\begin{align*}
D_{fc}&\leq \mathbb{P}^a_{B(x,1),w,h=0}(x\longleftrightarrow \partial_{in}B(x,1))\cdot\theta_x\cdot\mathbb{P}^a_{B(y,1),w,h=0}(y\longleftrightarrow \partial_{in}B(y,1))\\
&\leq C_1^2 a^{1/4} e^{-C_1(h)|x-y|/3}.
\end{align*}

\emph{(4) $D_{cc}$.} We have that
\[D_{cc}=\mathbb{P}^a_{\vec{h}}(A^c_x\cap A^c_y)-\mathbb{P}^a_{\vec{h}}(A^c_x)\mathbb{P}^a_{\vec{h}}(A^c_y)=\mathbb{P}^a_{\vec{h}}(A^c_y)[\mathbb{P}^a_{\vec{h}}(A_x^c|A^c_y)-\mathbb{P}^a_{\vec{h}}(A^c_x)].\]
Now by Lemma \ref{lem1arm},
\[\mathbb{P}^a_{\vec{h}}(A^c_y)\leq \mathbb{P}^a_{B(y,1),w,h=0}(y\longleftrightarrow \partial_{in}B(y,1))\leq C_1a^{1/8}.\]
We consider the worst case boundary condition on $\partial_{ex}B(x,2|x-y|/3)$ to get
\[\mathbb{P}^a_{\vec{h}}(A_x^c|A^c_y)-\mathbb{P}^a_{\vec{h}}(A^c_x)\leq \mathbb{P}^a_{2/3,w,\vec{h}}(A_x^c)-\mathbb{P}^a_{2/3,f,\vec{h}}(A_x^c),\]
where $\mathbb{P}^a_{2/3,w,\vec{h}}$ and $\mathbb{P}^a_{2/3,f,\vec{h}}$ refer to wired and free boundary conditions on $B(x,2|x-y|/3)$. As in Proposition \ref{propFK}, let $G=G(a,|x-y|/3)$ denote the event that there is a circuit of vertices surrounding $B(x,|x-y|/3)$ in the annulus $Ann(1/3,2/3):=B(x,2|x-y|/3)\setminus B(x,|x-y|/3)$ with each vertex in the circuit connected to $g$ within the annulus. Then
\begin{align*}
&\mathbb{P}^a_{2/3,w,\vec{h}}(A_x^c)-\mathbb{P}^a_{2/3,f,\vec{h}}(A_x^c)=\mathbb{P}^a_{2/3,w,\vec{h}}(A_x^c)-[\mathbb{P}^a_{2/3,f,\vec{h}}(G)\mathbb{P}^a_{2/3,f,\vec{h}}(A_x^c|G)\\
&~~~~+\mathbb{P}^a_{2/3,f,\vec{h}}(G^{comp})\mathbb{P}^a_{2/3,f,\vec{h}}(A_x^c|G^{comp})]\\
&\leq\mathbb{P}^a_{2/3,f,\vec{h}}(G)[\mathbb{P}^a_{2/3,w,\vec{h}}(A_x^c)-\mathbb{P}^a_{2/3,f,\vec{h}}(A_x^c|G)]+\mathbb{P}^a_{2/3,f,\vec{h}}(G^{comp})\mathbb{P}^a_{2/3,w,\vec{h}}(A_x^c).
\end{align*}
$\mathbb{P}^a_{2/3,f,\vec{h}}(A_x^c|G)$ corresponds roughly to a wired boundary condition on some random circuit which is \emph{inside} the wired boundary condition of $\mathbb{P}^a_{2/3,w,\vec{h}}$. Since $A_x^c$ is an increasing event, one expects that
\[\mathbb{P}^a_{2/3,w,\vec{h}}(A_x^c)-\mathbb{P}^a_{2/3,f,\vec{h}}(A_x^c|G)\leq 0\]
by some stochastic domination argument. Indeed, this inequality is proved in the next lemma.
Then, by Proposition \ref{propFK},
\begin{align*}
\mathbb{P}^a_{2/3,w,\vec{h}}(A_x^c)-\mathbb{P}^a_{2/3,f,\vec{h}}(A_x^c)&\leq\mathbb{P}^a_{2/3,f,\vec{h}}(G^{comp})\mathbb{P}^a_{2/3,w,\vec{h}}(A_x^c)\\
&\leq \mathbb{P}^a_{2/3,f,\vec{h}}(G^{comp})\mathbb{P}^a_{B(x,1),w,h=0}(x\longleftrightarrow \partial_{in}B(x,1))\\
&\leq C_1a^{1/8}e^{-C_1(h)|x-y|/3}.
\end{align*}
This concludes the proof.
\end{proof}

\begin{lemma}\label{lemstochdom}
Let $\mathbf{C}$ be any deterministic circuit of vertices in the annulus $Ann(1/3,2/3)$. Let $\tilde{A}_{\mathbf{C}}$ denote the event that each $x\in\mathbf{C}$ is connected to $g$ within the annulus and let $A_{\mathbf{C}}$ denote the event that $\mathbf{C}$ is the \emph{outermost} such circuit. Then for any increasing event $A$ in the interior of $\mathbf{C}$ (including edges to $g$),
\begin{equation}\label{eqsd}
\mathbb{P}^a_{2/3,f,\vec{h}}(A|A_{\mathbf{C}})\geq \mathbb{P}^a_{2/3,w,\vec{h}}(A).
\end{equation}
With $G=\cup_{\mathbf{C}}\tilde{A}_{\mathbf{C}}=\cup_{\mathbf{C}}A_{\mathbf{C}}$, it follows that for any increasing event $E$ in $B(x,|x-y|/3)$,
\begin{equation*}
\mathbb{P}^a_{2/3,f,\vec{h}}(E|G)\geq \mathbb{P}^a_{2/3,w,\vec{h}}(E).
\end{equation*}
\end{lemma}
\begin{remark}
 We note that the above lemma is not trivial because $G$ is a random, not a deterministic, set. We also point out
that the proof below shows that this lemma applies to quite general annuli, boundary conditions, and magnetic field profiles $h(x)\geq 0$ (as opposed to only $Ann(1/3,2/3)$, $f$ and $w$, and $\vec{h}$).
\end{remark}
\begin{proof}
For simplicity, we let $B$ denote the $a$-approximation of $B(0,2|x-y|/3)$ in this proof. Let $D$ be the interior of $\mathbf{C}$. The stochastic domination \eqref{eqsd} will follow from the stronger stochastic domination  that
\begin{equation}\label{eqsd1}
\mathbb{P}_{2/3,f,\vec{h}}^a(A|A_{\mathbf{C}})\geq \mathbb{P}^a_{D,w,\vec{h}}(A) \text{ for any increasing event }A \text{ in } D,
\end{equation}
since $\mathbb{P}^a_{D,w,\vec{h}}$ stochastically dominates $\mathbb{P}_{2/3,w,\vec{h}}^a$ on $D$. To prove \eqref{eqsd1}, it is sufficient to prove that the Radon-Nikodym derivative $d\mathbb{P}^a_{2/3,f,\vec{h}}(\cdot|A_{\mathbf{C}})/d\mathbb{P}^a_{D,w,\vec{h}}(\cdot)$ is an increasing function (in the FKG sense). In the following proof, $\omega_{out}$ is always in $\{0,1\}^{(\mathscr{B}(B)\setminus\mathscr{B}(D))\cup (\mathscr{E}(B)\setminus\mathscr{E}(D))}$. By the $\vec{h}$ replacing a constant $h$ version of \eqref{eqFKdef}, for any $\omega_{in}\in\{0,1\}^{\mathscr{B}(D)\cup\mathscr{E}(D)}$,
\begin{align}
\mathbb{P}^a_{2/3,f,\vec{h}}(\omega_{in}|A_{\mathbf{C}})\propto \sum_{\omega:=\omega_{in}\oplus \omega_{out}\in A_{\mathbf{C}}}2^{\mathcal{K}\left(B, (\omega\rho^0)_{B}\right)}\prod_{e\in\mathscr{B}(B)}(1-e^{-2\beta_c})^{\omega(e)}(e^{-2\beta_c})^{1-\omega(e)}\nonumber\\
\times\prod_{e\in\mathscr{E}(B)}(1-e^{-2a^{15/8}\vec{h}_e})^{\omega(e)}(e^{-2a^{15/8}\vec{h}_e})^{1-\omega(e)},\label{eqsd2}
\end{align}
where $\rho^0$ is the configuration with every edge closed and $\omega_{in}\oplus \omega_{out}$ denotes the configuration in $\{0,1\}^{\mathscr{B}(B)\cup\mathscr{E}(B)}$ whose open edges are all those from $\omega_{in}$ or (disjointly) from $\omega_{out}$. Also,
\begin{align}
\mathbb{P}^a_{D,w,\vec{h}}(\omega_{in}|A_{\mathbf{C}})\propto 2^{\mathcal{K}\left(D, (\omega_{in}\rho^1)_{D}\right)}\prod_{e\in\mathscr{B}(B)}(1-e^{-2\beta_c})^{\omega_{in}(e)}(e^{-2\beta_c})^{1-\omega_{in}(e)}\nonumber\\
\times\prod_{e\in\mathscr{E}(B)}(1-e^{-2a^{15/8}\vec{h}_e})^{\omega_{in}(e)}(e^{-2a^{15/8}\vec{h}_e})^{1-\omega_{in}(e)},\label{eqsd3}
\end{align}
where $\rho^1$ denotes the configuration with every edge open. Suppose $\tilde{\omega}_{in}(e)=\omega_{in}(e)$  for each $e\in \mathscr{B}(D)\cup\mathscr{E}(D)$ except for one edge $e_0$ where $\tilde{\omega}_{in}(e_0)=1$ while $\omega_{in}(e_0)=0$. For any fixed $\omega_{out}$, let $\omega=\omega_{in}\oplus \omega_{out}$ and $\tilde{\omega}=\tilde{\omega}_{in}\oplus\omega_{out}$. If $\omega\in A_{\mathbf{C}}$, then it is not hard to see that
\begin{equation}\label{eqsdweight}
\mathcal{K}\left(B, (\tilde{\omega}\rho^0)_{B}\right)-\mathcal{K}\left(B, (\omega\rho^0)_{B}\right)=\mathcal{K}\left(D, (\tilde{\omega}_{in}\rho^1)_{D}\right)-\mathcal{K}\left(D, (\omega_{in}\rho^1)_{D}\right).
\end{equation}

A key observation is
\begin{equation}\label{eqsdinc}
\{\omega_{out}:\omega_{in}\oplus\omega_{out}\in A_{\mathbf{C}}\}\subseteq \{\omega_{out}:\tilde{\omega}_{in}\oplus\omega_{out}\in A_{\mathbf{C}}\}.
\end{equation}
Combining \eqref{eqsdweight} and \eqref{eqsdinc} with \eqref{eqsd2} and \eqref{eqsd3}, we have that
\[\frac{\mathbb{P}^a_{2/3,f,\vec{h}}(\omega_{in}|A_{\mathbf{C}})}{\mathbb{P}^a_{D,w,\vec{h}}(\omega_{in})}\leq \frac{\mathbb{P}^a_{2/3,f,\vec{h}}(\tilde{\omega}_{in}|A_{\mathbf{C}})}{\mathbb{P}^a_{D,w,\vec{h}}(\tilde{\omega}_{in})},\]
which completes the proof of \eqref{eqsd1} and thus \eqref{eqsd}.
\end{proof}

We are ready to prove Proposition \ref{propdisexp}
\begin{proof}[Proof of Proposition \ref{propdisexp}]
Proposition \ref{propdisexp} follows from Lemma \ref{lemGHS} and Proposition \ref{propcov}.
\end{proof}

\subsection{Proof of Theorem \ref{thm1}}

Proposition \ref{propdisexp} implies: for any $h>0$ and $a\in(0,1]$ we have
\begin{eqnarray}\label{eqlargedis}
\langle \sigma_x;\sigma_y\rangle_{a,h}\leq C_4 a^{1/4} e^{-m_2(h)|x-y|} \text{ whenever }|x-y|>K_2(h) \text{ and }x,y\in a\mathbb{Z}^2
\end{eqnarray}
where $C_4\in(0,\infty)$ is universal, and $m_2(h), K_2(h)\in (0,\infty)$ only depend on h.

For any $x,y\in a\mathbb{Z}^2$ with $|x-y|\leq K_2(h)$, by the GHS inequality \cite{GHS70} and Proposition 5.5 of \cite{DCHN11},
\begin{equation}\label{eqGHS}
\langle \sigma_x;\sigma_y\rangle_{a,h}\leq \langle \sigma_x;\sigma_y\rangle_{a,h=0}\leq \tilde{C}_2a^{1/4}|x-y|^{-1/4},
\end{equation}
where $\tilde{C}_2\in (0,\infty)$. Now, \eqref{eqlargedis} and \eqref{eqGHS} imply
\begin{proposition}\label{propdisexp2}
For any $h>0$ and $a\in(0,1]$ we have
\begin{eqnarray*}
\langle \sigma_x;\sigma_y\rangle_{a,h}\leq C_5(h) a^{1/4}|x-y|^{-1/4} e^{-m_3(h)|x-y|} \text{ for any }x,y\in a\mathbb{Z}^2,
\end{eqnarray*}
where $C_5(h),m_3(h)\in (0,\infty)$ only depend on h.
\end{proposition}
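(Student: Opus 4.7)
The plan is to paste together the two bounds already established in the excerpt, namely the exponential bound \eqref{eqlargedis} valid for $|x-y|>K_2(h)$ and the critical power-law bound \eqref{eqGHS} valid for $|x-y|\leq K_2(h)$. The upshot is that \eqref{eqlargedis} already has a better-than-needed decay (pure exponential, with no power-law prefactor), so we can afford to extract an extra factor of $|x-y|^{-1/4}$ by giving up a small amount of the exponential rate, while \eqref{eqGHS} has the right power-law prefactor and so we only need to absorb a bounded exponential correction.

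First I would fix $m_4(h)\in(0,m_3(h))$, for instance $m_4(h):=m_3(h)/2$. Then, for $|x-y|>K_2(h)$, I would rewrite the right-hand side of \eqref{eqlargedis} as
\[
C_5(h)\,a^{1/4}e^{-m_3(h)|x-y|}=C_5(h)\,a^{1/4}|x-y|^{-1/4}\cdot\bigl(|x-y|^{1/4}e^{-(m_3(h)-m_4(h))|x-y|}\bigr)\cdot e^{-m_4(h)|x-y|}.
\]
The middle factor depends only on $|x-y|$ and $h$, and as a function of $r:=|x-y|>0$ the quantity $r^{1/4}e^{-(m_3(h)-m_4(h))r}$ is bounded by some finite constant $M_1(h)$ depending only on $h$. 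This yields the desired form of the bound on the long-range regime with constant $C_5(h)M_1(h)$.

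Next, for $x,y\in a\mathbb{Z}^2$ with $x\ne y$ and $|x-y|\leq K_2(h)$, I would apply \eqref{eqGHS} directly and write
\[
\langle\sigma_x;\sigma_y\rangle_{a,h}\leq C_2\,a^{1/4}|x-y|^{-1/4}=C_2\,a^{1/4}|x-y|^{-1/4}e^{-m_4(h)|x-y|}\cdot e^{m_4(h)|x-y|},
\]
and use $e^{m_4(h)|x-y|}\leq e^{m_4(h)K_2(h)}=:M_2(h)$, which depends only on $h$. Finally, taking
\[
C_6(h):=\max\bigl\{C_5(h)M_1(h),\,C_2\,M_2(h)\bigr\}
\]
gives the claimed bound uniformly in $x\ne y\in a\mathbb{Z}^2$ and $a\in(0,1]$.

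There is no real obstacle here: every ingredient has already been established in the excerpt, and the argument is a standard splice-and-reabsorb maneuver. The only mildly delicate point is the implicit assumption $x\ne y$ (otherwise $|x-y|^{-1/4}$ is undefined on the right-hand side), but this is the standard convention for such power-law-times-exponential bounds and is compatible with how the proposition will be used when integrating against test functions in Theorem \ref{thmmain}.
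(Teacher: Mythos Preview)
Your proposal is correct and follows exactly the approach of the paper, which simply states that \eqref{eqlargedis} and \eqref{eqGHS} together imply the proposition. You have spelled out the straightforward splice-and-reabsorb details that the paper leaves implicit, and your observation about the $x\ne y$ convention is appropriate.
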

Now we are ready to prove Theorem \ref{thm1}.
\begin{proof}[Proof of Theorem \ref{thm1}]
The leftmost inequalities of \eqref{eqdis1} and \eqref{eqdis2} follow from the Griffiths' inequality (see Corollary 1 of \cite{Gri67}).
So to prove Theorem \ref{thm1}, it remains to show that, for any $a\in(0,1]$ and $h\in(0,a^{-15/8}]$,
\begin{equation}\label{eqthm1proof}
\langle \sigma_x;\sigma_y\rangle_{a,h}\leq C_0a^{1/4}|x-y|^{-1/4}e^{-B_0h^{8/15}|x-y|}\text{ for any }x,y\in a\mathbb{Z}^2.
\end{equation}
In Proposition \ref{propdisexp2}, letting $a=H^{8/15}$ where $H\leq 1$ and $h=1$, we get
\[\langle \sigma_x;\sigma_y\rangle_{H^{8/15},1}\leq C_5(1) H^{2/15}|x-y|^{-1/4} e^{-m_3(1)|x-y|} \text{ for any }x,y\in H^{8/15}\mathbb{Z}^2.\]
Rephrasing the last result on the $\mathbb{Z}^2$ lattice, we get (letting $x^{\prime}=xH^{-8/15}$ and $y^{\prime}=yH^{-8/15}$)
\begin{equation}\label{eqIsingZ2}
\langle \sigma_{x^{\prime}};\sigma_{y^{\prime}}\rangle_{1,H}\leq C_5(1) |x^{\prime}-y^{\prime}|^{-1/4} e^{-m_3(1)H^{8/15}|x^{\prime}-y^{\prime}|} \text{ for any }x^{\prime},y^{\prime}\in \mathbb{Z}^2,
\end{equation}
which proves \eqref{eqdis2}. Now \eqref{eqthm1proof} follows by rephrasing \eqref{eqIsingZ2} on the $a\mathbb{Z}^2$ lattice with external field $ha^{15/8}$.
\end{proof}

\section{Exponential decay and scaling in the continuum}\label{seccon}
\subsection{Exponential decay}
\begin{proof}[Proof of Theorem \ref{thmmain}]
For any $f,g\in C_0^{\infty}(\mathbb{R}^2)$, Theorem 1.4 of \cite{CGN16} plus an extension of Proposition 3.5 of \cite{CGN15} imply
\begin{align}\label{eqmain1}
\lim_{a\downarrow 0}\left[\left\langle\Phi^{a,h}(f)\Phi^{a,h}(g)\right\rangle_{a,h}-\left\langle\Phi^{a,h}(f)\right\rangle_{a,h}\left\langle\Phi^{a,h}(g)\right\rangle_{a,h}\right]
=\text{Cov}\left(\Phi^{h}(f),\Phi^{h}(g)\right).
\end{align}
The extension needed is the replacement in Proposition 3.5 of the magnetization variable $m^a_{\Lambda}=\Phi^{a,h=0}(1_{\Lambda})$ (defined for the measure $\langle \cdot \rangle_{\Lambda}^+$ with plus boundary condition on a square $\Lambda$) by $\Phi^{a,h}(f)$. To verify the extension, choose $\Lambda$ to contain the support of $f$ and note that the GKS inequalities \cite{Gri67a,KS68} imply that
\begin{align*}
\left\langle \exp{(t\Phi^{a,h}(f))} \right\rangle_{a,h}&\leq \left\langle\exp{(\Phi^{a,0}\left((t\|f\|_{\infty}+h)1_{\Lambda}\right))} \right\rangle_{\Lambda}^+=\left\langle e^{\tilde{t}m^a_{\Lambda}} \right\rangle_{\Lambda}^+,
\end{align*}
where $\tilde{t}=t\|f\|_{\infty}+h$.

The LHS of \eqref{eqmain1} before the limit is equal to ($E_h^a(\cdot):=\langle\cdot\rangle_{a,h}$)
\begin{align}\label{eqmain2}
&\Big| E_h^a\Big(a^{15/4}\sum_{x,y\in a\mathbb{Z}^2}\sigma_x f(x) \sigma_y g(y)\Big)-E_h^a\Big(a^{15/8}\sum_{x\in a\mathbb{Z}^2}\sigma_x f(x)\Big)E_h^a\Big(a^{15/8}\sum_{y\in a\mathbb{Z}^2}\sigma_y g(y)\Big) \Big|\nonumber\\
&=\Big| a^{15/4}\sum_{x,y\in a\mathbb{Z}^2}\big[E_h^a\left(\sigma_x f\left(x\right) \sigma_y g\left(y\right)\right)-E_h^a\left(\sigma_x f\left(x\right)\right)E_h^a\left(\sigma_y g\left(y\right)\right)\big]\Big|\nonumber\\
&=\Big| a^{15/4}\sum_{x,y\in a\mathbb{Z}^2}\left[f(x)g(y)\left\langle\sigma_x;\sigma_y\right\rangle_{a,h}\right]\Big|\nonumber\\
&\leq a^{15/4} \sum_{x,y\in a\mathbb{Z}^2} |f(x)g(y)|C_0 a^{1/4}|x-y|^{-1/4}e^{-B_0h^{8/15}|x-y|},
\end{align}
where the last inequality follows from Theorem \ref{thm1} when $0<a\leq \min\{1, h^{-8/15}\}$. Letting $a\downarrow 0$ in \eqref{eqmain2}, and using \eqref{eqmain1} completes the proof.
\end{proof}

\subsection{Scaling of the magnetization fields}\label{secscl}
In \cite{CGN15,CGN16}, the critical and near-critical magnetization fields were denoted by $\Phi^{\infty}$ and $\Phi^{\infty,h}$ (where $h$ is the renormalized magnetic field strength). These are generalized random fields on $\mathbb{R}^2$ so for a suitable test function $f$ on $\mathbb{R}^2$ (including $1_{[-L,L]^2}(x)$), one has random variables $\langle \Phi^{\infty},f\rangle$ (or $\int_{\mathbb{R}^2}\Phi^{\infty}(x)f(x)dx$) and similarly for $\Phi^{\infty,h}$. Here we use $\Phi^0$ and $\Phi^h$ in place of $\Phi^{\infty}$ and $\Phi^{\infty,h}$.
\begin{theorem}\label{thmscl1}
For any $\lambda>0$, the field $\Phi^0_{\lambda}(x)=\Phi^0(\lambda x)$ (i.e., $\langle \Phi^0_{\lambda}, f\rangle =\int_{\mathbb{R}^2}\Phi^0(\lambda x)f(x)dx=\int_{\mathbb{R}^2}\Phi^0(y)f(\lambda^{-1}y)\lambda^{-2}dy=\lambda^{-2}\langle\Phi^0, f_{\lambda^{-1}}\rangle$ with $f_{\lambda^{-1}}(x)=f(\lambda^{-1}x)$) is equal in distribution to $\lambda^{-1/8}\Phi^0(x)$.
\end{theorem}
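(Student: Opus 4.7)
The plan is to verify the scaling identity at the lattice level for each $a>0$ and then pass to the continuum limit. Since $\Phi$ is a generalized random field, it suffices to prove that $\langle \Phi_\lambda, f\rangle \stackrel{d}{=} \lambda^{-1/8}\langle \Phi, f\rangle$ for every test function $f\in C_0^\infty(\mathbb{R}^2)$, and more generally that all finite joint distributions $(\langle \Phi_\lambda, f_1\rangle,\ldots,\langle \Phi_\lambda, f_k\rangle)$ agree with those of $(\lambda^{-1/8}\langle \Phi, f_1\rangle,\ldots,\lambda^{-1/8}\langle \Phi,f_k\rangle)$. I will write out the one-test-function case; the multi-test-function version is identical.

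First I would rewrite the pairing using the change of variables $y=\lambda x$, obtaining the elementary identity $\langle \Phi_\lambda, f\rangle = \lambda^{-2}\langle \Phi, f_{\lambda^{-1}}\rangle$ displayed in the statement, and similarly at the lattice level
\begin{equation*}
\langle \Phi^{a,0}_\lambda, f\rangle \;=\; \lambda^{-2}\,\langle \Phi^{a,0}, f_{\lambda^{-1}}\rangle \;=\; \lambda^{-2}\, a^{15/8}\sum_{y\in a\mathbb{Z}^2}\sigma_y\, f(y/\lambda).
\end{equation*}
Setting $b=a/\lambda$ and substituting $y=\lambda x$ with $x\in b\mathbb{Z}^2$ turns this sum into
\begin{equation*}
\lambda^{-2}(\lambda b)^{15/8}\sum_{x\in b\mathbb{Z}^2}\sigma_{\lambda x}\,f(x) \;=\; \lambda^{-1/8}\, b^{15/8}\sum_{x\in b\mathbb{Z}^2}\sigma_{\lambda x}\,f(x).
\end{equation*}

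The central observation is that the critical Ising measure on $a\mathbb{Z}^2$ is scale-free in the following trivial combinatorial sense: the joint law of $\{\sigma_{\lambda x}\}_{x\in b\mathbb{Z}^2}$ under $P^a_{h=0}$ coincides with the joint law of $\{\tilde\sigma_x\}_{x\in b\mathbb{Z}^2}$ under $P^b_{h=0}$, because both lattices are related by the Euclidean similarity $x\mapsto\lambda x$ and the Hamiltonian in \eqref{eqIsingdef} depends only on nearest-neighbour graph structure, not on the Euclidean side length. (Uniqueness of the infinite-volume critical measure, already recalled after \eqref{eqIsingdef}, shows this passes to the infinite-volume limit.) Consequently
\begin{equation*}
\langle \Phi^{a,0}_\lambda, f\rangle \;\stackrel{d}{=}\; \lambda^{-1/8}\,\langle \Phi^{b,0}, f\rangle.
\end{equation*}

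Finally I would send $a\downarrow 0$, noting that $b=a/\lambda\downarrow 0$ simultaneously. Theorem 1.4 of \cite{CGN15} gives convergence in distribution $\Phi^{a,0}\Rightarrow \Phi$ and $\Phi^{b,0}\Rightarrow \Phi$ on the left and right sides respectively, tested against $f_{\lambda^{-1}}$ and $f$; this yields the distributional identity $\langle \Phi_\lambda, f\rangle = \lambda^{-1/8}\langle \Phi, f\rangle$. The extension to joint distributions is identical because the lattice identity is an almost sure equality on a single probability space after the relabelling $y=\lambda x$.

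The only real step where some care is required is the last one: I would need convergence in distribution to apply to the linear functional $f_{\lambda^{-1}}$ on the left, so I would check that $f_{\lambda^{-1}}\in C_0^\infty(\mathbb{R}^2)$ (immediate) and that $\Phi^{a,0}\Rightarrow\Phi$ is established jointly in families of test functions, i.e., as a random element of the relevant space of generalized functions; this is given in \cite{CGN15}. Beyond this, the argument is essentially an application of dimensional analysis: the scaling exponent $1/8$ is forced by the factor $a^{15/8}$ and the $\lambda^{-2}$ from the Jacobian, since $-2+15/8=-1/8$. No continuum renormalization or FK machinery is needed here; the proof is purely a bookkeeping exercise using the lattice coupling and the already-established scaling limit theorem.
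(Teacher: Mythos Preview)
Your argument is correct. The paper's own proof is a one-line citation: it invokes the general conformal covariance result, Theorem~1.8 of \cite{CGN15}, and specializes to the dilation $\phi(z)=\lambda z$. You instead give a self-contained lattice argument: relabel $a\mathbb{Z}^2$ as $b\mathbb{Z}^2$ with $b=a/\lambda$, use that the critical Ising Hamiltonian depends only on the nearest-neighbour graph (not on the Euclidean spacing), and then pass to the limit via the convergence $\Phi^{a,0}\Rightarrow\Phi$ from \cite{CGN15}. Your route is more elementary and makes the arithmetic origin of the exponent $-1/8=-2+15/8$ completely explicit, at the cost of reproving (for dilations) what is already packaged in the cited conformal covariance theorem; the paper's route is shorter because that theorem has already done the work for all conformal maps at once. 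One small remark: you should double-check the exact theorem number in \cite{CGN15} for the $h=0$ convergence (the paper here cites Theorem~1.4 of \cite{CGN16} only for the near-critical case), but this is a bibliographic detail, not a gap in the mathematics.
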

\begin{proof}
This is a special case of the conformal invariance result (Theorem 1.8) of \cite{CGN15} with the conformal map $\phi(z)=\lambda z$.
\end{proof}

\begin{theorem}\label{thmscl2}
For any $h>0$ and $h_0>0$, the field $\lambda^{1/8}\Phi^{h_0}(\lambda x)$ is equal in distribution to $\Phi^{\lambda^{15/8}h_0}(x)$.
\end{theorem}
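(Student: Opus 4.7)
The plan is to first prove the identity at the lattice level using the explicit definition $\Phi^{a,h}=a^{15/8}\sum_{x\in a\mathbb{Z}^2}\sigma_x\delta_x$, and then to pass to the scaling limit using Theorem 1.4 of \cite{CGN16}. Fix a test function $f\in C_0^{\infty}(\mathbb{R}^2)$ and, following the convention from Subsection \ref{secoverview}, write $\Phi^{a,h_0}_{\lambda}(f)=\lambda^{-2}\Phi^{a,h_0}(f_{\lambda^{-1}})$ with $f_{\lambda^{-1}}(x)=f(\lambda^{-1}x)$. Substituting the lattice formula and performing the change of variables $y=\lambda^{-1}x$ gives
\[
\lambda^{1/8}\Phi^{a,h_0}_{\lambda}(f)
=\lambda^{-15/8}a^{15/8}\sum_{x\in a\mathbb{Z}^2}\sigma_x f(\lambda^{-1}x)
=(a/\lambda)^{15/8}\sum_{y\in (a/\lambda)\mathbb{Z}^2}\sigma_{\lambda y}\,f(y).
\]

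The key observation is then that the reindexed collection $\{\sigma_{\lambda y}\}_{y\in (a/\lambda)\mathbb{Z}^2}$ has exactly the distribution of the Ising model on $(a/\lambda)\mathbb{Z}^2$ at inverse temperature $\beta_c$ with external field $a^{15/8}h_0=(a/\lambda)^{15/8}(\lambda^{15/8}h_0)$. Indeed, relabelling $x=\lambda y$ preserves the nearest-neighbour graph structure, the coupling constant $\beta_c$ is dimensionless and does not change, and each spin still receives the same per-site magnetic field contribution, which in the new variables reads as $(a/\lambda)^{15/8}(\lambda^{15/8}h_0)$. Therefore, at the lattice level,
\[
\lambda^{1/8}\Phi^{a,h_0}(\lambda\,\cdot)\ \text{equals in distribution}\ \Phi^{a/\lambda,\,\lambda^{15/8}h_0}(\cdot)
\]
as generalized fields paired against $C_0^{\infty}(\mathbb{R}^2)$ test functions.

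To conclude, let $a\downarrow 0$; then $a/\lambda\downarrow 0$ as well. By Theorem 1.4 of \cite{CGN16}, $\Phi^{a,h_0}$ converges in distribution to $\Phi^{h_0}$ and $\Phi^{a/\lambda,\lambda^{15/8}h_0}$ converges in distribution to $\Phi^{\lambda^{15/8}h_0}$; since $\lambda$ is fixed, the deterministic rescaling $\Phi\mapsto \lambda^{1/8}\Phi(\lambda\,\cdot)$ acts continuously on the space of generalized fields in which this convergence takes place (it is defined via $f\mapsto \lambda^{-15/8}f_{\lambda^{-1}}$ on test functions). Hence the LHS converges to $\lambda^{1/8}\Phi^{h_0}(\lambda x)$ and the RHS to $\Phi^{\lambda^{15/8}h_0}(x)$. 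Equality in distribution is preserved under convergence in distribution, and the theorem follows.

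There is no real obstacle here: the argument is essentially a one-line change of variables on the lattice together with invocation of the already-established scaling limit. The only mild subtlety is confirming that the continuous rescaling is compatible with the mode of convergence in \cite{CGN16}, but this is immediate from the fact that the convergence holds jointly for any finite collection of test functions.
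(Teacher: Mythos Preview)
Your proof is correct and takes a genuinely different route from the paper's. The paper argues entirely in the continuum: it uses that the law $P_h$ of $\Phi^h$ is obtained from the law $P$ of the zero-field limit $\Phi^0$ via the Radon--Nikodym factor $(1/Z_L)e^{h\langle \Phi,\,1_{[-L,L]^2}\rangle}$ (letting $L\to\infty$), and then invokes Theorem~\ref{thmscl1}, i.e., the zero-field conformal covariance $\Phi^0(\lambda x)\overset{d}{=}\lambda^{-1/8}\Phi^0(x)$, to transport the scaling through the exponential tilt. Your argument bypasses both of these ingredients: you establish the exact distributional identity $\lambda^{1/8}\Phi^{a,h_0}(\lambda\,\cdot)\overset{d}{=}\Phi^{a/\lambda,\lambda^{15/8}h_0}(\cdot)$ directly on the lattice by relabelling, and then appeal only to Theorem~1.4 of \cite{CGN16} to pass to the limit. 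This is arguably more elementary, since it avoids any reliance on the continuum Radon--Nikodym description or on the conformal invariance of $\Phi^0$. The paper's approach, on the other hand, makes transparent the mechanism behind the scaling (namely that a rescaling of $\Phi^0$ becomes a change of $h$ in the tilt), and it generalizes immediately to the domain/conformal-map setting of Theorem~\ref{thmcon}, which your lattice change of variables would not handle without further work.
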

\begin{proof}
It follows from \cite{CGN15,CGN16} that the distribution $P_h$ of $\Phi^{h}$ is obtained from $P$ of $\Phi$ by multiplying $P$ by the Radon-Nikodym factor $(1/Z_L)e^{h\langle \Phi, I_{[-L,L]^2}\rangle}$ and letting $L\rightarrow \infty$ --- see, in particular, Section 4 of \cite{CGN16}. Then one applies Theorem \ref{thmscl1} to complete the proof.
\end{proof}

The following observation, which expands on the discussion about scaling relations in the introduction, may be useful to interpret Theorem \ref{thmscl2}.
In the zero-field case, $\Phi^{0}(\lambda x)$ is equal in distribution to $\lambda^{-1/8}\Phi^{0}(x)$ in the sense that, with the change of variables $z=\lambda x$,
\[\int \Phi^{0}(z)f(z)dz = \int \lambda^{-1/8} \Phi^{0}(x) f(\lambda x) \lambda^2 dx = \lambda^{15/8} \int \Phi^{0}(x) f(\lambda x) dx\]
for any $f\in C^{\infty}_0(\mathbb{R}^2)$, where the equalities are in distribution.
In the non-zero-field case, provided that $\tilde{h}=\lambda^{-15/8}h$, using Theorem \ref{thmscl2} one obtains an analogous relation as follows:
\begin{eqnarray*}
\int\Phi^{\tilde{h}}(z)f(z)dz&=&\int\Phi^{\lambda^{-15/8}h}(\lambda x)f(\lambda x) \lambda^2 dx\\
&=&\lambda^{15/8}\int\lambda^{1/8}\Phi^{\lambda^{-15/8}h}(\lambda x)f(\lambda x) dx\\
&=&\lambda^{15/8}\int\Phi^{h}(x)f(\lambda x) dx.
\end{eqnarray*}
Note also that $\tilde{h}=\lambda^{-15/8}h$ implies $M(\Phi^{\tilde{h}}) = C \tilde{h}^{15/8} = \lambda^{-1} M(\Phi^{h})$, where $M$ is introduced in Corollary~\ref{cor:mass}.
This is consistent with the interpretation of $M$ as the inverse of the correlation length.

As noted in Subsection \ref{secoverview}, a version $\Phi^h_{\Omega}$, of $\Phi^h$, can be defined in a (simply connected) domain $\Omega$ (with some boundary condition). In that case, one can consider a conformal map $\phi:\Omega\rightarrow\tilde{\Omega}$ (with inverse $\psi=\phi^{-1}:\tilde{\Omega}\rightarrow\Omega$) and give a generalization of Theorem~\ref{thmscl2}, as we do next. The pushforward by $\phi$ of $\Phi^0_{\Omega}$ to a generalized field on $\tilde{\Omega}$ was described explicitly in Theorem 1.8 of \cite{CGN15}. The generalization to $\Phi^h$, implicit in \cite{CGN16}, is stated explicitly in the next theorem, where we now replace a constant magnetic field $h$ or $\tilde{h}$ on $\Omega$ or $\tilde{\Omega}$ by a suitable magnetic field function $h(z)$ or $\tilde{h}(x)$.
\begin{theorem}\label{thmcon}
The field $\Phi^{h}_{\Omega,\psi}(x):=\Phi^{h}_{\Omega}\left(\psi(x)\right)$ on $\tilde{\Omega}$ is equal in distribution to the field $|\psi^{\prime}(x)|^{-1/8}\Phi^{\tilde{h}}_{\tilde{\Omega}}(x)$ on $\tilde{\Omega}$, where $\tilde{h}(x)=|\psi^{'}(x)|^{15/8}h(\psi(x))$.
\end{theorem}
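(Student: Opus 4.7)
The plan is to combine the $h=0$ conformal covariance result (Theorem 1.8 of \cite{CGN15}), which we take as input, with the Radon--Nikodym construction of $\Phi^h$ from $\Phi^0$ already used in the proof of Theorem~\ref{thmscl2}, and to check that the conformal change of variables $z=\psi(x)$ converts the magnetic-field pairing $\langle \Phi^0_{\Omega},h\rangle_{\Omega}$ into $\langle\Phi^0_{\tilde\Omega},\tilde h\rangle_{\tilde\Omega}$, with precisely the factor $|\psi'(x)|^{15/8}$ absorbed into $\tilde h$.

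First, by the argument recalled in the proof of Theorem~\ref{thmscl2} (now applied on a bounded simply connected domain, the analog for $\Omega$ being implicit in \cite{CGN16}), the law $P^h_{\Omega}$ of $\Phi^{h}_{\Omega}$ is obtained from the law $P^0_{\Omega}$ of $\Phi^{0}_{\Omega}$ as a cutoff limit of
\[\frac{dP^h_{\Omega,L}}{dP^0_{\Omega}}(\varphi)=\frac{1}{Z_{\Omega,L}(h)}\exp\bigl(\langle\varphi,h\,1_{\Omega_L}\rangle_{\Omega}\bigr),\]
with an analogous formula defining $P^{\tilde h}_{\tilde\Omega}$ on $\tilde\Omega$. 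Next, for any $g\in C_0^\infty(\tilde\Omega)$, the change of variables $z=\psi(x)$ with Jacobian $|\psi'(x)|^2$, together with $\phi=\psi^{-1}$, gives
\[\langle\Phi^{h}_{\Omega,\psi},g\rangle_{\tilde\Omega}=\int_{\tilde\Omega}\Phi^{h}_{\Omega}(\psi(x))\,g(x)\,dx=\langle\Phi^{h}_{\Omega},(g\circ\phi)\,|\phi'|^2\rangle_{\Omega}.\]

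Now I would invoke Theorem 1.8 of \cite{CGN15} on the $h=0$ probability space as the joint distributional identity $\Phi^{0}_{\Omega}\circ\psi\stackrel{d}{=}|\psi'|^{-1/8}\Phi^{0}_{\tilde\Omega}$. Substituting it both in the test-function pairing above and in the magnetic-field pairing, the same Jacobian computation gives
\[\langle\Phi^{0}_{\Omega},h\rangle_{\Omega}\stackrel{d}{=}\int_{\tilde\Omega}|\psi'(x)|^{-1/8}\Phi^{0}_{\tilde\Omega}(x)\,h(\psi(x))\,|\psi'(x)|^2\,dx=\langle\Phi^{0}_{\tilde\Omega},\tilde h\rangle_{\tilde\Omega},\]
since $|\psi'|^{-1/8}\cdot|\psi'|^2=|\psi'|^{15/8}$, and likewise $\langle\Phi^{0}_{\Omega},(g\circ\phi)\,|\phi'|^2\rangle_{\Omega}\stackrel{d}{=}\langle\Phi^{0}_{\tilde\Omega},|\psi'|^{-1/8}g\rangle_{\tilde\Omega}$. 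Plugging these two identities into the cutoff Radon--Nikodym representation of Step~1, the characteristic functional becomes
\[E\bigl[e^{i\langle\Phi^{h}_{\Omega,\psi},g\rangle_{\tilde\Omega}}\bigr]=\lim_{L\to\infty}\frac{E^0_{\tilde\Omega}\bigl[\exp\bigl(i\langle\Phi,|\psi'|^{-1/8}g\rangle_{\tilde\Omega}\bigr)\exp\bigl(\langle\Phi,\tilde h\,1_{\tilde\Omega_L}\rangle_{\tilde\Omega}\bigr)\bigr]}{E^0_{\tilde\Omega}\bigl[\exp\bigl(\langle\Phi,\tilde h\,1_{\tilde\Omega_L}\rangle_{\tilde\Omega}\bigr)\bigr]}=E\bigl[\exp\bigl(i\langle|\psi'|^{-1/8}\Phi^{\tilde h}_{\tilde\Omega},g\rangle_{\tilde\Omega}\bigr)\bigr],\]
which is the claimed equality in distribution.

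The hard part will be the analytical justification rather than the bookkeeping: because $\Phi^{0}$ is a generalized field, the factor $\exp(\langle\Phi^0,h\rangle)$ only makes sense through cutoffs, so one must verify that (i) the cutoff family $\Omega_L\uparrow\Omega$ and $\tilde\Omega_L=\psi^{-1}(\Omega_L)$ can be taken to correspond under $\psi$, (ii) the limit $L\to\infty$ commutes with the change of variables and the normalization by $Z_{\Omega,L}(h)$, and (iii) the partition functions $Z_{\Omega,L}(h)$ and $Z_{\tilde\Omega,L}(\tilde h)$ agree in the limit as a consequence of the $h=0$ invariance. All three points follow from the moment bounds and uniform-in-$L$ integrability established in \cite{CGN15,CGN16} (the same ingredients that make Theorem~\ref{thmscl2} work in the full-plane setting), combined with the strict positivity of $|\psi'|$ on compact subsets of $\tilde\Omega$ that makes $\tilde h$ as well-behaved a magnetic-field profile as $h$ is.
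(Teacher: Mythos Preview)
Your approach is essentially the same as the paper's: invoke the $h=0$ conformal covariance of \cite{CGN15}, combine it with the Radon--Nikodym description of $\Phi^h$ from $\Phi^0$, and verify by change of variables that the magnetic-field pairing transforms with the factor $|\psi'|^{15/8}$. The one difference is that you introduce cutoffs $\Omega_L\uparrow\Omega$ and spend the final paragraph justifying the limit $L\to\infty$, whereas the paper explicitly observes that, because $\Omega$ is already a bounded domain, no infinite-volume limit is needed here (this is the contrast with Theorem~\ref{thmscl2}); one can work directly with $\exp(\langle\Phi^0_\Omega,h\rangle_\Omega)/Z_\Omega(h)$ and its image under the conformal map. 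So your ``hard part'' is in fact absent in this setting, and the bookkeeping you carried out is the whole proof.
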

\begin{proof}
The proof is similar to that of Theorem \ref{thmscl2}, except that one doesn't need to take an infinite volume limit. It is enough to note that, since the pushforward
$\phi*\Phi^{0}_{\Omega}$ is equal in distribution to $|\psi^{'}(x)|^{15/8}\Phi^{0}_{\tilde\Omega}$ (see Theorem 1.8 of \cite{CGN15}), with the choice
$\tilde{h}(x)=|\psi^{'}(x)|^{15/8}h(\psi(x))$, $\tilde{h}(x) \Phi^{0}_{\tilde\Omega}(x)$ is equal in distribution to the pushforward $\phi*(h\Phi^{0}_{\Omega})$.
\end{proof}

\subsection{Proof of Corollary \ref{cor:mass}}\label{seccor2}
\begin{proof}[Proof of Corollary \ref{cor:mass}]
Theorem \ref{thmmain} and the scaling properties of $\Phi^h$ (Theorem \ref{thmscl2}) imply that $M(\Phi^h)=Ch^{8/15}$ where $C>0$. It remains to show that $C<\infty$, or equivalently to rule out the possibility that $M(\Phi^h)$ is infinity. But by \eqref{equpper3.94} in Appendix \upperRomannumeral{2} and the discussion right after \eqref{equpper3.94}, we see that $M(\Phi^h)=\infty$ would imply that
\begin{equation}\label{eqKxy}
\mathscr{K}^h(0,y-x)=\mathscr{K}^h(x,y):=\text{Cov}(\Phi^h(x),\Phi^h(y))=0, \text{ for }y-x\neq0.
\end{equation}
Now $\mathscr{K}^h\geq 0$ and by the GHS inequality \cite{GHS70} together with the fact that $\mathscr{K}^h(x,y)$ is the limit of $\langle \sigma_{x^{\prime}};\sigma_{y^{\prime}}\rangle_{a,h}$ as $a\rightarrow 0$ (with $x^{\prime}\rightarrow x$, $y^{\prime}\rightarrow y$), we see that $\mathscr{K}^h$ is non-increasing in $h\geq 0$. Since, by \cite{Wu66}, $\mathscr{K}^0(x,y)=C^{\prime}|x-y|^{-1/4}$, we would have that for $h\geq0$,
\begin{equation*}
0\leq\mathscr{K}^h(x,y)\leq G_{\epsilon}(y-x):=\begin{cases}
      C^{\prime}|x-y|^{-1/4}, & |y-x|\leq \epsilon,\\
      0, & |y-x|>\epsilon.
   \end{cases}
\end{equation*}
But for $f$ the indicator function $1_{\square}$, of the unit square, we would then have that
\begin{equation}\label{eqVar}
\text{Var}(\Phi^h(1_{\square}))\leq\int\int_{\square\times\square}G_{\epsilon}(y-x) dxdy,
\end{equation}
where the integral is over the product of two unit squares. Since the RHS of \eqref{eqVar} tends to zero as $\epsilon\downarrow0$, we see that $M(\Phi^h)=\infty$ would imply that $\Phi^h(1_{\square})$ is a constant random variable. But this would contradict Proposition 2.2 of \cite{CGN16}.
\end{proof}

\section*{Appendix \upperRomannumeral{1}: Some key ingredients}
\renewcommand*{\thetheorem}{\Alph{theorem}}
\setcounter{theorem}{0}
\renewcommand*{\thecorollary}{\Alph{corollary}}
\setcounter{corollary}{0}
In this appendix, we give exact statements of some key existing results which are major building blocks for the main results of this paper. These include continuum results from \cite{KS19,CCK17} and lattice results from \cite{CDCH16}; precise definitions may be found in these references.

For any bounded $D\subseteq \mathbb{R}^2$, let $D^a:=a\mathbb{Z}^2\cap D$ be its $a$-approximation. Let $L_1, L_2:[0,1]\rightarrow \bar{D}$, the closure of $D$, be two loops. The distance between $L_1$ and $L_2$ is defined by
\[d_{\text{loop}}(L_1,L_2)=\inf\sup_{t\in[0,1]}|L_1(t)-L_2(t)|\]
where the infimum is over all choices of parametrizations of $L_1,L_2$ from the interval $[0,1]$. The distance between two closed sets of loops, $F_1$ and $F_2$, is defined by the Hausdorff metric as follows:
\[d_{\text{LE}}(F_1,F_2)=\inf\{\epsilon>0: \forall L_1\in F_1,~\exists L_2\in F_2 \text{ s.t. } d_{\text{loop}}(L_1,L_2)\leq\epsilon\text{ and vice versa}\}.\]

The following theorem from \cite{KS19} establishes the convergence of the collection of the boundaries of critical FK clusters on the medial lattice (the critical FK loop ensemble, see Section 1.2.2 of \cite{KS19}) to nested CLE$_{16/3}$.
\begin{theorem}[Theorem 1.1 in \cite{KS19}]\label{thmconvcle}
Consider critical FK percolation in a discrete domain $D^a$ with free boundary condition. The collection of the lattice boundaries of critical FK clusters converges in distribution to nested CLE$_{16/3}$ in $D$ in the topology of convergence defined by $d_{\emph{LE}}$.
\end{theorem}

For any configuration $\omega$ in critical FK percolation on $D^a$ with free boundary condition, let $\mathscr{C}(D^a,f,\omega)$ denote the set of clusters of $\omega$ in $D^a$, where $f$ stands for free boundary condition. For $\mathcal{C}\in\mathscr{C}(D^a,f,\cdot)$, let $\mu^a_{\mathcal{C}}:=a^{15/8}\sum_{x\in\mathcal{C}}\delta_x$ be the normalized (by $a^{15/8}$) counting measure of $\mathcal{C}$. For two collections, $\mathscr{S}_1$ and $\mathscr{S}_{2}$, of measures on $D$, the distance between $\mathscr{S}_1$ and $\mathscr{S}_{2}$ is defined by
\begin{equation}\label{eq:metric}
d_{\text{meas}}(\mathscr{S}_1, \mathscr{S}_{2}):=\inf\{\epsilon>0: \forall \mu \in \mathscr{S}_1~\exists \nu \in \mathscr{S}_{2} \text{ s.t. }d_P(\mu, \nu)\leq\epsilon \text{ and vice versa}\},
\end{equation}
where $d_P$ is the Prokhorov distance. The following theorem from \cite{CCK17} establishes convergence of normalized counting measures.
\begin{theorem}[Theorem 8.2 in \cite{CCK17}]\label{thmconvcme}
\[\{\mu^a_{\mathcal{C}}:\mathcal{C}\in\mathscr{C}(D^a,f,\cdot)\}\Longrightarrow \{\mu^0_{\mathcal{C}}:\mathcal{C}\in\mathscr{C}(D,f,\cdot)\},\]
where $\Longrightarrow$ denotes convergence in distribution and the right-hand side is a collection of measures obtained from the scaling limit; here the topology of convergence is defined by  $d_{\emph{meas}}$. Moreover, the joint law of the collection of boundaries of critical FK clusters and $\{\mu^a_{\mathcal{C}}:\mathcal{C}\in\mathscr{C}(D^a,f,\cdot)\}$ converges in distribution to the joint law of CLE$_{16/3}$ and $\{\mu^0_{\mathcal{C}}:\mathcal{C}\in\mathscr{C}(D,f,\cdot)\}$.
\end{theorem}

We also need the following results about the measurability of CME with respect to CLE and the mass of limiting counting measures.
\begin{corollary}[Theorem 8.2 and Lemma 4.16 in \cite{CCK17}]\label{cormeas}
$\{\mu^0_{\mathcal{C}}:\mathcal{C}\in\mathscr{C}(D,f,\cdot)\}$ is measurable with respect to CLE$_{16/3}$ in $D$.
\end{corollary}
\begin{corollary}[Remark 8.3 in \cite{CCK17}]\label{corpostmass}
The mass for each $\mu^0_{\mathcal{C}}$ where $\mathcal{C}$ has positive diameter is strictly positive.
\end{corollary}

The next theorem concerns 6-arm events of type $(100100)$ --- see page 4 of \cite{CDCH16} for the precise definition.
\begin{theorem}\label{thm6arm}
The critical exponent for a 6-arm event of type $(100100)$ is strictly larger than $2$.
\end{theorem}
\begin{proof}
We take $a=1$ in the proof. Let $A_{100100}(0,N)$ be the event that there are 6 disjoint arms $\gamma_k$ from $(0,0)$ or $(\pm1/2,\pm1/2)$ to the boundary of $[-N,N]^2$ which are of type $100100$. Let $I=\{I_k:1\leq k\leq 6\}$ be a family of disjoint arcs on the boundary of $[-1,1]^2$ and $A^I_{100100}(0,N)$ be the event that $A_{100100}(0,N)$ occurs and the arms $\gamma_k$, $1\leq k\leq 6$, can be chosen in such a way that each $\gamma_k$ ends on $NI_k$. To prove the theorem, by quasi-multiplicativity (Theorem 1.3 in \cite{CDCH16}) and Corollary 1.4 of \cite{CDCH16}, it is enough to show that for some $\alpha>0$,
\[\mathbb{P}^1(A^I_{100100}(0,N))\leq C_6 N^{-(2+\alpha)}.\]

 Choose a point $\theta_j$ between $I_j$ and $I_{j+1}$ for $j=1$ and $4$. Conditioned on $A^I_{100100}(0,N)$, the paths $\gamma_1$ and $\gamma_2$ (resp., $\gamma_4$ and $\gamma_5$) can be chosen to be adjacent and jointly form an interface between FK-open and closed regions. With this choice, $\gamma_1, \gamma_2$ (resp., $\gamma_4, \gamma_5$) can be determined by an exploration process starting from $\theta_1$ (resp., $\theta_4$). By conditioning on these two exploration paths and noticing that what happens in the remaining part of $[-N,N]^2$ is FK percolation with inherited boundary conditions, one sees that
\[\mathbb{P}^1(A^I_{100100}(0,N))\leq \mathbb{P}^1(A_0(0,N))\mathbb{P}^1(A^{I\setminus I_6}_{10010}(0,N)).\]
Lemma \ref{lem1arm} now implies that $\mathbb{P}^1(A_0(0,N))\leq C_1 N^{-1/8}$, and Corollary 1.5 of \cite{CDCH16} implies
\[\mathbb{P}^1(A^{I\setminus I_6}_{10010}(0,N))\leq C_7 N^{-2}.\]
Thus the 6-arm critical exponent of type $(100100)$ is at least $17/8$.
\end{proof}

\section*{Appendix \upperRomannumeral{2}: upper bound for the mass}\label{secupper}
In this appendix we give a proof of Theorem \ref{thmupper}. The techniques here are quite different than the FK-based technology used for the proof of Theorem \ref{thm1}. As mentioned in the introduction, an FK-based approach is given in \cite{CJN19}.

Points $x$ in $\mathbb{Z}^2$ will be denoted $x=(k,w)$ with $k,w\in\mathbb{Z}$.
\begin{proof}[Proof of Theorem \ref{thmupper}]
Suppose $\tilde{m}>0$ is as in \eqref{equpper1}; then by the results of \cite{LP68}, for any random variables $F$ and $G$ that are finite linear combinations of finite products of $\sigma_{(0,w)}$'s, one has
\begin{equation}\label{equpper3}
\langle F; T^k G\rangle_{1,H}=\text{Cov}(F,T^k G)\leq C_{F,G}\cdot (e^{-\tilde{m}})^k,
\end{equation}
where $T^k$ translates $G$ $k$ units to the right to be a function of the $\sigma_{(k,w)}$'s. Let $\Sigma_j$ (resp., $\Sigma_{\leq j}$ or $\Sigma_{\geq j}$) denote the $\sigma$-field generated by $\{\sigma_{(j,w)}:w\in\mathbb{Z}\}$ (resp., $\{\sigma_{(k,w)}:w\in\mathbb{Z}, k\leq j~(\text{or } k\geq j)\}$). It follows from the spatial Markov property of our nearest-neighbor Ising model on $\mathbb{Z}^2$, that the random process $X_k=(\sigma_{(k,w)}:w\in\mathbb{Z})$ for $k\in\mathbb{Z}$ is a stationary Markow chain. Let $\mathcal{T}$ denote the transition operator (the transfer matrix in statistical physics terminology); then \eqref{equpper3} may be rewritten (using $(\cdot,\cdot)$ to denote the standard inner product in $\mathcal{H}_0:=L^2(\Omega,P^1_{H},\Sigma_0)$ where $\Omega=\{-1,+1\}^{\mathbb{Z}^2}$) as
\begin{equation}\label{equpper3.7}
(F,(\mathcal{T}^k-\mathcal{P}_1)G)=(F,(\mathcal{T}-\mathcal{P}_1)^kG)\leq C_{F,G}\cdot(e^{-\tilde{m}})^k,
\end{equation}
where $\mathcal{P}_1$ is the orthogonal projection on the eigenspace of constant random variables. Note that $\mathcal{P}_1$ is the same as the expectation with respect to $P^1_{H}$.

Now, by reflection positivity for the Ising model (see, e.g., \cite{GJ87} or \cite{Bis09}), it follows that $\mathcal{T}$ and $\mathcal{T}-\mathcal{P}_1$ are positive semidefinite. By \eqref{equpper3.7}, the spectrum of $\mathcal{T}-\mathcal{P}_1$ is contained in some interval $[0,\lambda]$ with $\lambda\leq e^{-\tilde{m}}$. It follows that \eqref{equpper3.7} is valid for $F, G$ \emph{any} random variables in $\mathcal{H}_0$ and that one may replace $C_{F,G}$ in \eqref{equpper3.7} by
\[\|(I-\mathcal{P}_1)F\|\cdot\|(I-\mathcal{P}_1)G\|,\]
where $\|\cdot\|$ denotes the norm in $\mathcal{H}_0$, so that
\begin{equation}\label{equpper3.8}
\|(I-\mathcal{P}_1)F\|^2=(F,F)-(\mathcal{P}_1F,\mathcal{P}_1F)=E(F^2)-[E(F)]^2=\text{Var}(F),
\end{equation}
where $E$ denotes expectation with respect to $P^1_H$.

If $G\in L^2(\Omega,P^1_{H},\Sigma_{\geq k})$ and $F\in L^2(\Omega,P^1_{H},\Sigma_{\leq 0})$, then by the Markov property
\[E(G|\Sigma_{\leq k})=E(G|\Sigma_{k}):=\tilde{G}=T^{k}\tilde{\tilde{G}}\]
for some $\tilde{\tilde{G}}\in\mathcal{H}_0$, while
\[E(F|\Sigma_{\geq 0})=E(F|\Sigma_{0}):=\tilde{F}\in \mathcal{H}_0.\]
Thus
\begin{equation}\label{equpper3.9}
\text{Cov}(F,G)=\text{Cov}(\tilde{F},\tilde{G})=(\tilde{F},(\mathcal{T}^k-\mathcal{P}_1)\tilde{\tilde{G}})\leq \sqrt{\text{Var}(\tilde{F})}\sqrt{\text{Var}(\tilde{\tilde{G}})} e^{-\tilde{m}k}.
\end{equation}
Since $E(\tilde{F})=E(F)$ while $E(\tilde{F}^2)\leq E(F^2)$ and similarly for $\tilde{\tilde{G}}$,
\begin{equation}\label{equpper3.11}
\text{Cov}(F,G)\leq \sqrt{\text{Var}(F)}\sqrt{\text{Var}(G)}e^{-\tilde{m}k}.
\end{equation}

Using \eqref{equpper3.11} with $F$ and $G$ finite linear combinations of Ising spin variables and recalling \eqref{eqfield}, we have
\begin{equation}\label{equpper3.92}
\text{Cov}(\Phi^{1,H}(\hat{f}),\Phi^{1,H}(\hat{g}))\leq S^1_{H}(\hat{f})S^1_{H}(\hat{g})e^{-\tilde{m}\hat{k}}
\end{equation}
where we write $S^1_{H}(\hat{f})=\sqrt{\text{Var}(\Phi^{1,H}(\hat{f}))}$, provided
\[\text{supp}(\hat{f})\subseteq (-\infty,0]\times\mathbb{R},~\text{supp}(\hat{g})\subseteq [\hat{k},\infty)\times\mathbb{R}.\]
Since $\tilde{M}(H)$ was defined as the supremum of $\tilde{m}$ such that \eqref{equpper1} is valid, \eqref{equpper3.92} is valid with $\tilde{m}$ replaced by $\tilde{M}(H)$.

Suppose that for some $B\in (0,\infty)$ there is a sequence $H_i\downarrow 0$ such that $\tilde{M}(H_i)\geq B H_i^{8/15}$ for all (large) $i$. Then we pick some fixed $h>0$ (say, $h=1$ for convenience), and let $a_i:=(H_i/h)^{8/15}$ so that $a_i\downarrow 0$ and $H_i=h a_i^{15/8}$. Re-expressing \eqref{equpper3.92} in terms of $\Phi^{a_i,h}$ (with $H=H_i$ and $\tilde{m}$ replaced by $\tilde{M}(H_i)$) gives for any $f,g$ whose supports are separated in the $1$-direction by Euclidean distance $\text{sep}(f,g)$, the bound
\[\text{Cov}(\Phi^{a_i,h}(f),\Phi^{a_i,h}(g))\leq S^{a_i}_{h}(f) S^{a_i}_{h}(g)e^{-\tilde{M}(H_i)\text{sep}(f,g)/a_i}.\]
Since $\tilde{M}(H_i)\geq B(h a_i^{15/8})^{8/15}=Bh^{8/15}a_i$, this yields
\begin{equation}\label{equpper3.94}
\text{Cov}(\Phi^{a_i,h}(f),\Phi^{a_i,h}(g))\leq S^{a_i}_{h}(f) S^{a_i}_{h}(g)e^{-Bh^{8/15}\text{sep}(f,g)}.
\end{equation}
In the limit $a_i\downarrow 0$, the mean and second moment (and hence variance and $S^{a_i}_{h}$) have a finite limit (for decent test functions $f$ and $g$ --- see \cite{CGN15}) and so the mass gap $M(\Phi^h)$ in the continuum limit would satisfy $M(\Phi^h)\geq B h^{8/15}$.

If $\limsup_{H\downarrow 0}\tilde{M}(H)/H^{8/15}=\infty$, then one could take $B$ arbitrarily large in \eqref{equpper3.94} which would make $M(\Phi^h)=\infty$. But this would contradict Corollary \ref{cor:mass}. So it follows that $\limsup_{H\downarrow 0}\tilde{M}(H)/H^{8/15}\leq C<\infty$.

\end{proof}

\section*{Acknowledgements}
The research of JJ was partially supported by STCSM grant 17YF1413300 and that of CMN by US-NSF grant DMS-1507019. The authors thank Rob van den Berg, Francesco Caravenna, Gesualdo Delfino, Roberto Fernandez, Alberto Gandolfi, Christophe Garban, Barry McCoy, Tom Spencer, Rongfeng Sun and Nikos Zygouras for useful comments and discussions related to this work. The authors benefitted from the hospitality of several units of NYU during their work on this paper: the Courant Institute and CCPP at NYU-New York, NYU-Abu Dhabi, and NYU-Shanghai. The authors also thank an anonymous referee for a careful reading of the originally submitted version of the paper and suggestions for improvements in presentation, which have been implemented in the current version.

\end{document}